\theoremstyle{plain}
\newtheorem{theorem}{Theorem}[section]
\newtheorem{corollary}[theorem]{Corollary}
\newtheorem{proposition}[theorem]{Proposition}
\newtheorem{lemma}[theorem]{Lemma}
{\theoremstyle{remark}

\newtheorem{remark}[theorem]{Remark}}
{\theoremstyle{definition}
\newtheorem{definition}[theorem]{Definition}
\newtheorem{example}[theorem]{Example}}
\newcommand{\rom}{\renewcommand{\labelenumi}{{\rm (\roman{enumi})}}%
\renewcommand{\itemsep}{0pt}}
\newcommand{\N}{\mathbb{N}}
\newcommand{\Z}{\mathbb{Z}}
\newcommand{\C}{\mathbb{C}}
\newcommand{\T}{\mathbb{T}}
\newcommand{\cK}{{\mathcal K}}
\newcommand{\cL}{{\mathcal L}}
\newcommand{\cO}{{\mathcal O}}
\newcommand{\cT}{{\mathcal T}}
\newcommand{\tE}{\widetilde{E}}
\newcommand{\tr}{\tilde{r}}
\newcommand{\F}{{\mathcal F}}
\newcommand{\ip}[2]{\langle\,{#1}\,|\,{#2}\,\rangle}
\newcommand{\s}[3]{{{#1}^{#2}_{\textnormal{#3}}}}
\newcommand{\rs}[1]{{\textnormal{#1}}}
\newcommand{\Ca}{$C^*$-al\-ge\-bra }
\newcommand{\CA}{$C^*$-al\-ge\-bra}
\newcommand{\shom}{$*$-ho\-mo\-mor\-phism }
\newcommand{\shoms}{$*$-ho\-mo\-mor\-phisms }
\DeclareMathOperator{\dom}{dom}
\begin{document}
\title[Topological graphs and SGDSs.]
{Topological graphs and singly generated dynamical systems.}
\author[Takeshi KATSURA]{Takeshi KATSURA}
\address{Department of Mathematics, Keio University,
Yokohama, 223-8522 JAPAN}
\email{katsura@math.keio.ac.jp}
\date{}

\subjclass[2000]{Primary 46L05; Secondary 46L55, 37B99}

\begin{abstract}
In this paper, we introduce the notion of a dual topological graph 
of a given topological graph, and show that it defines a \Ca 
isomorphic to the \Ca of the given one. 
Repeating to take a dual, and taking a projective limit, 
we get a singly generated dynamical system with which the 
associate \Ca is isomorphic to the \Ca of 
the given topological graph. 
This shows that a \Ca of an arbitrary topoloical graph has 
a groupoid model. 
Similar investigation are done for relative topoloical graphs 
and partially defined topoloical graphs which are introduced 
in this paper. 
\end{abstract}

\maketitle

\setcounter{section}{-1}

\section{Introduction}

In \cite{Re2}, 
Renault introduces 
the notion of a singly generated dynamical system (SGDS). 
He associates an \'etale groupoid $G(X,\sigma)$ 
to an SGDS $(X,\sigma)$. 
The \Ca $C^*(X,\sigma)$ of an SGDS $(X,\sigma)$ 
is defined to be the groupoid \Ca $C^*(G(X,\sigma))$ of 
this groupoid $G(X,\sigma)$. 
After that, in \cite{KaI}
the author introduces the notion of a topological graph $E$, 
and associates the \Ca $\cO(E)$ to it. 
In \cite{KaII}, 
the author shows that an SGDS is naturally considered as 
a topological graph, and their \CA s coincide. 
In this paper, we show the converse. 
From a topological graph $E$, 
we can construct an SGDS $(E_\infty^0,\sigma)$ so that 
the \Ca $\cO(E)$ is isomorphic to $C^*(E_\infty^0,\sigma)$ 
(Theorem~\ref{Thm:graph=SGDS}). 
As a corollary, 
the class of C*-algebras of topological graphs coincides with 
the one of SGDSs (Corollary~\ref{Cor:2class}). 
As another corollary, 
every \Ca of a topological graph has a groupoid model. 
This gives another proof of a theorem by Yeend (\cite[Theorem~5.2]{Yee}). 

One of the key ingredients of the constrution is 
the notion of a dual topological graph. 
In \cite{BPRS}, 
Bates, Pask, Raeburn and Szyma\'nski define a dual graph $\hat{E}$ of 
a discrete graph $E$, 
and show that the graph algebra $C^*(\hat{E})$ is isomorphic to $C^*(E)$ 
when $E$ is a row-finite graph with no sinks 
(\cite[Corollary~2.5]{BPRS}). 
We extend their construction to topological graphs 
with slight modification, 
to get a dual graph $E_1$ of a topological graph $E$ 
(Definition~\ref{Def:dual}). 
We show that for arbitrary topological graph $E$, 
$\cO(E_1)$ is isomorphic to $\cO(E)$ (Theorem~\ref{Thm:isom1}). 
Taking a dual topological graph can be repeated many times, 
and we obtain a projective system $((E_k)_{k}, (m_{k,l})_{k,l})$ 
of topological graphs 
such that $E_0$ is a given topological graph $E$, 
and $E_1$ is its dual topological graph. 
Its projective limit $E_\infty$ 
is naturally considered as an SGDS which we are looking for 
(Theorem~\ref{Thm:Einfty=SGDS}). 

In Section~\ref{Sec:groupoid}, 
we compute the groupoid $G(E_\infty^0,\sigma)$, 
in particular its unit space $E_\infty^0$, 
and see that our goupoid here is the same 
as the groupoid $\mathcal{G}_\Lambda$ considered by Yeend in \cite{Yee} 
for the topological $1$-graph $\Lambda$ associated 
with a topological graph $E$. 
In Section~\ref{Sec:Relative}, 
we introduce the notion of a relative topological graph $(E;U)$ 
and its \Ca $\cO(E;U)$. 
Investigation similar to the one for topological graphs 
can be done for relative topological graphs 
to get a groupoid model for the \Ca $\cO(E;U)$ 
of relative topological graphs $(E;U)$. 
By taking $U=\emptyset$, 
we recover \cite[Theorem~5.1]{Yee}. 
In Section~\ref{Sec:partial}, 
we introduce the notion of a partially defined topological graph $E$ 
and its \Ca $\cO(E)$. 
This \Ca naturally arises, for example, in the work of Castro and Kang 
in \cite{CK}. 
We give several examples and show many results 
including a groupoid model of $\cO(E)$.

For a locally compact space $E$, we denote by $C_0(E)$ the \Ca 
of continuos functions on $E$ vanishing at infinity. 
For a locally compact space $E$ and its closed subset $X$, 
we have a surjction $C_0(E) \to C_0(X)$ by restricting functions 
on $E$ to $X$. 
The kernel of this surjection is naturally identified with $C_0(U)$ 
where $U := E\setminus X$ is an open subset of $E$. 
Thus, for an open subset $U$ of a locally compact space $E$ 
(or even for a locally compact space $U$ which is canonically homeomorphic 
to an open subset of $E$), 
we consider $C_0(U)$ as a subalgebra of $C_0(E)$. 

\vspace{0.5cm}
\noindent
{\bfseries Acknowledgments.} 

The author would like to thank his colleague, students, friends 
and family for supporting him until this paper has been finished. 
He is grateful to Gilles G. de~Castro 
for asking him a question. 
Without his question, the author could not finish this paper. 
The content of Section~\ref{Sec:partial} is motivated 
by his question. 
The author would like to thank Trent Yeend and Aidan Sims 
for discussing about groupoids. 
This work was supported by JSPS KAKENHI Grant Number JP18K03345.

\section{Topological graphs}\label{Sec:topgraph}

In this section, we recall the definitions of topological graphs, 
SGDSs, their C*-algebras and their relation. 
For the detail, see \cite{KaI}, \cite{Re2} and \cite{KaII}. 

\begin{definition}
A {\em topological graph} $E=(E^0,E^1,d,r)$ consists of 
two locally compact spaces $E^0$ and $E^1$, 
and two maps $d,r\colon E^1\to E^0$, 
where $d$ is locally homeomorphic 
and $r$ is continuous.
\end{definition}

We regard an element $v$ of $E^0$ as a vertex, 
and an element $e$ of $E^1$ as a directed edge 
pointing from its domain $d(e)\in E^0$
to its range $r(e)\in E^0$. 
By the local homeomorphism $d\colon E^1\to E^0$, 
$E^1$ is ``locally'' isomorphic to $E^0$, 
and the pair $(E^1,d)$ defines a ``domain'' of a continuous map $r$
which is ``locally'' a continuous map from $E^0$ to $E^0$. 

Take a topological graph $E=(E^0,E^1,d,r)$. 
We recall the definition of the C*-algebra $\cO(E)$. 
For the detail, consult \cite{KaI}. 
For $\xi\in C(E^1)$, 
we define a map $\ip{\xi}{\xi}\colon E^0\to [0,\infty]$ by 
$\ip{\xi}{\xi}(v)=\sum_{e\in d^{-1}(v)}|\xi(e)|^2$ for $v\in E^0$.
Then 
\[
C_d(E^1):=\{\xi\in C(E^1)\mid \ip{\xi}{\xi}\in C_0(E^0)\}.
\]
becomes a (right) Hilbert $C_0(E^0)$-module. 
With a left action $\pi_r\colon C_0(E^0)\to\cL(C_d(E^1))$ 
defined by $(\pi_r(f)\xi)(e)=f(r(e))\xi(e)$ 
for $e\in E^1$, $f\in C_0(E^0)$ and $\xi\in C_d(E^1)$, 
$C_d(E^1)$ becomes a C*-correspondence over $C_0(E^0)$. 
The C*-algebra associated with this C*-correspondence $C_d(E^1)$ 
in the sense of \cite{KaPim} 
is the C*-algebra $\cO(E)$. 
In \cite{KaI}, $\cO(E)$ is defined to be the universal \Ca 
generated by a Cuntz-Krieger $E$-pair $(t^0,t^1)$. 
To define a Cuntz-Krieger $E$-pair, it is important to compute 
the largest ideal of $C_0(E^0)$ on which the left action $\pi_r$ 
is injective into $\cK(C_d(E^1))$. 
This ideal is $C_0(\s{E}{0}{rg})$ where 
\begin{align*}
\s{E}{0}{fin}:=\{v\in E^0\mid\mbox{ there exists}&\mbox{ a neighborhood } 
V \mbox{ of } v\\
&\mbox{ such that }
r^{-1}(V)\subset E^1 \mbox{ is compact}\},
\end{align*}
$\s{E}{0}{sce}:=E^0\setminus\overline{r(E^1)}$ 
and $\s{E}{0}{rg} :=\s{E}{0}{fin}\setminus\overline{\s{E}{0}{sce}}$. 
We set $\s{E}{0}{sg}=E^0\setminus \s{E}{0}{rg}$. 
We have $\s{E}{0}{sg}=\s{E}{0}{inf}\cup\overline{\s{E}{0}{sce}}$ 
where $\s{E}{0}{inf}=E^0\setminus \s{E}{0}{fin}$. 
A vertex in $\s{E}{0}{rg}$ is said to be regular, 
and a vertex in $\s{E}{0}{sg}$ is said to be singular. 
We define subsets $\s{E}{1}{rg}$ and $\s{E}{1}{sg}$ of $E^1$ 
by $\s{E}{1}{rg} = d^{-1}(\s{E}{0}{rg})$ 
and $\s{E}{1}{sg} = d^{-1}(\s{E}{0}{sg})$. 
See \cite{KaI} for detail. 

In \cite{Re2}, Renault introduces the following notion. 

\begin{definition}[{\cite[Definition~2.3]{Re2}, see also \cite[Subsection~10.3]{KaII}}]
A {\em singly generated dynamical system} (SGDS) is 
a pair $(X,\sigma)$ where $X$ is a locally compact topological space 
and $\sigma$ is a local homeomorphism from an open subset 
$\dom(\sigma)$ of $X$ onto an open subset $\mbox{ran}(\sigma)$ of $X$. 
\end{definition}

From an SGDS $(X,\sigma)$, 
we have a topological graph $E=(E^0,E^1,d,r)$ 
by setting $E^0=X$, $E^1=\dom(\sigma)$, $d=\sigma$, 
and $r$ is a natural embedding. 
Since $r$ is a natural embedding, 
we have $\s{E}{0}{rg}=\dom(\sigma)$. 

\begin{lemma}
A topological graph $E=(E^0,E^1,d,r)$ 
is given from an SGDS as above 
if and only if $r \colon E^1 \to E^0$ is 
a homeomorphism to an open subset of $E^0$. 
\end{lemma}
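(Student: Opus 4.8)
The plan is to prove the two implications separately; both are essentially formal, and the only non-routine ingredient is the standard fact that a local homeomorphism is an open map.

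For the forward direction, suppose $E=(E^0,E^1,d,r)$ is obtained from an SGDS $(X,\sigma)$ by the recipe described above, so that $E^0=X$, $E^1=\dom(\sigma)$, $d=\sigma$, and $r$ is the inclusion map $\dom(\sigma)\hookrightarrow X$. Since $\dom(\sigma)$ is, by the definition of an SGDS, an open subset of $X$, the map $r$ is a homeomorphism of $E^1$ onto the open subset $\dom(\sigma)$ of $E^0$, which is exactly the asserted property. (If one prefers to read ``given from an SGDS as above'' up to isomorphism of topological graphs, this property is transported along the isomorphism without difficulty, since being a homeomorphism onto an open set is preserved by the pair of homeomorphisms defining such an isomorphism.)

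For the converse, assume $r\colon E^1\to E^0$ is a homeomorphism onto an open subset $U:=r(E^1)$ of $E^0$. I would set $X:=E^0$ and define $\sigma:=d\circ r^{-1}\colon U\to E^0$, where $r^{-1}\colon U\to E^1$ denotes the inverse homeomorphism, and then check that $(X,\sigma)$ is an SGDS with $\dom(\sigma)=U$. Indeed, $U$ is open in $X$ by hypothesis; $\sigma$ is a local homeomorphism, being the composite of the homeomorphism $r^{-1}$ with the local homeomorphism $d$; and $\mathrm{ran}(\sigma)=\sigma(U)$ is open in $X$ because every local homeomorphism is an open map. Hence $(X,\sigma)$ satisfies all the requirements in the definition of an SGDS.

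Finally, I would observe that the topological graph associated with $(X,\sigma)$ by the construction above has vertex space $E^0$, edge space $U$, domain map $\sigma$, and range map the inclusion $U\hookrightarrow E^0$, and that it is isomorphic to $E$ via the pair of homeomorphisms $\id_{E^0}$ on vertices and $r\colon E^1\to U$ on edges: this pair intertwines $d$ with $\sigma$ (since $\sigma\circ r=d$) and $r$ with the inclusion (since the inclusion composed with $r$ is $r$ itself). Equivalently, identifying $E^1$ with the open subset $r(E^1)$ of $E^0$ via $r$ in the spirit of the convention recalled in the introduction, $E$ \emph{is} of the required form. I do not expect a genuine obstacle anywhere in this argument: the only point requiring care is that $\mathrm{ran}(\sigma)$ must be open, and this is handled once and for all by openness of local homeomorphisms; everything else is unwinding of definitions.
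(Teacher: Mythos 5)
Your argument is correct and follows essentially the same route as the paper: the forward direction is immediate since $r$ is the inclusion of the open set $\dom(\sigma)$, and for the converse you set $X:=E^0$, $\dom(\sigma):=r(E^1)$, $\sigma:=d\circ r^{-1}$ and check the associated graph is isomorphic to $E$, exactly as the paper does. Your extra care in verifying that $\mathrm{ran}(\sigma)$ is open (via openness of local homeomorphisms) is a detail the paper leaves implicit, but it is the same proof.
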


\begin{proof}
It is clear that the condition is necessary. 
Assume that $r \colon E^1 \to E^0$ is 
a homeomorphism to an open subset of $E^0$. 
We set $X := E^0$ and 
define a local homeomorphism $\sigma$ 
from an open subset $\dom(\sigma) := r(E^1)$ of $X$ to $X$ 
by $\sigma = d\circ r^{-1}$. 
Then we get an SGDS $(X,\sigma)$ 
with which the associated topological graph
is isomorphic to $E$. 
\end{proof}

As one can see the lemma above, 
an SGDS $(X,\sigma)$ is recovered from 
the topological graph $E$ associated with it. 
Thus we can say that a topological graph $E$ is 
an SGDS if $r$ is a homeomorphism to an open subset of $E^0$. 
In Section~\ref{Sec:infinite}, 
we construct a topological graph $E_\infty$ 
from a given topological graph $E$, 
and show that $E_\infty$ is an SGDS. 

\begin{proposition}[{\cite[Proposition~10.9]{KaII}}]\label{Prop:SGDS=graph}
For an SGDS $(X,\sigma)$, 
the $C^*$-algebra $C^*(X,\sigma)$ is naturally isomorphic to 
$\cO(E)$ for the topological graph $E=(X,\dom(\sigma),\sigma,r)$ 
associated with $(X,\sigma)$ where $r$ is the embedding. 
\end{proposition}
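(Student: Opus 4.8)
The plan is to realise $\cO(E)$ concretely inside the groupoid \Ca $C^*(X,\sigma)=C^*(G(X,\sigma))$ by exhibiting a Cuntz--Krieger $E$-pair there, and then to prove the resulting \shom is an isomorphism by means of the gauge-invariant uniqueness theorem for topological graphs. First I would recall the explicit description of Renault's groupoid: $G(X,\sigma)$ consists of triples $(x,k,y)$ with $k=m-n$ for some $m,n\ge 0$ such that $x\in\dom(\sigma^m)$, $y\in\dom(\sigma^n)$ and $\sigma^m(x)=\sigma^n(y)$, topologised by the basic open bisections coming from open sets on which $\sigma^m$ and $\sigma^n$ are injective; its unit space is $X$, so $C_0(E^0)=C_0(X)=C_0(G(X,\sigma)^{(0)})$ embeds in $C^*(X,\sigma)$ (nondegenerately, with an obvious extension to the multiplier algebra). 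Set $t^0$ to be this embedding. For $t^1$, observe that $W:=\{(e,1,\sigma(e))\mid e\in\dom(\sigma)\}$ is an open subset of $G(X,\sigma)$ on which the range map restricts to a homeomorphism onto $\dom(\sigma)=E^1$ and the source map restricts to the local homeomorphism $\sigma$; hence $W$ is a union of open bisections. Identifying $E^1$ with $W$ via $r|_W$ gives an inclusion $C_c(E^1)\hookrightarrow C_c(W)\subseteq C^*(X,\sigma)$, and a convolution computation shows that for $\xi,\eta\in C_c(E^1)$ the product $\xi^**\eta$ is the function $v\mapsto\sum_{e\in d^{-1}(v)}\overline{\xi(e)}\eta(e)$ on $G(X,\sigma)^{(0)}=X$, i.e.\ exactly $t^0(\ip{\xi}{\eta})$. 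In particular this map is isometric for the $C_d(E^1)$-norm, so (by density of $C_c(E^1)$ in $C_d(E^1)$, \cite{KaI}) it extends to a linear map $t^1\colon C_d(E^1)\to C^*(X,\sigma)$.

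Next I would check that $(t^0,t^1)$ is a Cuntz--Krieger $E$-pair. The identities $t^1(\xi)^*t^1(\eta)=t^0(\ip{\xi}{\eta})$ and $t^0(f)t^1(\xi)=t^1(\pi_r(f)\xi)$ follow from the same convolution bookkeeping on $W$ and on $G(X,\sigma)^{(0)}$. The delicate point is the Cuntz--Krieger equality $t^0(f)=\psi^1_t(\pi_r(f))$ for $f\in C_0(\s{E}{0}{rg})$, where $\psi^1_t\colon\cK(C_d(E^1))\to C^*(X,\sigma)$ is the \shom induced by $t^1$; here one uses that $\s{E}{0}{rg}=\dom(\sigma)$ because $r$ is an embedding, writes $\pi_r(f)$ as a limit of finite sums $\sum_i\theta_{\xi_i,\eta_i}$ whose ``kernels'' $\sum_i\xi_i(e)\overline{\xi_i(e')}$ are the diagonal function $f(e)\,\delta_{e,e'}$ on the pairs with $\sigma(e)=\sigma(e')$, and observes that $\psi^1_t$ then carries this to the function supported on $\{(e,0,e)\mid e\in\dom(\sigma)\}\subseteq G(X,\sigma)^{(0)}$ with value $f(e)$, which is precisely $t^0(f)$. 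By the universal property of $\cO(E)$ (as the Cuntz--Pimsner algebra of $C_d(E^1)$, \cite{KaI,KaPim}) this yields a \shom $\rho\colon\cO(E)\to C^*(X,\sigma)$ sending the canonical generators to $(t^0,t^1)$.

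It remains to show $\rho$ is an isomorphism. Surjectivity is easy: products $t^1(\xi_1)\cdots t^1(\xi_m)\,t^1(\eta_1)^*\cdots t^1(\eta_n)^*$ run over functions supported on the basic bisections of $G(X,\sigma)$, and these span a dense subalgebra of $C^*(X,\sigma)$. For injectivity I would invoke the gauge-invariant uniqueness theorem of \cite{KaI}: the $\Z$-valued cocycle $(x,k,y)\mapsto k$ on $G(X,\sigma)$ induces a point-norm continuous action of $\T$ on $C^*(X,\sigma)$ which $\rho$ intertwines with the canonical gauge action on $\cO(E)$, and $t^0$ is injective on $C_0(E^0)$ since $C_0(G(X,\sigma)^{(0)})$ embeds faithfully in the groupoid \Ca. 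The uniqueness theorem then forces $\rho$ to be injective, hence an isomorphism; unwinding the construction shows it is independent of the auxiliary choices and compatible with the identifications of \cite{Re2} and \cite{KaII}, giving the asserted natural isomorphism.

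The step I expect to be the main obstacle is making the construction of $t^1$ and, above all, the verification of the Cuntz--Krieger relation $t^0(f)=\psi^1_t(\pi_r(f))$ fully rigorous when $\sigma$ is genuinely non-injective, when $X$ is non-compact, and when $G(X,\sigma)$ fails to be Hausdorff: these force care about the precise meaning of ``the function on $W$'', about which convolution identities survive on the relevant (non-Hausdorff) groupoid, about passing to the multiplier algebra in the non-unital case, and about invoking the correct non-Hausdorff forms of both the faithfulness of $C_0(G^{(0)})\hookrightarrow C^*(G)$ and the gauge-invariant uniqueness theorem.
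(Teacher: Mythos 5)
Your proposal is essentially the paper's route, except that the paper itself offers no argument here: the proposition is imported verbatim from \cite[Proposition~10.9]{KaII}, and your sketch reconstructs the standard proof of that cited result --- exhibit a Cuntz--Krieger $E$-pair inside $C^*(G(X,\sigma))$ using $C_0(X)=C_0(G(X,\sigma)^{(0)})$ and the open bisection $\{(e,1,\sigma(e))\mid e\in\dom(\sigma)\}$, then get injectivity from the gauge action induced by the canonical $\Z$-valued cocycle together with faithfulness of $C_0(G^{(0)})\hookrightarrow C^*(G)$, i.e.\ the gauge-invariant uniqueness theorem. The outline is correct; the one superfluous worry is non-Hausdorffness, since the Deaconu--Renault groupoid $G(X,\sigma)$ of an SGDS is automatically Hausdorff (the integer coordinate is locally constant on the basic bisections $Z(U,m,n,V)$ and $X$ is Hausdorff), while the surjectivity step does require the routine but necessary verification that $C_c$ of each basic bisection $Z(U,m,n,V)$ is realized by products of the form $t^1(\xi_1)\cdots t^1(\xi_m)\,t^0(f)\,t^1(\eta_n)^*\cdots t^1(\eta_1)^*$, not just that such products land in bisections.
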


From this Proposition, the class of C*-algebras of SGDSs is 
contained in the one of topological graphs. 
In Section~\ref{Sec:infinite}, 
by showing $\cO(E_\infty) \cong \cO(E)$, 
we show that in fact these two classes coincide.

\section{Dual graphs}\label{Sec:dual}

In \cite{BPRS}, 
Bates, Pask, Raeburn and Szyma\'nski define a dual graph $\hat{E}$ of 
a discrete graph $E$, 
and show that the graph algebra $C^*(\hat{E})$ is isomorphic to $C^*(E)$ 
when $E$ is a row-finite graph with no sinks 
(\cite[Corollary~2.5]{BPRS}). 
In this section, 
we will define a dual graph $E_1$ of 
an arbitrary topological graph $E$, 
and show that $\cO(E_1)$ is isomorphic to $\cO(E)$. 
This graph $E_1$ coincides with the one $\hat{E}$ in \cite{BPRS} 
when $E$ is a row-finite discrete graph with no source 
(no sink in the convention of \cite{BPRS}). 
For a similar construction, see \cite[Section~2]{Br}. 

For a locally compact space $X$, 
the one-point compactification of $X$ is denoted 
by $\widetilde{X}=X\cup\{\infty\}$. 
Note that we use the same symbol $\infty$ 
for one-point compactifications of different spaces. 
This should cause no confusion. 
Note also that even if $X$ is compact, 
we define $\widetilde{X}=X\cup\{\infty\}$ 
and call it the one-point compactification of $X$. 
Thus $\widetilde{X}$ is a compact space 
containing $X$ as an open (not necessarily dense) subset, 
with a special point $\infty$ 
such that $\widetilde{X}=X\cup\{\infty\}$. 

Let $E=(E^0,E^1,d,r)$ be a topological graph. 
We define a topological graph $E_1=(E_1^0,E_1^1,d_1,r_1)$ 
as follows. 

\begin{definition}
We define two subsets $E_1^0 \subset E^0\times \tE^1$ 
and $E_1^1 \subset E^1 \times \tE^1$ by 
\begin{align*}
E_1^0 
&:=\bigl\{(v\,,e)\in E^0\times \tE^1\ \big|\ 
\text{$v=r(e)$ if $e\in E^1$, 
$v\in E^0\setminus\s{E}{0}{rg}$ if $e=\infty$}\bigr\},\\
E_1^1 
&:=\bigl\{(e',e)\in E^1 \times \tE^1\ \big|\ 
\text{$d(e')=r(e)$ if $e\in E^1$, 
$e'\in E^1\setminus\s{E}{1}{rg}$ if $e=\infty$}\bigr\}.
\end{align*}
\end{definition}

Two spaces $E^0\times \tE^1$ and $E^1 \times \tE^1$ 
are open subsets of $\tE^0\times \tE^1$ and $\tE^1 \times \tE^1$, 
respectively. 
We define $\tE_1^0 \subset \tE^0\times \tE^1$ 
and $\tE_1^1 \subset \tE^1 \times \tE^1$ by 
$\tE_1^0 = E_1^0 \cup \{(\infty,\infty)\}$ and 
$\tE_1^1 = E_1^1 \cup \{(\infty,\infty)\}$. 
Then we have the following 

\begin{proposition}\label{Prop:E01cpt}
Two subsets $\tE_1^0 \subset \tE^0\times \tE^1$ 
and $\tE_1^1 \subset \tE^1 \times \tE^1$ satisfy 
\begin{align*}
\tE_1^0 
&=\bigl\{(v\,,e)\in \tE^0\times \tE^1\ \big|\ 
\text{$v=r(e)$ if $e\in E^1$, 
$v\in\tE^0\setminus\s{E}{0}{rg}$ if $e=\infty$}\bigr\},\\
\tE_1^1 
&=\bigl\{(e',e)\in \tE^1 \times \tE^1\ \big|\ 
\text{$d(e')=r(e)$ if $e\in E^1$, 
$e'\in\tE^1\setminus\s{E}{1}{rg}$ if $e=\infty$}\bigr\},
\end{align*}
and are compact. 
Therefore $E_1^0$ and $E_1^1$ are locally compact, 
and their one-point compactifications 
can be naturally identified with $\tE_1^0$ and $\tE_1^1$, 
respectively. 
\end{proposition}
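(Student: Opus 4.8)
The plan is to deduce everything from compactness of the ambient products $\tE^0\times\tE^1$ and $\tE^1\times\tE^1$ — finite products of compact Hausdorff spaces — by showing that $\tE_1^0$ and $\tE_1^1$ are closed subsets of them, establishing along the way that they coincide with the sets displayed in the statement.

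First I would check the two set-theoretic identities. Since $\s{E}{0}{rg}\subseteq E^0$ and $\s{E}{1}{rg}\subseteq E^1$, we have $\tE^0\setminus\s{E}{0}{rg}=(E^0\setminus\s{E}{0}{rg})\cup\{\infty\}$ and $\tE^1\setminus\s{E}{1}{rg}=(E^1\setminus\s{E}{1}{rg})\cup\{\infty\}$. Splitting the right-hand side of each displayed formula according to whether the first coordinate lies in $E^0$ (resp.\ $E^1$) or equals $\infty$: in the first case the defining conditions are exactly those of $E_1^0$ (resp.\ $E_1^1$); in the second case the equations $v=r(e)$ and $d(e')=r(e)$ are meaningful only for $e\in E^1$, so a first coordinate $\infty$ is forced to pair with $e=\infty$, contributing only the point $(\infty,\infty)$. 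Hence the two displayed sets are $E_1^0\cup\{(\infty,\infty)\}$ and $E_1^1\cup\{(\infty,\infty)\}$, i.e.\ $\tE_1^0$ and $\tE_1^1$.

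For compactness I would prove closedness by showing the complement is open: take a point outside the set and produce a basic product neighbourhood disjoint from it, running through the natural cases. For $\tE_1^0$ and $(v,e)\notin\tE_1^0$: if $e\in E^1$ with $v\neq r(e)$, separate $v$ from $r(e)$ and pull the latter neighbourhood back by the continuous map $r$; if $v=\infty$ and $e\in E^1$, use continuity of $r$ to confine $r$ of a small neighbourhood of $e$ to a compact set; if $e=\infty$ and $v\in\s{E}{0}{rg}$, use that $\s{E}{0}{rg}=\s{E}{0}{fin}\cap\bigl(E^0\setminus\overline{\s{E}{0}{sce}}\bigr)$ is open in $E^0$ — $\s{E}{0}{fin}$ being open and $\overline{\s{E}{0}{sce}}$ closed — hence open in $\tE^0$, and shrink to an open $V$ with $v\in V\subseteq\s{E}{0}{rg}$ and $r^{-1}(V)$ compact, so that $V\times\bigl(\{\infty\}\cup(E^1\setminus r^{-1}(V))\bigr)$ is a neighbourhood of $(v,\infty)$ disjoint from $\tE_1^0$. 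The argument for $\tE_1^1$ is identical with $d(e')$ in place of $v$ and $\s{E}{1}{rg}=d^{-1}(\s{E}{0}{rg})$ — open since $d$ is continuous — in place of $\s{E}{0}{rg}$, again using the defining local-finiteness of $\s{E}{0}{fin}$ to keep the relevant preimages compact.

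Granting closedness, $\tE_1^0$ and $\tE_1^1$ are compact Hausdorff; deleting the single point $(\infty,\infty)$ leaves the open, hence locally compact Hausdorff, subsets $E_1^0$ and $E_1^1$, and a compact Hausdorff space obtained from a locally compact space by adjoining exactly one point is canonically that space's one-point compactification. The step I expect to cost the real work is the family of $\infty$-cases in the closedness argument: continuity of $d$ and $r$ does not suffice there, and one must combine the openness of $\s{E}{0}{rg}$ and $\s{E}{1}{rg}$, the local finiteness encoded in $\s{E}{0}{fin}$, and the regular/singular dichotomy to exclude a net that escapes to infinity in one coordinate while staying bounded in the other.
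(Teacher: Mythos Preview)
Your approach is essentially the paper's: show $\tE_1^0$ (and analogously $\tE_1^1$) is closed in the compact product by exhibiting, for each point of the complement, a product neighbourhood missing $\tE_1^0$, splitting on whether $e\in E^1$ or $e=\infty$. Your extra subcase $v=\infty$, $e\in E^1$ is subsumed in the paper under $v\neq r(e)$, since $\tE^0$ is Hausdorff.

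One technical slip in the $e=\infty$ case: you ask for an \emph{open} $V$ with $v\in V\subseteq\s{E}{0}{rg}$ and $r^{-1}(V)$ compact, but such a $V$ need not exist---take $E^0=E^1=\R$ with $d=r=\id$; then $\s{E}{0}{rg}=\R$, yet no nonempty open $V\subseteq\R$ has $r^{-1}(V)=V$ compact. The paper instead chooses a \emph{compact} neighbourhood $V\subseteq\s{E}{0}{rg}$: covering $V$ by finitely many of the neighbourhoods supplied by the definition of $\s{E}{0}{fin}$ shows $r^{-1}(V)$ is compact, and then $V\times(\tE^1\setminus r^{-1}(V))$ is a (not necessarily open) neighbourhood of $(v,\infty)$ contained in the complement. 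Equivalently, take your open $V$ to have compact closure $\overline{V}\subseteq\s{E}{0}{rg}$ and use $V\times(\tE^1\setminus r^{-1}(\overline{V}))$.
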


\begin{proof}
We only show that $\tE_1^0$ is compact, 
because one can show that $\tE_1^1$ is compact
in a very similar way. 
Since $\tE^0\times \tE^1$ is compact, 
it suffices to see that $\tE_1^0$ is closed in $\tE^0\times \tE^1$. 
We set $W:=(\tE^0\times \tE^1)\setminus \tE_1^0$, 
and we will show that $W$ is open in $\tE^0\times \tE^1$. 
Take $(v,e)\in W$. 
We are going to find an open subset $W_0$ of $\tE^0\times \tE^1$ 
such that $(v,e)\in W_0\subset W$. 
First consider the case that $e\in E^1$. 
Then $v\neq r(e)$. 
Take open sets $V_1,V_2\subset \tE^0$ such that 
$v\in V_1$, $r(e)\in V_2\subset E^0$, and $V_1\cap V_2=\emptyset$. 
Set $W_0=V_1\times r^{-1}(V_2)$. 
Then $W_0$ is an open set such that $(v,e)\in W_0\subset W$. 
Next consider the case that $e=\infty$. 
We have $v\in\s{E}{0}{rg}$. 
Let $V$ be a compact neighborhood of $v$
with $V\subset \s{E}{0}{rg}$. 
Set $W_0=V\times (\tE^1 \setminus r^{-1}(V))\subset \tE^0\times \tE^1$. 
Since $r^{-1}(V)\subset E^1$ is compact, 
$W_0$ is a neighborhood of $(v,\infty)$.
Since $V\subset \s{E}{0}{rg}$, 
we have $W_0 \subset W$. 
Hence $W$ is an open set. 
This completes the proof. 
\end{proof}

We define two continuoius maps $d_1, r_1\colon E_1^1\to E_1^0$ 
by $d_1((e',e))=(d(e'),e)$ and $r_1((e',e))=(r(e'),e')$ 
for $(e',e)\in E_1^1$. 
These two maps are well-defined and $d_1$ is locally homeomorphic 
because $d$ is locally homeomprphic and 
\[
E_1^1 
=\bigl\{(e',e)\in E^1\times \tE^1\ \big| \ 
(d(e'),e)\in E_1^0\bigr\}. 
\]
Thus we get a topological graph $E_1=(E_1^0,E_1^1,d_1,r_1)$. 

\begin{definition}\label{Def:dual}
The topological graph $E_1=(E_1^0,E_1^1,d_1,r_1)$ is called 
the {\em dual} topological graph of $E$. 
\end{definition}

\begin{lemma}\label{Lem:E1rg}
We have 
$(E_1^0)_{\rs{rg}} = \bigl\{(r(e),e)\in E_1^0 \ \big|\ e\in E^1\bigr\}$
and 
$(E_1^0)_{\rs{sg}} = 
\big\{(v,\infty) \in E_1^0 \ \big|\ v \in \s{E}{0}{sg}\big\}$. 
\end{lemma}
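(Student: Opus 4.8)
The plan is to split $E_1^0$ into two pieces, determine $(E_1^0)_{\rs{fin}}$ and $(E_1^0)_{\rs{sce}}$ on each, and reassemble. Put $A:=\{(r(e),e)\mid e\in E^1\}=E_1^0\cap(\tE^0\times E^1)$ and $B:=E_1^0\setminus A=E_1^0\cap(\tE^0\times\{\infty\})=\{(v,\infty)\mid v\in\s{E}{0}{sg}\}$. Then $A$ is open in $E_1^0$ and the second coordinate projection is a homeomorphism $A\cong E^1$, while $B$ is closed and the first coordinate projection is a homeomorphism $B\cong\s{E}{0}{sg}$. Since $(E_1^0)_{\rs{rg}}$ and $(E_1^0)_{\rs{sg}}$ partition $E_1^0=A\sqcup B$, it suffices to show $(E_1^0)_{\rs{rg}}=A$.

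First I would identify $\overline{r_1(E_1^1)}$ (closures taken in $E_1^0$). One checks that $(e',e)\in E_1^1$ for some $e\in\tE^1$ exactly when $e'\in G:=\s{E}{1}{sg}\cup d^{-1}(r(E^1))$, so $r_1(E_1^1)=\{(r(e'),e')\mid e'\in G\}$; and $G$ is dense in $E^1$, for its complement $d^{-1}(\s{E}{0}{rg}\setminus r(E^1))$ can have no nonempty open subset $U$ (else $d(U)$ would be a nonempty open subset of $\s{E}{0}{rg}\subseteq\overline{r(E^1)}$ disjoint from $r(E^1)$, impossible, $d$ being an open map). Via $A\cong E^1$ this makes $r_1(E_1^1)$ dense in $A$, hence $\overline{r_1(E_1^1)}=\overline A$; in particular $(E_1^0)_{\rs{sce}}=E_1^0\setminus\overline A\subseteq B$, so $\overline{(E_1^0)_{\rs{sce}}}\subseteq B$. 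I would also record the elementary fact that, for $v\in\s{E}{0}{sg}$, $(v,\infty)\in\overline A$ if and only if $v\notin\s{E}{0}{fin}$: if $v\in\s{E}{0}{fin}$, a net $(r(e_\alpha),e_\alpha)$ in $A$ tending to $(v,\infty)$ would force $e_\alpha\to\infty$ in $\tE^1$ while eventually $e_\alpha\in r^{-1}(V)$ for a neighborhood $V$ of $v$ with $r^{-1}(V)$ compact, which is absurd; conversely, if $v\notin\s{E}{0}{fin}$ then $r^{-1}(V)$ is not contained in $K$ for any neighborhood $V$ of $v$ and any compact $K\subseteq E^1$ (pass to a compact subneighborhood), and this is precisely what is needed for $(v,\infty)$ to lie in $\overline A$. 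It follows that $(E_1^0)_{\rs{sce}}\cap B=\{(v,\infty)\mid v\in\s{E}{0}{sg}\cap\s{E}{0}{fin}\}$.

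The main computation is $(E_1^0)_{\rs{fin}}=A\cup\{(v,\infty)\mid v\in\s{E}{0}{sg}\cap\s{E}{0}{fin}\}$. For ``$\supseteq$'': for $(r(e_0),e_0)\in A$, a compact neighborhood $N$ of $e_0$ in $E^1$ gives the neighborhood $E_1^0\cap(\tE^0\times N)$ of $(r(e_0),e_0)$ with $r_1$-preimage $\{(e',e)\in E_1^1\mid e'\in N\}=\tE_1^1\cap(N\times\tE^1)$, a closed subset of the compact space $\tE_1^1$ (Proposition~\ref{Prop:E01cpt}), hence compact; for $(v_0,\infty)$ with $v_0\in\s{E}{0}{sg}\cap\s{E}{0}{fin}$, a compact neighborhood $V$ of $v_0$ with $r^{-1}(V)$ compact gives the neighborhood $E_1^0\cap(V\times\tE^1)$ with $r_1$-preimage $\tE_1^1\cap(r^{-1}(V)\times\tE^1)$, again compact. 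The reverse inclusion is the crux: if $v_0\in\s{E}{0}{inf}$ then by the recorded fact $(v_0,\infty)\in\overline A=\overline{r_1(E_1^1)}$, so some net $r_1((f_\alpha,g_\alpha))=(r(f_\alpha),f_\alpha)$ with $(f_\alpha,g_\alpha)\in E_1^1$ tends to $(v_0,\infty)$; the second coordinate gives $f_\alpha\to\infty$ in $\tE^1$. For every neighborhood $\mathcal V$ of $(v_0,\infty)$ we then have $(f_\alpha,g_\alpha)\in r_1^{-1}(\mathcal V)$ eventually, yet a net in $E_1^1\subseteq E^1\times\tE^1$ whose first coordinate escapes to $\infty$ in $\tE^1$ has no subnet convergent in $E_1^1$; hence $r_1^{-1}(\mathcal V)$ is not compact and $(v_0,\infty)\notin(E_1^0)_{\rs{fin}}$.

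To finish, combine: $(E_1^0)_{\rs{rg}}=(E_1^0)_{\rs{fin}}\setminus\overline{(E_1^0)_{\rs{sce}}}$. The piece $A$ of $(E_1^0)_{\rs{fin}}$ survives since $\overline{(E_1^0)_{\rs{sce}}}\subseteq B$ is disjoint from $A$; the piece $\{(v,\infty)\mid v\in\s{E}{0}{sg}\cap\s{E}{0}{fin}\}=(E_1^0)_{\rs{fin}}\cap B$ is deleted since it equals $(E_1^0)_{\rs{sce}}\cap B\subseteq\overline{(E_1^0)_{\rs{sce}}}$. Thus $(E_1^0)_{\rs{rg}}=A=\{(r(e),e)\mid e\in E^1\}$ and $(E_1^0)_{\rs{sg}}=E_1^0\setminus A=\{(v,\infty)\mid v\in\s{E}{0}{sg}\}$. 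The main obstacle I anticipate is the reverse inclusion in the previous paragraph, i.e.\ $(E_1^0)_{\rs{fin}}\cap B\subseteq\{(v,\infty)\mid v\in\s{E}{0}{fin}\}$: it relies on the density of $G$ (so the approximating net can be chosen in $r_1(E_1^1)$ and therefore lifted to $E_1^1$) together with the observation that in $E_1^1$ the first coordinate ranges over $E^1$, not $\tE^1$, so it cannot converge once pushed out to $\infty$. The neighborhood-and-preimage identities in the ``$\supseteq$'' parts are routine given Proposition~\ref{Prop:E01cpt}, and all remaining verifications are elementary point-set topology.
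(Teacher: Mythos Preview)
Your argument is correct. You split $E_1^0=A\sqcup B$, compute $(E_1^0)_{\rs{fin}}$ and $(E_1^0)_{\rs{sce}}$ on each piece, and then reassemble; the density of $G=\s{E}{1}{sg}\cup d^{-1}(r(E^1))$ in $E^1$ is the technical hinge, and your justification of it (via $\s{E}{0}{rg}\subseteq\overline{r(E^1)}$ and openness of $d$) is sound. The ``crux'' step---that $(v_0,\infty)\notin(E_1^0)_{\rs{fin}}$ when $v_0\in\s{E}{0}{inf}$---is handled correctly: once density gives you an approximating net inside $r_1(E_1^1)$, the first coordinate in $E_1^1\subset E^1\times\tE^1$ is forced out to $\infty$ and no subnet can converge in $E_1^1$. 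The final bookkeeping, identifying $(E_1^0)_{\rs{fin}}\cap B$ with $(E_1^0)_{\rs{sce}}\cap B$ so that this piece is removed when you pass to $(E_1^0)_{\rs{rg}}$, is also fine.

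The paper takes a different and shorter route. It factors $r_1=\iota\circ m$ with $m\colon E_1^1\to E^1$, $(e',e)\mapsto e'$, and $\iota\colon E^1\to E_1^0$, $e'\mapsto(r(e'),e')$. Then $m$ is proper (it extends to the compactifications) and surjective, and $\iota$ is a homeomorphism onto the open set $A$; hence $r_1$ is a proper surjection onto $A$, and the general principle ``proper surjection onto an open subset $U$ forces $\s{E}{0}{rg}=U$'' finishes the proof in one stroke. Compared to yours, the paper's argument avoids computing $(E_1^0)_{\rs{fin}}$ and $(E_1^0)_{\rs{sce}}$ separately and never needs the density of $G$; on the other hand, your computation yields the finer information $(E_1^0)_{\rs{fin}}=A\cup\{(v,\infty)\mid v\in\s{E}{0}{sg}\cap\s{E}{0}{fin}\}$ and $(E_1^0)_{\rs{sce}}=\{(v,\infty)\mid v\in\s{E}{0}{sg}\cap\s{E}{0}{fin}\}$, which the paper's proof does not make explicit. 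The factorization $r_1=\iota\circ m$ is also what the paper reuses to get the factor map $m_1$ in Proposition~\ref{Prop:factormap}, so it does double duty there.
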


\begin{proof}
First note that 
$\big\{(v,\infty) \in E_1^0 \ \big|\ v \in \s{E}{0}{sg}\big\}
= \big\{(v,e) \in E_1^0 \ \big|\ e=\infty \big\}$
is a closed subset of $E_1^0$ which is homeomorphic to $\s{E}{0}{sg}$. 
Since $\bigl\{(r(e),e)\in E_1^0 \ \big|\ e\in E^1\bigr\}$ is its complement, 
it is an open subset of $E_1^0$. 
The map $r_1\colon E_1^1\to E_1^0$ is the composition of 
the maps $m\colon E_1^1\ni (e',e)\mapsto e'\in E^1$ and 
$\iota \colon E^1 \ni e' \mapsto (r(e'),e') \in E_1^0$. 
The map $m$ is proper beacause 
it is a restriction of the continuous map 
$m_1^1\colon \tE_1^1\ni (e',e)\mapsto e'\in \tE^1$ 
(which will be considered below). 
We are going to show that $m$ is surjective. 
For $e' \in \s{E}{1}{rg}$, 
there exists $e \in E^1$ with $d(e')=r(e)$. 
Then $(e',e) \in E_1^1$ satisfies $m((e',e)) = e'$. 
For $e' \in \s{E}{1}{sg}$, 
$(e',\infty) \in E_1^1$ satisfies $m((e',\infty)) = e'$. 
This shows that $m$ is surjective. 
It is clear that $\iota$ is a homeomorphism. 
Hence the map $r_1\colon E_1^1\to E_1^0$ is 
a proper surjection onto the open subset 
$\bigl\{(r(e),e)\in E_1^0 \ \big|\ e\in E^1\bigr\} \subset E_1^0$. 
This shows that 
$(E_1^0)_{\rs{rg}} = \bigl\{(r(e),e)\in E_1^0 \ \big|\ e\in E^1\bigr\}$. 
Taking their complements, we get 
$(E_1^0)_{\rs{sg}} = 
\big\{(v,\infty) \in E_1^0 \ \big|\ v \in \s{E}{0}{sg}\big\}$. 
\end{proof}

\begin{remark}
By the proof of Lemma~\ref{Lem:E1rg}, 
we see that $E_1^0$ can be devided 
into the open set 
$(E_1^0)_{\rs{rg}}$ homeomorphic to $E^1$ 
and the closed set $(E_1^0)_{\rs{sg}}$ 
homeomorphic to $\s{E}{0}{sg}$. 
Similarly, $E_1^1$ is a union of the open set 
\[
E^2 := \bigl\{(e',e)\in E^1\times E^1\ \big| \ 
d(e')=r(e) \bigr\} \subset E_1^1
\] 
and the closed set 
\[
\big\{(e',\infty) \in E_1^1 \ \big|\ e' \in \s{E}{1}{sg}\big\}
\cong \s{E}{1}{sg}. 
\]
\end{remark}

We define continuous maps $m_1^0$ and $m_1^1$ by 
\[
m_1^0\colon \tE_1^0\ni (v,e)\mapsto v\in \tE^0
\quad \text{ and } \quad 
m_1^1\colon \tE_1^1\ni (e',e)\mapsto e'\in \tE^1. 
\]
For $i=0,1$, 
$(\infty,\infty) \in \tE_1^i$ 
is the only element which is sent to $\infty\in \tE^i$ 
by $m_1^i$. 
Hence $m_1^i$ induces the proper continuous map 
from $E_1^i$ to $E^i$ for $i=0,1$. 
It is easy to check $d(m_1^1(e',e))=m_1^0(d_1(e',e))$ and 
$r(m_1^1(e',e))=m_1^0(r_1(e',e))$ for $(e',e) \in E_1^1$. 
It is also easy to see that if $e' \in E^1$ 
and $(v,e) \in E_1^0$ satisfy $d(e') = m_1^0((v,e))$ 
then $(e',e) \in E_1^1$ is the unique element 
satisfying $m_1^1((e',e)) = e'$ and $d_1((e',e)) = (v,e)$.

For the definition of factor maps and their regularity, 
see \cite{KaII}. 
We call a factor map $m = (m^0,m^1)$ {\em surjective} 
if $m^0$ is surjective. 
In this case $m^1$ is also surjective.

\begin{proposition}\label{Prop:factormap}
The pair $m_1=(m_1^0,m_1^1)$ 
is a regular surjective factor map from $E_1$ to $E$. 
\end{proposition}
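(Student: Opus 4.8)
The plan is to go through, in order, the conditions in the definition of a regular surjective factor map from \cite{KaII}, using the material already assembled in this section. Recall that $m_1 = (m_1^0, m_1^1)$ is a factor map once we know that $m_1^0 \colon E_1^0 \to E^0$ and $m_1^1 \colon E_1^1 \to E^1$ are proper continuous maps intertwining the domain and range maps, and that the pair $(m_1^1, d_1)$ identifies $E_1^1$ homeomorphically with the fibre product $\{(e',x) \in E^1 \times E_1^0 \mid d(e') = m_1^0(x)\}$ through $(e',e) \mapsto (m_1^1((e',e)), d_1((e',e))) = (e',(d(e'),e))$. Continuity of $m_1^0$ and $m_1^1$ is clear from their definitions; properness was obtained above by extending them to continuous maps $m_1^i \colon \tE_1^i \to \tE^i$ for which $(\infty,\infty)$ is the only point sent to $\infty$; and the intertwining identities $d \circ m_1^1 = m_1^0 \circ d_1$ and $r \circ m_1^1 = m_1^0 \circ r_1$ were recorded above. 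For the fibre-product condition, the map $(e',e) \mapsto (e',(d(e'),e))$ is visibly continuous, and the observation made above — that whenever $e' \in E^1$ and $(v,e) \in E_1^0$ satisfy $d(e') = m_1^0((v,e))$ the edge $(e',e)$ is the unique element of $E_1^1$ with $m_1^1((e',e)) = e'$ and $d_1((e',e)) = (v,e)$ — says precisely that this map is a bijection, with continuous inverse $(e',(v,e)) \mapsto (e',e)$. Hence $m_1$ is a factor map.

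Next I would prove surjectivity, i.e. that $m_1^0$ is onto (surjectivity of $m_1^1$ is then automatic, and was in any case already shown in the proof of Lemma~\ref{Lem:E1rg}). Let $v \in E^0$. If $v = r(e)$ for some $e \in E^1$, then $(v,e) \in E_1^0$ and $m_1^0((v,e)) = v$. If $v \notin r(E^1)$, I claim $v \in \s{E}{0}{sg}$, so that $(v,\infty) \in E_1^0$ and $m_1^0((v,\infty)) = v$. Indeed, if $v \notin \overline{r(E^1)}$ then $v \in \s{E}{0}{sce} \subseteq \s{E}{0}{sg}$; and if $v \in \overline{r(E^1)} \setminus r(E^1)$ then $v \notin \s{E}{0}{fin}$, because for any neighbourhood $V$ of $v$ with $r^{-1}(V)$ compact the set $r(E^1) \cap V = r(r^{-1}(V))$ would be closed and hence would contain $v$, contradicting $v \notin r(E^1)$; so $v \in \s{E}{0}{inf} \subseteq \s{E}{0}{sg}$.

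Finally I would check regularity. Unwinding the definition of a regular factor map from \cite{KaII}, the point is a compatibility between $m_1^0$ and the partition of $E^0$ into regular and singular vertices, namely $(m_1^0)^{-1}(\s{E}{0}{rg}) \subseteq (E_1^0)_{\rs{rg}}$ — equivalently, $m_1^0$ carries singular vertices to singular vertices. This is immediate from Lemma~\ref{Lem:E1rg}, which identifies $(E_1^0)_{\rs{sg}}$ with $\{(v,\infty) \in E_1^0 \mid v \in \s{E}{0}{sg}\}$: each such $(v,\infty)$ is carried by $m_1^0$ to $v \in \s{E}{0}{sg}$. I expect the only genuinely delicate point to be isolating precisely which condition ``regular'' reduces to for this factor map, since the substantive topological content is already contained in Proposition~\ref{Prop:E01cpt} and Lemma~\ref{Lem:E1rg}; once the definition is in place the verification is routine bookkeeping.
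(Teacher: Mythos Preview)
Your proposal is correct and follows essentially the same approach as the paper: both use the preparatory material before the proposition to get that $m_1$ is a factor map, invoke Lemma~\ref{Lem:E1rg} for regularity via the inclusion $(m_1^0)^{-1}(\s{E}{0}{rg}) \subseteq (E_1^0)_{\rs{rg}}$, and argue surjectivity of $m_1^0$ by the same case split as in the proof of Lemma~\ref{Lem:E1rg}. The only point the paper adds that you left implicit is that the definition of regularity from \cite{KaII} also involves the condition $(m_1^1)^{-1}(E^1)=E_1^1$, which holds trivially here since $m_1^1$ maps $E_1^1$ into $E^1$; this is exactly the ``delicate point'' you anticipated, and it is indeed routine.
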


\begin{proof}
We have already shown that $m_1$ is a factor map. 
To show that it is regular, 
it suffices to show 
$(m_1^0)^{-1}(\s{E}{0}{rg}) \subset (E_1^0)_{\rs{rg}}$
because $(m_1^1)^{-1}(E^1)=E_1^1$. 
This follows from Lemma~\ref{Lem:E1rg}. 
Finally, one can show that $m_1^0$ is surjective 
in a similar way to the proof of Lemma~\ref{Lem:E1rg}. 
\end{proof}

\section{Iterating the construction of the dual graphs}
\label{Sec:iterating}

In this section, 
we define a sequence of 
topological graphs $E_k=(E_k^0,E_k^1,d_k,r_k)$ 
for $k \in \N:=\{0,1,2,\ldots\}$ 
and factor maps $m_{k,l}$ from $E_l$ to $E_k$ for $k \leq l$ 
so that $((E_k)_{k}, (m_{k,l})_{k,l})$ 
becomes a projective system (see \cite[Definition~4.1]{KaII}). 

Let $E=(E^0,E^1,d,r)$ be a topological graph, 
and $E_1=(E_1^0,E_1^1,d_1,r_1)$ 
be the dual topological graph of $E$ 
defined in Section~\ref{Sec:dual}. 
Fix an integer $k$ grater than $1$, 
and we will define a topological graph $E_k=(E_k^0,E_k^1,d_k,r_k)$. 

We define $E_k^0$ and $E_k^1$ by 
\begin{align*}
E_k^0 
&:= \bigl\{\, (v,\, e_1, e_2, \ldots, e_k) 
\in E^0 \times \tE^1 \times \cdots \times \tE^1
\ \big|\ (v, e_1) \in E_1^0, (e_i,e_{i+1}) \in \tE_1^1\bigr\} \\
E_k^1 
&:= \bigl\{ (e_0, e_1, e_2, \ldots, e_k) 
\in E^1 \times \tE^1 \times \cdots \times \tE^1
\ \big|\ (e_0, e_1) \in E_1^1, (e_i,e_{i+1}) \in \tE_1^1\bigr\}. 
\end{align*}

\begin{lemma}
Two sets 
\begin{align*}
\tE_k^0 
&:= E_k^0 \cup \{(\infty,\ldots,\infty)\} \\
&\phantom{:}= \bigl\{\, (v,\, e_1, e_2, \ldots, e_k) 
\in \tE^0 \times \tE^1 \times \cdots \times \tE^1
\ \big|\ (v, e_1) \in \tE_1^0, (e_i,e_{i+1}) \in \tE_1^1\bigr\} \\
\tE_k^1 
&:= E_k^1 \cup \{(\infty,\ldots,\infty)\} \\
&\phantom{:}= \bigl\{ (e_0, e_1, e_2, \ldots, e_k) 
\in \tE^1 \times \tE^1 \times \cdots \times \tE^1
\ \big|\ (e_i,e_{i+1}) \in \tE_1^1\bigr\} 
\end{align*}
are compact. 
Therefore $E_k^0$ and $E_k^1$ are locally compact, 
and their one-point compactifications 
can be naturally identified with $\tE_k^0$ and $\tE_k^1$, 
respectively. 
\end{lemma}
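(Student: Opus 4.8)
The plan is to establish the two displayed descriptions of $\tE_k^0$ and $\tE_k^1$ first, and then to read off compactness from Proposition~\ref{Prop:E01cpt} together with Tychonoff's theorem. The one structural fact needed is that the point $\infty$ ``propagates'' through the relations defining $\tE_1^0$ and $\tE_1^1$: since $\tE_1^0 = E_1^0 \cup \{(\infty,\infty)\}$ and every element of $E_1^0$ has first coordinate in $E^0$, a pair $(v,e) \in \tE_1^0$ with $v = \infty$ must equal $(\infty,\infty)$, so $e = \infty$, whereas if $v \in E^0$ then $(v,e) \in E_1^0$. Likewise, using $\tE_1^1 = E_1^1 \cup \{(\infty,\infty)\}$ and the fact that elements of $E_1^1$ have first coordinate in $E^1$: if $(a,b) \in \tE_1^1$ and $a = \infty$ then $b = \infty$, while if $a \in E^1$ then $(a,b) \in E_1^1$.

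Using this I would prove both identities in the same way. Take a tuple $(v,e_1,\ldots,e_k)$ in the right-hand side of the proposed description of $\tE_k^0$, so $(v,e_1) \in \tE_1^0$ and $(e_i,e_{i+1}) \in \tE_1^1$ for $1 \le i \le k-1$. If $v = \infty$, then $e_1 = \infty$, and then inductively $e_2 = \cdots = e_k = \infty$, so the tuple is $(\infty,\ldots,\infty)$. If $v \in E^0$, then $(v,e_1) \in E_1^0$ and the remaining conditions are exactly those defining $E_k^0$, so the tuple lies in $E_k^0$. Hence the right-hand side is contained in $E_k^0 \cup \{(\infty,\ldots,\infty)\} = \tE_k^0$; the reverse inclusion is immediate since $E_1^0 \subset \tE_1^0$ and $(\infty,\infty)$ belongs to both $\tE_1^0$ and $\tE_1^1$. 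The description of $\tE_k^1$ is obtained identically, now splitting on whether the leading coordinate $e_0$ equals $\infty$ or lies in $E^1$.

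For compactness, the product $\tE^0 \times \tE^1 \times \cdots \times \tE^1$ (with $k$ copies of $\tE^1$) is compact Hausdorff by Tychonoff's theorem. By Proposition~\ref{Prop:E01cpt}, $\tE_1^0$ and $\tE_1^1$ are compact, hence closed in $\tE^0 \times \tE^1$ and $\tE^1 \times \tE^1$ respectively. The description just proved exhibits $\tE_k^0$ as the intersection of the preimages of $\tE_1^0$ and of $\tE_1^1$ under the continuous coordinate-projection maps $(v,e_1,\ldots,e_k) \mapsto (v,e_1)$ and $(v,e_1,\ldots,e_k) \mapsto (e_i,e_{i+1})$, so $\tE_k^0$ is closed in a compact Hausdorff space, hence compact; the same argument applies to $\tE_k^1 \subset \tE^1 \times \cdots \times \tE^1$. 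Finally, $E_k^0 = \tE_k^0 \setminus \{(\infty,\ldots,\infty)\}$ is the complement of a point, hence open, in the compact Hausdorff space $\tE_k^0$, so $E_k^0$ is locally compact and $\tE_k^0$ is canonically its one-point compactification; likewise for $E_k^1$.

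There is no genuine obstacle here: everything is formal once Proposition~\ref{Prop:E01cpt} is available, and the only point requiring a little care is the bookkeeping in the set-identities, i.e.\ the forward propagation of $\infty$. One can also avoid writing the tuple descriptions out and instead induct on $k$, observing that the same relations identify $\tE_k^0$ with the fiber product of $\tE_{k-1}^0$ and $\tE_1^1$ over $\tE^1$ (via the last coordinate of $\tE_{k-1}^0$ and the map $m_1^1$), and similarly $\tE_k^1$ with a fiber product of $\tE_{k-1}^1$ and $\tE_1^1$; this reduces compactness to Proposition~\ref{Prop:E01cpt} and to the fact that a fiber product of compact spaces over a Hausdorff base is compact.
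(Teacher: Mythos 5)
Your proof is correct and follows essentially the same route as the paper: the paper's own (very brief) argument likewise deduces compactness of $\tE_k^0$ and $\tE_k^1$ from the fact that $\tE_1^0$ and $\tE_1^1$ are closed in $\tE^0\times\tE^1$ and $\tE^1\times\tE^1$, so that $\tE_k^0$ and $\tE_k^1$ are closed subsets of the compact products. Your additional bookkeeping (the propagation of $\infty$ establishing the displayed set identities, and the identification of $\tE_k^i$ with the one-point compactification of $E_k^i$) is exactly the routine verification the paper leaves implicit.
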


\begin{proof}
Since $\tE_1^0 \subset \tE^0 \times \tE^1$ 
and $\tE_1^1 \subset \tE^1 \times \tE^1$ are closed, 
$\tE_k^0$ and $\tE_k^1$ are closed subsets of 
$\tE^0 \times \tE^1 \times \cdots \times \tE^1$ and 
$\tE^1 \times \tE^1 \times \cdots \times \tE^1$, respectively. 
Hence they are compact. 
\end{proof}

We define $d_k, r_k \colon E_k^1 \to E_k^0$ by 
\begin{align*}
d_k\big((e_0, e_1, e_2, \ldots, e_k)\big)
&=(d(e_0), e_1, e_2, \ldots, e_k),\\ 
r_k\big((e_0, e_1, e_2, \ldots, e_k)\big)
&=(r(e_0), e_0, e_1, \ldots, e_{k-1}) 
\end{align*}
for $(e_0, e_1, e_2, \ldots, e_k) \in E_k^1$. 

\begin{lemma}\label{Lem:dkrk}
The map $d_k$ is a well-defined local homeomorphism, 
and $r_k$ is a well-defined continuous map. 
\end{lemma}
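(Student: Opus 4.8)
The plan is to treat the three assertions in turn. \textbf{Well-definedness.} Given $(e_0,e_1,\ldots,e_k)\in E_k^1$, I must check that $(d(e_0),e_1,\ldots,e_k)\in E_k^0$; since the conditions $(e_i,e_{i+1})\in\tE_1^1$ for $i\ge1$ are carried over unchanged, the only point is $(d(e_0),e_1)\in E_1^0$, which I would extract from $(e_0,e_1)\in E_1^1$ by the usual case split: if $e_1\in E^1$ both memberships amount to $d(e_0)=r(e_1)$; if $e_1=\infty$ then $e_0\in E^1\setminus\s{E}{1}{rg}=\s{E}{1}{sg}=d^{-1}(\s{E}{0}{sg})$, so $d(e_0)\in\s{E}{0}{sg}=E^0\setminus\s{E}{0}{rg}$, which is exactly the condition for $(d(e_0),\infty)\in E_1^0$. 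Symmetrically, $(r(e_0),e_0,e_1,\ldots,e_{k-1})\in E_k^0$ because $(r(e_0),e_0)\in E_1^0$ trivially (as $e_0\in E^1$), $(e_0,e_1)\in E_1^1\subset\tE_1^1$, and the remaining pairs $(e_i,e_{i+1})$ for $i\le k-2$ are among the defining conditions of $E_k^1$. \textbf{Continuity.} Both $d_k$ and $r_k$ are restrictions-and-corestrictions of maps between finite products of copies of $\tE^0$ and $\tE^1$ that are assembled coordinatewise from $d$, $r$ and coordinate projections, all of which are continuous; hence $d_k$ and $r_k$ are continuous.

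For the local-homeomorphism property of $d_k$ I would first record the identity
\[
E_k^1=\bigl\{(e_0,e_1,\ldots,e_k)\in E^1\times\tE^1\times\cdots\times\tE^1 \ \big|\ (d(e_0),e_1,\ldots,e_k)\in E_k^0\bigr\},
\]
the analogue for $E_k$ of the description of $E_1^1$ used in Section~\ref{Sec:dual}: the inclusion $\subset$ is the well-definedness of $d_k$ just proved, and $\supset$ is the same case split run in reverse (if $(d(e_0),e_1)\in E_1^0$ then $(e_0,e_1)\in E_1^1$, and the other linking conditions are inherited from $E_k^0$). Now fix $p=(e_0,e_1,\ldots,e_k)\in E_k^1$; since $d\colon E^1\to E^0$ is a local homeomorphism, choose an open $U\ni e_0$ in $E^1$ with $d|_U\colon U\to d(U)$ a homeomorphism onto an open subset $d(U)\subset E^0$, and set $\Omega:=(U\times\tE^1\times\cdots\times\tE^1)\cap E_k^1$, an open neighbourhood of $p$.

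I claim $d_k$ maps $\Omega$ homeomorphically onto the open set $\Omega':=(d(U)\times\tE^1\times\cdots\times\tE^1)\cap E_k^0$. Injectivity on $\Omega$ is clear because $d_k$ alters only the first coordinate, via the injective $d|_U$. If $(v,e_1,\ldots,e_k)\in\Omega'$, then $v\in d(U)$ yields a unique $e_0'\in U$ with $d(e_0')=v$, and the displayed identity turns $(v,e_1,\ldots,e_k)\in E_k^0$ into $(e_0',e_1,\ldots,e_k)\in E_k^1$, hence into a point of $\Omega$ mapping to $(v,e_1,\ldots,e_k)$; so $d_k(\Omega)=\Omega'$. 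The inverse $\Omega'\to\Omega$ is $(v,e_1,\ldots,e_k)\mapsto\bigl((d|_U)^{-1}(v),e_1,\ldots,e_k\bigr)$, which is continuous because $(d|_U)^{-1}$ is. Thus every point of $E_k^1$ has a neighbourhood on which $d_k$ restricts to a homeomorphism onto an open subset of $E_k^0$, so $d_k$ is a local homeomorphism.

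The one step that genuinely uses the structure of these spaces, and the place I would expect to concentrate the care, is the displayed description of $E_k^1$ in terms of $E_k^0$: it is precisely the compatibility between the ``$e_0$ versus $e_1$'' condition defining $E_k^1$ and the ``$v$ versus $e_1$'' condition defining $E_k^0$ that lets openness of $d(U)$ propagate to openness of $d_k(\Omega)$. Once that is in hand the extra coordinates $e_2,\ldots,e_k$ are merely transported, and everything reduces to the familiar case split $e_1\in E^1$ versus $e_1=\infty$ together with the definitions of $\s{E}{0}{sg}$ and $\s{E}{1}{sg}$.
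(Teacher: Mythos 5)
Your proof is correct, and in substance it is the same argument the paper uses: the paper disposes of Lemma~\ref{Lem:dkrk} by citing that $d_1$ is a local homeomorphism and $r_1$ is continuous, and those facts were themselves established in Section~\ref{Sec:dual} via exactly the identity and case split ($e_1\in E^1$ versus $e_1=\infty$, using $\s{E}{1}{sg}=d^{-1}(\s{E}{0}{sg})$) that you carry out directly at level $k$. So your write-up amounts to unwinding the paper's reduction to the $k=1$ case into a self-contained verification, including the key description $E_k^1=\bigl\{(e_0,\ldots,e_k)\ \big|\ (d(e_0),e_1,\ldots,e_k)\in E_k^0\bigr\}$, which is the correct $k$-analogue of the identity used for $E_1^1$.
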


\begin{proof}
These follow from the facts 
that $d_1\colon E_1^1 \to E_1^0$ is locally homeomorphic, 
and that $r_1\colon E_1^1\to E_1^0$ is continuous. 
\end{proof}

Thus we get the topological graph $E_k=(E_k^0,E_k^1,d_k,r_k)$ 
for $k=2,3,\ldots$. 
Next, we define maps $m_k^0$ and $m_k^1$ by 
\begin{align*}
m_k^0 \colon 
E_k^0\ni (v,\, e_1, e_2, \ldots, e_k) 
& \mapsto v \in E^0 \\
m_k^1 \colon 
E_k^1\ni (e_0, e_1, e_2, \ldots, e_k) 
& \mapsto e_0 \in E^1
\end{align*}
We set $m_k := (m_k^0, m_k^1)$ 
which will be shown to be a regular factor map. 

We set $E_0 := E$, and $m_{0,k} := m_{k}$. 
For $k \leq l$, 
we define 
\begin{align*}
m_{k,l}^0 \colon 
E_l^0\ni (v,\, e_1, e_2, \ldots, e_l) 
& \mapsto (v,\, e_1, e_2, \ldots, e_k) \in E_k^0 \\
m_{k,l}^1 \colon 
E_l^1\ni (e_0, e_1, e_2, \ldots, e_l) 
& \mapsto (e_0, e_1, e_2, \ldots, e_k) \in E_k^1
\end{align*}
and set $m_{k,l} := (m_{k,l}^0, m_{k,l}^1)$. 
The following is easy to see. 

\begin{lemma}
For $j \leq k \leq l$ and $i=0,1$, 
we have $m_{j,k}^i \circ m_{k,l}^i = m_{j,l}^i$. 
\end{lemma}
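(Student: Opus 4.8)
The plan is to read the identity directly off the explicit formulas, handling $i=0$ and $i=1$ in parallel since they have the same shape: all the maps $m_{k,l}^i$ are simply truncations of tuples, so the claim amounts to the observation that truncating a length-$l$ tuple first to length $k$ and then to length $j$ (with $j\le k\le l$) produces the same result as truncating it directly to length $j$.

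Concretely, I would fix $j\le k\le l$ and take an element $(v, e_1, \ldots, e_l)\in E_l^0$. Applying $m_{k,l}^0$ gives $(v, e_1, \ldots, e_k)\in E_k^0$, and then applying $m_{j,k}^0$ gives $(v, e_1, \ldots, e_j)\in E_j^0$; here one uses the convention that when $j=0$ this tuple is just $v\in E^0=E_0^0$, which is exactly the definition $m_{0,k}^0=m_k^0$. On the other hand $m_{j,l}^0$ sends $(v, e_1, \ldots, e_l)$ to the same element $(v, e_1, \ldots, e_j)$, so $m_{j,k}^0\circ m_{k,l}^0 = m_{j,l}^0$. The computation for $i=1$ is identical after replacing the leading coordinate $v\in E^0$ by $e_0\in E^1$ and writing tuples as $(e_0, e_1, \ldots)$; it uses $m_{0,k}^1=m_k^1$ in the same way.

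There is essentially no obstacle: the only points worth a remark are that each intermediate truncation again lands in the appropriate space $E_k^i$ (the defining conditions $(v,e_1)\in E_1^0$, $(e_0,e_1)\in E_1^1$ and $(e_i,e_{i+1})\in\tE_1^1$ that cut out $E_k^0$ and $E_k^1$ are all inherited by shorter sub-tuples), and that the boundary cases $j=0$ and $k=l$ are consistent with the conventions $E_0=E$, $m_{0,k}=m_k$, and $m_{l,l}^i=\id$. Once these are noted, the equality is immediate, which is why it is stated as easy to see.
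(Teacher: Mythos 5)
Your proof is correct and is exactly the verification the paper has in mind: the paper simply states the lemma as "easy to see," and the content is precisely the observation that all the $m_{k,l}^i$ are right-truncations of tuples, so composing truncations agrees with truncating directly. Your attention to the conventions $E_0=E$, $m_{0,k}=m_k$ and to the well-definedness of intermediate truncations fills in the only details worth mentioning.
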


One can show that $m_{k,l}$ is a regular factor map 
in a similar way to the proof of Proposition~\ref{Prop:factormap}.
Instead, we will use the following proposition. 

\begin{proposition}\label{Prop:k+j}
There exist natural 
isomorphisms $(E_k)_j \cong E_{k+j}$ 
for $j,k \in \N$. 
Under these isomorphisms, 
for $j,k,l \in \N$ with $j \leq l$ and $i=0,1$ 
the maps $m_{j,l}^i \colon (E_k)_l^i \to (E_k)_j^i$ 
defined as above using $E_k$ instead of $E$ 
coincide with $m_{k+j,k+l}^i \colon E_{k+l}^i \to E_{k+j}^i$. 
\end{proposition}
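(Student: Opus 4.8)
The plan is to make the asserted isomorphisms $(E_k)_j \cong E_{k+j}$ completely explicit at the level of sets, and then observe that all the structure maps are literally the same on both sides. Recall that $E_{k+j}^0$ consists of tuples $(v, e_1, \ldots, e_{k+j})$ with $(v,e_1) \in E_1^0$ and $(e_i, e_{i+1}) \in \tE_1^1$ for all relevant $i$, and similarly for $E_{k+j}^1$. On the other side, $(E_k)_j^0$ consists of tuples $(w, f_1, \ldots, f_j)$ where $w \in E_k^0$, each $f_i$ lies in $\widetilde{(E_k)^1}$, and adjacency conditions hold with respect to the dual-graph data of $E_k$. So first I would spell out $\widetilde{(E_k)^1} = \tE_k^1$, which by the lemma above is the set of tuples $(e_0, e_1, \ldots, e_k) \in \tE^1 \times \cdots \times \tE^1$ with $(e_i, e_{i+1}) \in \tE_1^1$, together with the point $(\infty, \ldots, \infty)$. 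The candidate bijection $(E_k)_j^0 \to E_{k+j}^0$ then just ``flattens'' the nested tuple: given $w = (v, e_1, \ldots, e_k) \in E_k^0$ and $f_1, \ldots, f_j$, write $f_i = (e_{\,k+i}^{(i)}, \ldots)$ and check that the adjacency relations force $f_i$ to be the truncation $(e_{k+i-j+\,\cdot})$... more precisely, that consecutive $f_i$ overlap in all but one coordinate, so the whole nested datum is recorded by a single flat tuple $(v, e_1, \ldots, e_{k+j})$. I would verify that the dual-graph adjacency conditions for $(E_k)_1$ unwind exactly to the conditions $(e_i, e_{i+1}) \in \tE_1^1$.

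Second, I would carry out the analogous flattening for the edge sets $(E_k)_j^1 \cong E_{k+j}^1$, which is the same computation with $E^1$ in place of $E^0$ in the leading slot; here the key point is that the condition defining $(E_1)^1$ in terms of $E$, namely $(e_0,e_1)\in E_1^1$, iterates correctly because $(E_k)^1 = E_k^1$ carries the regular/singular decomposition inherited from $E$ via the factor map $m_k$. It is convenient to prove the $j=1$ case $(E_k)_1 \cong E_{k+1}$ first, directly from the definitions in Section~\ref{Sec:dual} applied to $E_k$, and then obtain general $j$ by an easy induction: $(E_k)_{j+1} = ((E_k)_1)_j \cong (E_{k+1})_j \cong E_{k+1+j}$, using the $j=1$ case, the inductive hypothesis, and the already-established compatibility of the isomorphisms with one further dualization. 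One should check that the isomorphism $(E_k)_1 \cong E_{k+1}$ also matches the one-point compactifications, i.e.\ sends the added point $(\infty,\infty)$ of $\widetilde{(E_k)_1^i}$ to $(\infty,\ldots,\infty)$ of $\tE_{k+1}^i$; this is immediate from the defining formulas.

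Finally, for the compatibility with the maps $m_{j,l}$, I would simply unwind both sides under the flattening isomorphism. On $E_{k+l}^0$ the map $m_{k+j,k+l}^0$ deletes the last $l-j$ coordinates $e_{k+j+1}, \ldots, e_{k+l}$; under the flattening, deleting those coordinates corresponds precisely to replacing the nested $j$-fold-dual datum over $E_k$ by its $l \mapsto j$ truncation, which is by definition $m_{j,l}^0$ computed inside the world of $E_k$. The same holds for the edge component. Since every map in sight is defined by an explicit coordinate formula, this step is a routine bookkeeping check once the flattening bijection has been pinned down, and consistency with the cocycle identity $m_{j,k}\circ m_{k,l} = m_{j,l}$ is automatic. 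The main obstacle is purely notational: organizing the multi-indexed tuples so that the ``overlap by all-but-one-coordinate'' structure of iterated duals is visibly the same as a single flat tuple, and making sure the boundary conditions involving $\s{E}{0}{rg}$, $\s{E}{1}{rg}$ and the points at infinity line up at each level rather than getting shifted by one; I would handle this by treating the $j=1$, general $k$ case carefully and in full, and then letting the induction do the rest.
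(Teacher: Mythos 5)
Your proposal is essentially the paper's own proof: the paper simply writes down the explicit overlapping-window ("nesting") map $E_{k+j}^i \to (E_k)_j^i$, $(v,e_1,\ldots,e_{k+j}) \mapsto \big((v,e_1,\ldots,e_k),(e_1,\ldots,e_{k+1}),\ldots,(e_j,\ldots,e_{k+j})\big)$, and declares the remaining coordinate and boundary checks routine, so your flattening bijection (its inverse) together with the $j=1$-plus-induction organization is the same argument. The one inaccurate aside is the claim that $E_k^1$ "carries the regular/singular decomposition inherited from $E$ via the factor map $m_k$": regularity of $m_k$ only gives the inclusion $(m_k^0)^{-1}(\s{E}{0}{rg}) \subset (E_k^0)_{\rs{rg}}$, and the inclusion is generally strict (e.g.\ $(E_1^0)_{\rs{rg}} \cong E^1$ contains points $(r(e),e)$ with $r(e)$ singular in $E$); the fact your boundary-condition check actually needs is Lemma~\ref{Lem:E1rg} applied iteratively, identifying $(E_k^0)_{\rs{sg}}$ with the all-$\infty$-tail tuples, and with that substitution your plan goes through unchanged.
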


\begin{proof}
For $j,k \in \N$, 
we define the map $E_{k+j}^0 \to (E_k)_j^0$ by 
\[
(v_0, e_1, e_2, \ldots, e_{k+j})
\mapsto \big( (v_0, e_1, \ldots, e_k), (e_1, e_2, \ldots, e_{k+1}), 
\ldots, (e_j, e_{j+1}, \ldots, e_{k+j}) \big), 
\]
and $E_{k+j}^1 \to (E_k)_j^1$ similarly. 
It is routine to check that 
these maps induce the isomorphism $(E_k)_j \cong E_{k+j}$ 
with desired properties. 
\end{proof}

\begin{proposition}
For every $k, l \in \N$ with $k \leq l$, 
$m_{k,l} := (m_{k,l}^0, m_{k,l}^1)$ 
is a regular surjective factor map from $E_l$ to $E_k$. 
\end{proposition}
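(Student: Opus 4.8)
The plan is to deduce the statement from Proposition~\ref{Prop:factormap} together with Proposition~\ref{Prop:k+j}, by first treating the ``one-step'' maps $m_{k,k+1}$ and then writing a general $m_{k,l}$ as an iterated composition of such. Throughout I would use the composition identity $m_{j,k}^i\circ m_{k,l}^i=m_{j,l}^i$ recorded in the preceding lemma, together with the standard facts about factor maps (this is immediate from the definitions, or see \cite{KaII}) that the identity factor map $\id$ is regular and surjective and that a composition of regular surjective factor maps is again a regular surjective factor map.

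\emph{Step 1: the one-step maps.} Fix $k\in\N$ and apply Proposition~\ref{Prop:factormap} with the topological graph $E_k$ in place of $E$. This says that the dual-graph factor map $m_{0,1}=(m_1^0,m_1^1)$ formed from $E_k$ is a regular surjective factor map from $(E_k)_1$ to $E_k$. By Proposition~\ref{Prop:k+j} (with $j=0$, $l=1$, and the given $k$), the natural isomorphism $(E_k)_1\cong E_{k+1}$ carries these maps $m_{0,1}^i\colon (E_k)_1^i\to E_k^i$ onto $m_{k,k+1}^i\colon E_{k+1}^i\to E_k^i$ for $i=0,1$. Hence $m_{k,k+1}=(m_{k,k+1}^0,m_{k,k+1}^1)$ is a regular surjective factor map from $E_{k+1}$ to $E_k$, for every $k\in\N$.

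\emph{Step 2: induction on $l-k$.} If $l=k$, then $m_{k,k}=\id$ on $E_k$ is a regular surjective factor map. If $l>k$, the composition identity applied to the triple $k\le k+1\le l$ gives $m_{k,l}=m_{k,k+1}\circ m_{k+1,l}$. By Step 1 the map $m_{k,k+1}$ is a regular surjective factor map from $E_{k+1}$ to $E_k$, and by the induction hypothesis $m_{k+1,l}$ is a regular surjective factor map from $E_l$ to $E_{k+1}$; therefore their composition $m_{k,l}$ is a regular surjective factor map from $E_l$ to $E_k$. This completes the induction.

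The only point requiring genuine care is the identification in Step~1: one must be sure that, under the coordinate-bookkeeping isomorphism $(E_k)_j\cong E_{k+j}$ of Proposition~\ref{Prop:k+j}, the dual-graph factor map of $E_k$ in the case $j=1$ really is $m_{k,k+1}$ (that is, forgetting the ``new'' first coordinate of $(E_k)_1$ must correspond to forgetting the last coordinate $e_{k+1}$ of $E_{k+1}$). But this is precisely the $j=0$, $l=1$ instance of the compatibility already asserted in Proposition~\ref{Prop:k+j}, so no fresh computation is needed; everything else in the argument is purely formal.
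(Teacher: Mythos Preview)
Your proof is correct and follows essentially the same approach as the paper: both arguments use Proposition~\ref{Prop:factormap} together with Proposition~\ref{Prop:k+j} to see that each one-step map $m_{k,k+1}$ is a regular surjective factor map, and then obtain the general $m_{k,l}$ as a composition of such maps (the paper writes out the full chain $m_{k,k+1}\circ\cdots\circ m_{l-1,l}$, while you phrase the same thing as an induction on $l-k$). No substantive difference.
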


\begin{proof}
By Proposition~\ref{Prop:factormap}, 
$m_{0,1}=m_1$ satisfies the desired properties. 
By Proposition~\ref{Prop:k+j}, 
$m_{k,k+1}$ coincides with $m_{0,1}$ for $E_k$, 
and hence satisfies the desired properties. 
Finally, since the desired properties are stable 
under taking compositions, 
\[
m_{k,l} = m_{k,k+1} \circ m_{k+1,k+2} \circ \cdots \circ m_{l-1,l}
\]
satisfies the desired properties 
for every $k, l \in \N$ with $k \leq l$. 
\end{proof}

Thus we get the regular surjective 
projective system $((E_k)_{k}, (m_{k,l})_{k,l})$ 
of topological graphs 
such that $E_0$ is a given topological graph $E$, 
and $E_1$ is its dual topological graph.

\section{Infinite path spaces and SGDSs}\label{Sec:infinite}

In this section, 
we define a topological graph 
$E_\infty=(E_\infty^0, E_\infty^1, d_\infty, r_\infty)$ 
and show that $E_\infty$ is the project limit 
of $((E_k)_{k}, (m_{k,l})_{k,l})$. 

We define $E_\infty^0$ and $E_\infty^1$ by 
\begin{align*}
E_\infty^0 
&:= \bigl\{\, (v,\, e_1, e_2, \ldots) 
\in E^0 \times \tE^1 \times \cdots 
\ \big|\ (v, e_1) \in E_1^0, (e_i,e_{i+1}) \in \tE_1^1\bigr\} \\
E_\infty^1 
&:= \bigl\{ (e_0, e_1, e_2, \ldots) 
\in E^1 \times \tE^1 \times \cdots 
\ \big|\ (e_0, e_1) \in E_1^1, (e_i,e_{i+1}) \in \tE_1^1\bigr\}. 
\end{align*}
Similarly as in Section~\ref{Sec:iterating}, 
we see that 
\begin{align*}
\tE_\infty^0 
&:= E_\infty^0 \cup \{(\infty,\infty,\ldots)\}\\
&\phantom{:}= \bigl\{\, (v,\, e_1, e_2, \ldots) 
\in \tE^0 \times \tE^1 \times \cdots 
\ \big|\ (v, e_1) \in \tE_1^0, (e_i,e_{i+1}) \in \tE_1^1\bigr\} 
\end{align*}
and
\begin{align*}
\tE_\infty^1 
&:= E_\infty^1 \cup \{(\infty,\infty,\ldots)\}\\
&\phantom{:}= \bigl\{ (e_0, e_1, e_2, \ldots) 
\in \tE^1 \times \tE^1 \times \cdots 
\ \big|\ (e_i,e_{i+1}) \in \tE_1^1\bigr\}\phantom{kkkkkkkkkk} 
\end{align*}
are compact. 
Hence $E_\infty^0$ and $E_\infty^1$ are locally compact spaces 
whose one-point compactifications 
can be naturally identified with $\tE_\infty^0$ and $\tE_\infty^1$. 
Next we define $d_\infty, r_\infty \colon E_\infty^1 \to E_\infty^0$ by 
\begin{align*}
d_\infty\big((e_0, e_1, e_2, \ldots )\big)
&=(d(e_0), e_1, e_2, \ldots ),\\ 
r_\infty\big((e_0, e_1, e_2, \ldots )\big)
&=(r(e_0), e_0, e_1, \ldots ) 
\end{align*}
for $(e_0, e_1, e_2, \ldots) \in E_\infty^1$. 
As in Lemma~\ref{Lem:dkrk}, 
we can see that $d_\infty$ is a well-defined local homeomorphism, 
and $r_\infty$ is a well-defined continuous map. 
Thus we get a topological graph 
$E_\infty := (E_\infty^0,E_\infty^1,d_\infty,r_\infty)$. 

\begin{theorem}\label{Thm:Einfty=SGDS}
The topological graph 
$E_\infty=(E_\infty^0,E_\infty^1,d_\infty,r_\infty)$ 
is an SGDS. 
\end{theorem}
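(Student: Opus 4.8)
The plan is to reduce the statement to the lemma in Section~\ref{Sec:topgraph}: a topological graph arises from an SGDS exactly when its range map is a homeomorphism onto an open subset of the vertex space. So it suffices to show that $r_\infty\colon E_\infty^1\to E_\infty^0$ is a homeomorphism onto an open subset of $E_\infty^0$. Since $r_\infty$ is already known to be a well-defined continuous map, I would break the remaining work into injectivity, the description of the image together with its openness, and continuity of the inverse. Injectivity is immediate from the formula $r_\infty\big((e_0,e_1,e_2,\ldots)\big)=(r(e_0),e_0,e_1,\ldots)$: every coordinate $e_0,e_1,e_2,\ldots$ reappears in the image, so the preimage of a point is unique.

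Next I would identify the image as
\[
r_\infty(E_\infty^1)=\bigl\{(v,e_1,e_2,\ldots)\in E_\infty^0\ \big|\ e_1\in E^1\bigr\}.
\]
The inclusion ``$\subseteq$'' holds because the first edge-coordinate of $r_\infty\big((e_0,e_1,\ldots)\big)$ is $e_0\in E^1$. For ``$\supseteq$'', given $(v,e_1,e_2,\ldots)\in E_\infty^0$ with $e_1\in E^1$, the condition $(v,e_1)\in E_1^0$ forces $v=r(e_1)$, and $(e_1,e_2)\in\tE_1^1$ with $e_1\in E^1$ gives $(e_1,e_2)\in E_1^1$; hence $(e_1,e_2,e_3,\ldots)\in E_\infty^1$ and $r_\infty\big((e_1,e_2,e_3,\ldots)\big)=(v,e_1,e_2,\ldots)$. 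Openness of this image is then clear, since it is the preimage of the open set $E^1\subset\tE^1$ under the continuous coordinate projection $E_\infty^0\ni(v,e_1,e_2,\ldots)\mapsto e_1\in\tE^1$.

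Finally I would write the inverse map explicitly: on $r_\infty(E_\infty^1)$ it sends $(v,e_1,e_2,\ldots)$ to $(e_1,e_2,e_3,\ldots)$, which is the restriction of the continuous projection $\tE^0\times\tE^1\times\tE^1\times\cdots\to\tE^1\times\tE^1\times\cdots$ that forgets the first coordinate, hence continuous. Combining the three steps, $r_\infty$ is a homeomorphism onto an open subset of $E_\infty^0$, and the lemma in Section~\ref{Sec:topgraph} yields that $E_\infty$ is an SGDS. I do not anticipate any real obstacle here, since everything reduces to unwinding the definitions of $E_\infty^0$, $E_\infty^1$, $E_1^0$ and $E_1^1$; the one point that deserves care is the image description, where one must verify both that it is cut out by the single condition $e_1\in E^1$ and that the shifted sequence $(e_1,e_2,\ldots)$ really satisfies the defining relations of $E_\infty^1$ --- in particular that $(e_1,e_2)$ lies in $E_1^1$, not just in $\tE_1^1$.
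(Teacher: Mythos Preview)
Your proof is correct and follows essentially the same approach as the paper: both invoke the lemma in Section~\ref{Sec:topgraph} and reduce to showing that $r_\infty$ is a homeomorphism onto an open subset of $E_\infty^0$. The paper's proof simply declares this ``clear'' and records the image description in the remark following the proof, whereas you have carefully spelled out the injectivity, the image identification (including the verification that $(e_1,e_2)\in E_1^1$ rather than merely $\tE_1^1$), openness, and continuity of the inverse---all of which are routine but exactly the details the paper omits.
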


\begin{proof}
We need to show $r_\infty\colon E_\infty^1 \to E_\infty^0$ 
is a homeomorphism onto the image, 
but it is clear. 
\end{proof}

Note that the image of $r_\infty$ 
which coincides with $(E_\infty^0)_{\rs{rg}}$
is the set of elements $(v,e_1, e_2, \ldots) \in E_\infty^0$ 
with $e_1 \neq \infty$. 
Taking complements, we get 
\begin{align*}
(E_\infty^0)_{\rs{sg}}
&= \big\{ (v, e_1, e_2, \ldots) \in E_\infty^0\ \big|\ 
e_1= \infty \big\} \\
&= \big\{ (v, \infty, \infty, \ldots) \in E_\infty^0\ \big|\ 
v \in \s{E}{0}{sg} \big\}
\cong \s{E}{0}{sg}.
\end{align*}

It is routine to check the following. 
For the definition of project limits, 
see \cite[Definition~4.4]{KaII}. 

\begin{proposition}\label{Prop:lim}
The topological graph $E_\infty$ 
coincided with the project limit of the 
projective system $((E_k)_{k}, (m_{k,l})_{k,l})$ 
defined in Section~\ref{Sec:iterating}. 
\end{proposition}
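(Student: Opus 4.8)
The plan is to verify directly that $E_\infty$, together with the family of factor maps $(m_{k,\infty})_{k}$ given by $m_{k,\infty}^i(v,e_1,e_2,\ldots)=(v,e_1,\ldots,e_k)$ (and $m_{0,\infty}^i$ forgetting all but the zeroth coordinate), satisfies the universal property of the projective limit as recorded in \cite[Definition~4.4]{KaII}. So the first step is to check that each $m_{k,\infty}=(m_{k,\infty}^0,m_{k,\infty}^1)$ is a factor map from $E_\infty$ to $E_k$: well-definedness is immediate from the defining conditions $(v,e_1)\in E_1^0$, $(e_i,e_{i+1})\in\tE_1^1$, which are exactly the conditions cutting out $E_k^0$ and $E_k^1$ after truncation; continuity and properness follow as in Section~\ref{Sec:iterating}, using that the one-point compactification $\tE_\infty^i$ maps to $\tE_k^i$ continuously sending $\infty$ only to $\infty$; and compatibility with $d,r$ is a one-line check from the formulas for $d_\infty,r_\infty$ versus $d_k,r_k$. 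I would also note $m_{k,l}\circ m_{l,\infty}=m_{k,\infty}$, so the $m_{k,\infty}$ form a cone over the projective system.

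Next I would establish the universal property. Given any topological graph $F$ with factor maps $n_k\colon F\to E_k$ satisfying $m_{k,l}\circ n_l=n_k$, the compatibility forces, for each vertex $w\in F^0$, the coordinates of $n_k^0(w)\in E_k^0$ to stabilize: writing $n_k^0(w)=(v,e_1,\ldots,e_k)$, the relation $m_{k-1,k}^0\circ n_k^0=n_{k-1}^0$ says the first $k$ coordinates of $n_k^0(w)$ agree with $n_{k-1}^0(w)$. Hence there is a unique sequence $(v,e_1,e_2,\ldots)$ whose truncations are the $n_k^0(w)$, and the constraints $(v,e_1)\in E_1^0$, $(e_i,e_{i+1})\in\tE_1^1$ hold because they hold at each finite stage, so this sequence lies in $E_\infty^0$. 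Define $n_\infty^0(w)$ to be this sequence; define $n_\infty^1$ on edges identically. Continuity of $n_\infty^i$ follows because $E_\infty^i$ carries the subspace topology from the product $\tE^0\times\tE^1\times\cdots$ (equivalently the initial topology from the truncation maps), so a map into it is continuous iff all its coordinate compositions are, and those are exactly the (continuous) $n_k^i$; properness follows the same way after compactifying. Compatibility $d_\infty\circ n_\infty^1=n_\infty^0\circ d_F$ and the analogue for $r$ are checked coordinatewise, reducing to the already-known compatibilities for the $n_k$. Uniqueness of $n_\infty$ is forced by $m_{k,\infty}\circ n_\infty=n_k$ for all $k$, since a point of $E_\infty^i$ is determined by its truncations.

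I do not expect a serious obstacle here; the proposition is of the routine bookkeeping type, which is presumably why the paper says ``It is routine to check.'' The one place that deserves a moment's care is confirming that the topology on $E_\infty^i$ really is the projective-limit (initial) topology rather than something finer or coarser: this is where one uses that $E_\infty^i$ was defined as a subset of the honest product $\tE^0\times\tE^1\times\tE^1\times\cdots$ with the subspace topology, and that the truncation maps $\tE_\infty^i\to\tE_k^i$ are continuous with the single point $(\infty,\infty,\ldots)$ over $(\infty,\ldots,\infty)$, which is what was verified when identifying $\tE_\infty^i$ as the one-point compactification. Granting that identification, the universal property is formal, and the local homeomorphism property of $d_\infty$ (needed only to know $E_\infty$ is a topological graph at all, already established) plays no further role. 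I would therefore present the proof as: (1) the $m_{k,\infty}$ form a cone; (2) existence of the mediating morphism by stabilization of coordinates; (3) continuity/properness via the initial-topology characterization; (4) uniqueness.
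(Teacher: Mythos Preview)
Your proposal is correct and is precisely the routine verification the paper alludes to; the paper itself provides no proof beyond the remark ``It is routine to check the following.'' Your careful attention to the topology (identifying the subspace topology from the product with the initial topology from the truncations, via the one-point-compactification argument) is the only point requiring thought, and you handle it appropriately.
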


For $k \in \N$, 
the natural factor map $m_{k,\infty} = (m_{k,\infty}^0, m_{k,\infty}^1)$ 
from $E_\infty$ to $E_k$ 
coming from Proposition~\ref{Prop:lim} is regular and surjective, 
and can be expressed as 
\begin{align*}
m_{k,\infty}^0\big( (v,\, e_1, e_2, \ldots) \big)
& = (v,\, e_1, e_2, \ldots, e_k) \\
m_{k,\infty}^1 \big( (e_0, e_1, e_2, \ldots) \big)
& = (e_0, e_1, e_2, \ldots, e_k). 
\end{align*}
We denote by $m_\infty$ 
the factor map
$m_{0,\infty}$ from $E_\infty$ to $E = E_0$.

\begin{proposition}\label{Prop:WhenSGDS}
For a topological graph $E$,
the following coonditions are equivalent:
\begin{enumerate}
\rom
\item $E$ is an SGDS, 
\item the factor map $m_1$ from $E_1$ to $E$ is an isomorphism, 
\item the factor map $m_k$ from $E_k$ to $E$ is an isomorphism 
for some $k\geq 1$, 
\item the factor map $m_k$ from $E_k$ to $E$ is an isomorphism 
for every $k\in \N$, 
\item the factor map $m_\infty$ from $E_\infty$ to $E$ is an isomorphism, 
\end{enumerate}
\end{proposition}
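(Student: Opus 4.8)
The plan is to prove the following implications, which among them close up to give all the equivalences: (i)$\Leftrightarrow$(ii), then (i)$\Rightarrow$(iv), (iv)$\Rightarrow$(iii), (iii)$\Rightarrow$(ii), (iv)$\Rightarrow$(v) and (v)$\Rightarrow$(i). Only (i)$\Leftrightarrow$(ii) carries real content; the remaining steps are formal manipulations of the factor maps $m_{k,l}$ and $m_{k,\infty}$ together with Proposition~\ref{Prop:k+j}.

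For (i)$\Leftrightarrow$(ii) I would use the structure uncovered in the proof of Lemma~\ref{Lem:E1rg}. There it is shown that $r_1=\iota\circ m$, where $\iota\colon E^1\to(E_1^0)_{\rs{rg}}$ is a homeomorphism onto the open subset $(E_1^0)_{\rs{rg}}$ of $E_1^0$ and $m\colon E_1^1\to E^1$, $(e',e)\mapsto e'$, is a proper continuous surjection; moreover $m=m_1^1$. Hence $E_1$ is an SGDS if and only if $r_1$ is a homeomorphism onto an open set, if and only if $m$ is a homeomorphism, if and only if $m_1^1$ is injective. This gives (ii)$\Rightarrow$(i): if $m_1$ is an isomorphism then $m_1^1$ is injective, so $E_1$ is an SGDS, and since $m_1$ identifies $E$ with $E_1$, so is $E$. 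For (i)$\Rightarrow$(ii): when $E$ is an SGDS, $r$ is a homeomorphism of $E^1$ onto the open set $r(E^1)=\s{E}{0}{rg}$, so $\s{E}{0}{sg}=E^0\setminus r(E^1)$; inspecting the defining conditions of $E_1^0$ and $E_1^1$ then shows that $m_1^0$ and $m_1^1$ are bijective, and, being proper continuous maps between locally compact Hausdorff spaces, they are homeomorphisms, so $m_1$ is an isomorphism.

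With (i)$\Leftrightarrow$(ii) in hand, (i)$\Rightarrow$(iv) follows by induction on $k$: by Proposition~\ref{Prop:k+j}, $E_{k+1}\cong(E_k)_1$ and $m_{k,k+1}$ is the ``$m_1$'' map of the graph $E_k$, so if $E_k$ is an SGDS then (i)$\Rightarrow$(ii) applied to $E_k$ makes $m_{k,k+1}$ an isomorphism and $E_{k+1}\cong E_k$ an SGDS again; starting from $E_0=E$, every $m_{k,k+1}$ is an isomorphism, hence so is $m_k=m_{0,1}\circ m_{1,2}\circ\cdots\circ m_{k-1,k}$. The implication (iv)$\Rightarrow$(iii) is trivial (take $k=1$), and for (iii)$\Rightarrow$(ii) I would factor $m_k=m_{0,1}\circ m_{1,k}$, with $m_{1,k}\colon E_k\to E_1$ the regular surjective factor map from Section~\ref{Sec:iterating}: if $m_k$ is an isomorphism then $m_{1,k}^0$ and $m_{1,k}^1$ are injective, hence (being also surjective and proper) homeomorphisms, so $m_{1,k}$ is an isomorphism and $m_1=m_k\circ m_{1,k}^{-1}$ is one too.

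Finally, for (iv)$\Rightarrow$(v): from $m_{0,\infty}=m_{0,k}\circ m_{k,\infty}$ and the injectivity of every $m_{0,k}^i$ ($i=0,1$), two elements of $E_\infty^i$ with equal image under $m_\infty^i$ have equal image under every $m_{k,\infty}^i$, hence coincide (an element of $E_\infty^i$ is determined by all these images); thus $m_\infty^0$ and $m_\infty^1$ are injective, and since they are also surjective and proper they are homeomorphisms, so $m_\infty$ is an isomorphism. The last implication (v)$\Rightarrow$(i) is immediate: an isomorphism $E_\infty\cong E$ transports to $E$ the SGDS structure of $E_\infty$ provided by Theorem~\ref{Thm:Einfty=SGDS}. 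The only genuinely non-formal step is (i)$\Leftrightarrow$(ii); within it, the mildly delicate point is the preimage count showing $m_1^0$ and $m_1^1$ are bijective when $E$ is an SGDS, but I do not expect a real obstacle there.
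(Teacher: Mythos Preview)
Your proof is correct and follows essentially the same approach as the paper: the substantive step (i)$\Rightarrow$(ii) via the bijectivity of $m_1^0$ and $m_1^1$ is the same, and the remaining implications are handled by the same formal manipulations of the $m_{k,l}$ and Proposition~\ref{Prop:k+j}. The only minor organizational difference is that you supply a direct argument for (ii)$\Rightarrow$(i) (showing $E_1$ is an SGDS via injectivity of $m_1^1$ and transporting along $m_1$), whereas the paper instead closes the cycle through (ii)$\Rightarrow$(iv)$\Rightarrow$(v)$\Rightarrow$(i).
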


\begin{proof}
(i)$\Rightarrow$(ii): 
Suppose $E$ is an SGDS. 
We show that the surjective proper map $m_1^0$ is injective. 
To do so, take $v \in E^0$ and $e,e' \in \tE^1$ with $(v,e),(v,e')\in E_1^0$. 
The goal is to show $e=e'$. 
First consider the case $v \in r(E^1)$. 
Since $r(E^1) = \s{E}{0}{rg}$, we have $e,e' \in E^1$ and $r(e)=r(e')=v$. 
Since $r$ is injective, we have $e=e'$. 
Next consider the case $v \notin r(E^1)$. 
In this case, $e=e'=\infty$. 
Thus we have shown that $m_1^0$ is injective. 
Injectivity of $m_1^1$ can be shown similarly. 
These show that the factor map $m_1$ is an isomorphism. 

(ii)$\Rightarrow$(iii): This is trivial. 

(iii)$\Rightarrow$(ii): 
For some $k\geq 1$, 
the factor map $m_k$ is the composition of 
the surjective factor map $m_{1,k}$ and 
the factor map $m_1$. 
Hence when $m_k$ is an isomorphism, 
$m_1$ is also an isomorphism. 

(ii)$\Rightarrow$(iv): 
Suppose $m_1$ is an isomorphism. 
Then $m_{1,2}$ is also an isomorphism 
because it is $m_1$ for $E_1$ which is isomorphic to $E$. 
Repeating this argument, we will see that $m_{k,k+1}$ is an isomorphism 
for every $k\in \N$. 
Therefore $m_k=m_{0,1}\circ m_{1,2}\circ \cdots \circ m_{k-1,k}$ is 
also an isomorphism. 

(iv)$\Rightarrow$(v): 
This is an easy fact on projective limit. 

(v)$\Rightarrow$(i): This follows from Theorem~\ref{Thm:Einfty=SGDS}. 
\end{proof}

\begin{corollary}
For a topological graph $E$,
$(E_\infty)_k$ for $k \in \N$ and $(E_\infty)_\infty$ 
are naturally isomorphic to $E_\infty$. 
\end{corollary}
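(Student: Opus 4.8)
The plan is simply to feed the topological graph $E_\infty$ into Proposition~\ref{Prop:WhenSGDS}. By Theorem~\ref{Thm:Einfty=SGDS}, $E_\infty$ is an SGDS, so condition~(i) of Proposition~\ref{Prop:WhenSGDS} is satisfied when the graph called $E$ there is taken to be $E_\infty$. Invoking the implications (i)$\Rightarrow$(iv) and (i)$\Rightarrow$(v) of that proposition then gives at once that the factor map $m_k$ from $(E_\infty)_k$ to $E_\infty$ is an isomorphism for every $k \in \N$, and that the factor map $m_\infty$ from $(E_\infty)_\infty$ to $E_\infty$ is an isomorphism.

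These factor maps are precisely the canonical ones constructed in Sections~\ref{Sec:iterating} and~\ref{Sec:infinite} (now applied with $E_\infty$ in place of $E$), so the isomorphisms $(E_\infty)_k \cong E_\infty$ and $(E_\infty)_\infty \cong E_\infty$ are natural in the intended sense; no further argument is needed.

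The only point deserving a remark, rather than an obstacle, is that one should make explicit that applying the machinery of the previous sections to $E_\infty$ is legitimate — i.e., that the dual-graph and projective-limit constructions, and Proposition~\ref{Prop:WhenSGDS} in particular, were established for an \emph{arbitrary} topological graph, so nothing prevents us from taking that graph to be $E_\infty$. With that observed, the corollary is immediate.
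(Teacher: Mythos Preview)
Your proof is correct and is exactly the paper's own argument: the paper's proof reads ``This follows from Theorem~\ref{Thm:Einfty=SGDS} and Proposition~\ref{Prop:WhenSGDS},'' which is precisely what you spell out.
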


\begin{proof}
This follows from Theorem~\ref{Thm:Einfty=SGDS}
and Proposition~\ref{Prop:WhenSGDS}. 
\end{proof}


\section{$C^*$-algebras}\label{Sec:C*alg}

In this section, 
we show that topological graphs constructed in 
Sections~\ref{Sec:dual}, {Sec:iterating} and {Sec:infinite}
define \CA s naturally isomorphic to the \Ca $\cO(E)$ 
of the original topological graph $E$. 

\begin{theorem}\label{Thm:isom1}
The injective \shom $\mu\colon \cO(E)\to \cO(E_1)$ 
induced by the factor map $m_1=(m_1^0, m_1^1)$ 
as in \cite[Proposition~2.9]{KaII} 
is an isomorphism. 
\end{theorem}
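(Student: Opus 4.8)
The plan is to show that the \shom $\mu\colon\cO(E)\to\cO(E_1)$ induced by the regular surjective factor map $m_1$ is surjective; injectivity is already guaranteed by \cite[Proposition~2.9]{KaII} together with regularity of $m_1$ (Proposition~\ref{Prop:factormap}). Recall that $\cO(E_1)$ is generated by the images $t^0_{E_1}(f)$ for $f\in C_0(E_1^0)$ and $t^1_{E_1}(\xi)$ for $\xi\in C_d(E_1^1)$, while $\mu$ sends $t^0_E(g)$ and $t^1_E(\eta)$ to $t^0_{E_1}(g\circ m_1^0)$ and $t^1_{E_1}(\eta\circ m_1^1)$ respectively. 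So the crux is to express an arbitrary generator of $\cO(E_1)$ in terms of these pulled-back elements. By the Remark after Lemma~\ref{Lem:E1rg}, $E_1^0$ decomposes as the open set $(E_1^0)_{\rs{rg}}\cong E^1$ together with the closed set $(E_1^0)_{\rs{sg}}\cong\s{E}{0}{sg}$, and $m_1^0$ restricted to $(E_1^0)_{\rs{sg}}$ is exactly the embedding of $\s{E}{0}{sg}$ into $E^0$ followed by $v\mapsto v$. This suggests a two-piece strategy: handle functions supported on the singular part via $m_1^0$ directly, and handle functions (and sections) supported on the regular part $\cong E^1$ via the Cuntz--Krieger relation.

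The key computation I would carry out is this. The Cuntz--Krieger relation for $\cO(E)$ says that for $f\in C_0(\s{E}{0}{rg})$ one has $t^0_E(f)=\sum t^1_E(\eta_i)t^1_E(\zeta_i)^*$ whenever $\sum\theta_{\eta_i,\zeta_i}=\pi_r(f)$ in $\cK(C_d(E^1))$; concretely, locally on a bisection $U\subset E^1$ where $d$ is a homeomorphism, a function on $E^1$ supported in $U$ is a rank-one ``coefficient'' of $C_d(E^1)$. Pulling this through $\mu$, I expect $\mu(t^1_E(\eta)t^1_E(\zeta)^*)$ to realize, inside $\cO(E_1)$, precisely the operator $t^0_{E_1}$ of the corresponding function on $(E_1^0)_{\rs{rg}}\cong E^1$. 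In other words, $\mu\bigl(C_0(\s{E}{1}{rg})\cdot(\text{rank-one pieces})\bigr)$ together with $\mu(t^0_E(C_0(\s{E}{0}{sg})))$ should recover all of $t^0_{E_1}(C_0(E_1^0))$. Once every $t^0_{E_1}(f)$ is in the image of $\mu$, I would recover the sections $t^1_{E_1}(\xi)$: since $d_1\colon E_1^1\to E_1^0$ is a local homeomorphism and $E_1^1$ sits inside $E^1\times\tE^1$ fibred over $E_1^0$ via $r_1$, one can write $\xi\in C_d(E_1^1)$ as a (locally finite) sum $\sum t^0_{E_1}(f_j)\,t^1_{E_1}(\xi_j)$ where each $\xi_j$ is a pull-back $\eta_j\circ m_1^1$ for suitable $\eta_j\in C_d(E^1)$ supported on a bisection; this uses the explicit description $r_1(e',e)=(r(e'),e')$ and $d_1(e',e)=(d(e'),e)$, so locally the $\tE^1$-coordinate $e$ of a point of $E_1^1$ is determined by its image $e'$ under $m_1^1$ together with a bit of data on $E_1^0$.

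The main obstacle I anticipate is the bookkeeping at the boundary between the regular and singular parts of $E_1^0$, i.e.\ near points $(v,\infty)$ with $v$ in the closure of $\s{E}{0}{rg}$ inside $\s{E}{0}{sg}$: one must check that the local rank-one decompositions on $(E_1^0)_{\rs{rg}}\cong E^1$ can be chosen to vanish appropriately at $\infty$ so that they glue with functions coming from $C_0(\s{E}{0}{sg})$ to give genuine elements of $C_0(E_1^0)$, and that the resulting sums converge in $\cO(E_1)$. Proposition~\ref{Prop:E01cpt}, which identifies the one-point compactification of $E_1^0$ with the compact set $\tE_1^0$, is exactly what controls this: a function on $E_1^0$ extends continuously to $\tE_1^0$ vanishing at $(\infty,\infty)$, and this is what lets me split it cleanly. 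I would also double-check that the rank-one coefficient computation is compatible with the left action $\pi_{r_1}$, using $r(m_1^1(e',e))=m_1^0(r_1(e',e))$; modulo these verifications, surjectivity of $\mu$ follows, and combined with the known injectivity this gives that $\mu$ is an isomorphism.
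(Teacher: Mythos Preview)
Your overall strategy is the same as the paper's: prove surjectivity by splitting $C_0(E_1^0)$ along the decomposition $(E_1^0)_{\rs{rg}}\cong E^1$ versus $(E_1^0)_{\rs{sg}}\cong\s{E}{0}{sg}$, handle the singular piece via $m_1^0$, and handle the regular piece via the Cuntz--Krieger relation (the paper phrases this as the identity $t_1^0(h\circ m_1^1)=\mu(\psi_{t^1}(\pi_r(h)))$ for $h\in C_0(E^1)$, coming from \cite[Proposition~2.5]{KaII}). Your sketch of the $t_1^0$ part is correct.

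There is, however, a genuine slip in your treatment of $t_1^1(C_{d_1}(E_1^1))$. You propose to write $t_1^1(\xi)=\sum t_1^0(f_j)\,t_1^1(\eta_j\circ m_1^1)$, i.e.\ to use the \emph{left} action, but $r_1(e',e)=(r(e'),e')$ does not see the coordinate $e$ at all; hence $\pi_{r_1}(f)(\eta\circ m_1^1)$ is again a function of $e'$ alone, and no left-linear combination of pullbacks can reach a section that genuinely depends on $e$. The fix is to multiply on the \emph{right}: since $d_1(e',e)=(d(e'),e)$, the right action does recover $e$, and indeed $t_1^1(f\circ m_1^1)\,t_1^0(\tilde g)=t_1^1(h)$ when $h(e',e)=f(e')g(e)$ and $\tilde g\in C_0(E_1^0)$ is $g$ viewed through $E^1\cong(E_1^0)_{\rs{rg}}$. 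This is exactly what the paper does. The paper also organises the $t_1^1$ step a bit more cleanly than your ``locally finite sum'': it first uses the short exact sequence $0\to C_{d_1}(E^2)\to C_{d_1}(E_1^1)\to C_d(\s{E}{1}{sg})\to 0$ and the surjectivity of $C_d(E^1)\to C_d(\s{E}{1}{sg})$ to split an arbitrary $\xi$ as $\eta\circ m_1^1+\zeta$ with $\zeta\in C_{d_1}(E^2)$, and then handles $\zeta$ by the product formula above and a density argument. This sidesteps the boundary bookkeeping you were worried about.
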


\begin{proof}
Since $m_1^0$ is surjective, 
$\mu$ is injective. 
To show that $\mu$ is isomorphic 
it suffices to show that its image 
contains the generators $t_1^0(C_0(E_1^0))$ 
and $t_1^1(C_{d_1}(E_1^1))$ of $\cO(E_1)$ 
where $(t_1^0, t_1^1)$ is the universal Cuntz-Krieger $E_1$-pair. 
For the universal Cuntz-Krieger $E$-pair $(t^0, t^1)$, 
the map $\mu$ satisfies $\mu(t^0(g))=t_1^0(g\circ m_1^0)$ and  
$\mu(t^1(\eta))=t_1^1(\eta\circ m_1^1)$ for $g \in C_0(E^0)$ and 
$\eta \in C_d(E^1)$. 

Take $f \in C_0(E_1^0)$. 
We can find $g \in C_0(E^0)$ such that 
$g(v)=f(v,\infty)$ for all $v \in \s{E}{0}{sg}$. 
Then we have (unique) $h \in C_0(E^1)$ 
such that $h \circ m_1^1= f - g \circ m_1^0$. 
By Lemma \ref{Lem:E1rg}, 
we have $h \circ m_1^1 \in C_0((E_1^0)_{\rs{rg}})$. 
Hence we get 
\[
t_1^0(h \circ m_1^1)=\psi_{t_1^1}\big(\pi_{r_1}(h \circ m_1^1)\big)
=\psi_{t_1^1}\big(\psi(\pi_r(h))\big)
=\mu\big(\psi_{t^1}(\pi_r(h))\big)
\]
by \cite[Proposition~2.5]{KaII} (see \cite{KaII} for the notation). 
Therefore 
\[
t_1^0(f) = t_1^0(g \circ m_1^0)+t_1^0(h \circ m_1^1) 
= \mu(t^0(g))+\mu\big(\psi_{t^1}(\pi_r(h))\big)
\]
is in the image of $\mu$. 

Next we see that $t_1^1(C_{d_1}(E^2))$ is in the image of $\mu$. 
Take $f,g \in C_c(E^1)$ and define $h \in C_c(E^2)$ 
by $h(e,e')=f(e)g(e')$. 
We have $t_1^1(h)=t_1^1(f\circ m_1^1)t_1^0(g)$
where $g$ is considered as an element in $C_0(E_1^0)$ 
through the map $E^1\ni e \mapsto (r(e),e) \in E_1^0$ 
which is homeomorphic to an open subset. 
By the fact $t_1^1(f\circ m_1^1) = \mu(t^1(f))$ 
and the former part of this proof, 
we have that $t_1^1(h)$ is in the image of $\mu$. 
Since a function such as $h$ spans a dense subset of 
$C_c(E^2)$ with respect to the supremum norm, 
$t_1^1(C_{d_1}(E^2))$ is in the image of $\mu$ by \cite[Lemma~1.26]{KaI}.

Now take $\xi \in C_{d_1}(E_1^1)$, 
and we will show $t_1^1(\xi)$ is in the image of $\mu$. 
We have a surjection $C_{d_1}(E_1^1) \to C_{d}(\s{E}{1}{sg})$ 
induced by the map $\s{E}{1}{sg} \ni e\mapsto (e,\infty)\in E_1^1$
whose kernel is identified with $C_{d_1}(E^2)$. 
The composition of the map 
$C_d(E^1) \ni \eta \mapsto \eta \circ m_1^1 \in C_{d_1}(E_1^1)$ 
and $C_{d_1}(E_1^1) \to C_{d}(\s{E}{1}{sg})$ 
is the natural restriction map $C_d(E^1) \to C_{d}(\s{E}{1}{sg})$
which is surjective by \cite[Lemma~1.11]{KaI}. 
Therefore there exist 
$\eta \in C_d(E^1)$ and $\zeta \in C_{d_1}(E^2)$ 
such that $\xi = \eta \circ m_1^1 + \zeta$. 
As we have seen above, $t_1^1(\zeta)$ is 
in the image of $\mu$. 
Since $t_1^1(\eta\circ m_1^1) = \mu(t^1(\eta))$ is 
also in the image of $\mu$, 
$t_1^1(\xi)$ is 
in the image of $\mu$. 
This completes the proof. 
\end{proof}

\begin{theorem}\label{Thm:isomEk}
For each $k \in \N$, 
the injective \shom $\mu\colon \cO(E)\to \cO(E_k)$ 
induced by the factor map $m_k=(m_k^0,m_k^1)$ 
is an isomorphism. 
\end{theorem}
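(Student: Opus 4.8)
The plan is to reduce the statement for arbitrary $k$ to the case $k=1$ already established in Theorem~\ref{Thm:isom1}, exploiting the factorization of the factor maps together with the identification $(E_k)_j \cong E_{k+j}$ provided by Proposition~\ref{Prop:k+j}. First I would recall that $m_k = m_{0,1} \circ m_{1,2} \circ \cdots \circ m_{k-1,k}$, so the induced \shom $\mu\colon \cO(E) \to \cO(E_k)$ is the composition of the \shoms induced by each $m_{j,j+1}$. Hence it suffices to prove that each such $\mu_j\colon \cO(E_j) \to \cO(E_{j+1})$ is an isomorphism. By Proposition~\ref{Prop:k+j}, under the natural isomorphism $(E_j)_1 \cong E_{j+1}$ the factor map $m_{j,j+1}$ from $E_{j+1}$ to $E_j$ corresponds to the dual-graph factor map $m_1$ for the topological graph $E_j$ (in the notation of Section~\ref{Sec:dual}, applied to $E_j$ in place of $E$). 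Therefore Theorem~\ref{Thm:isom1}, applied with $E_j$ as the input topological graph, tells us that $\mu_j$ is an isomorphism.

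Assembling these, $\mu = \mu_{k-1} \circ \cdots \circ \mu_1 \circ \mu_0$ is a composition of isomorphisms, hence an isomorphism. One should check that the \shom induced by a composition of factor maps is the composition of the induced \shoms and that the isomorphism $(E_j)_1 \cong E_{j+1}$ of Proposition~\ref{Prop:k+j} intertwines the relevant induced maps on the \CA\ level; both are routine functoriality statements about the construction in \cite[Proposition~2.9]{KaII}. An alternative, essentially equivalent, route would be a direct induction on $k$: assuming $\mu\colon \cO(E) \to \cO(E_k)$ is an isomorphism, compose with the isomorphism $\cO(E_k) \to \cO((E_k)_1) = \cO(E_{k+1})$ coming from Theorem~\ref{Thm:isom1} applied to $E_k$, and verify the composite is the \shom induced by $m_{k+1}$.

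The main obstacle, such as it is, is purely bookkeeping: one must be careful that the identifications in Proposition~\ref{Prop:k+j} genuinely send the factor map $m_{k,k+1}$ to the ``$m_1$'' of $E_k$, and that the functoriality of \cite[Proposition~2.9]{KaII} is compatible with the isomorphisms $(E_k)_1 \cong E_{k+1}$ at the level of Cuntz--Krieger pairs. This is exactly the style of argument used in the proof of Proposition~\ref{Prop:WhenSGDS} (steps (ii)$\Rightarrow$(iv)), so it should be stated briefly rather than belabored. No new hard analysis is needed beyond Theorem~\ref{Thm:isom1}.
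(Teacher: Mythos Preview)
Your proposal is correct and follows essentially the same approach as the paper: the paper's proof notes that the dual graph of $E_k$ is $E_{k+1}$ (i.e., Proposition~\ref{Prop:k+j} with $j=1$), applies Theorem~\ref{Thm:isom1} to each $E_k$ to get that $\cO(E_k)\to\cO(E_{k+1})$ is an isomorphism, and then observes that $\mu$ is the composition of these maps. The bookkeeping points you flag (functoriality of the induced \shoms under composition and under the identification of Proposition~\ref{Prop:k+j}) are exactly the routine verifications the paper leaves implicit.
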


\begin{proof}
For each $k \in \N$, 
the dual graph of $E^k$ is $E^{k+1}$. 
Hence by Proposition~\ref{Thm:isom1}, 
the natural map $\cO(E_k)\to \cO(E_{k+1})$ is an isomorphism. 
Since the injective \shom $\mu\colon \cO(E)\to \cO(E_k)$ 
is a composition of these maps, 
it is an isomorphism. 
\end{proof}

\begin{theorem}\label{Thm:isomEinf}
The injective \shom $\mu\colon \cO(E)\to \cO(E_\infty)$ 
induced by the factor map $m_\infty=(m_\infty^0,m_\infty^1)$ 
is an isomorphism. 
\end{theorem}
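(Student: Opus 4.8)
The plan is to recognize $\cO(E_\infty)$ as an inductive limit whose connecting maps are all isomorphisms. By Proposition~\ref{Prop:lim}, $E_\infty$ is the projective limit of the regular surjective projective system $((E_k)_{k},(m_{k,l})_{k,l})$, so for each $k$ the factor map $m_{k,\infty}\colon E_\infty\to E_k$ induces a \shom $\nu_k\colon\cO(E_k)\to\cO(E_\infty)$ as in \cite[Proposition~2.9]{KaII}; writing $\mu_{k,l}\colon\cO(E_k)\to\cO(E_l)$ for the \shom induced by $m_{k,l}$, functoriality of the induced-homomorphism construction together with $m_{k,l}\circ m_{l,\infty}=m_{k,\infty}$ gives $\nu_l\circ\mu_{k,l}=\nu_k$, so the $(\cO(E_k),\mu_{k,l})$ form an inductive system with $\mu=\nu_0$ as the map out of its zeroth term. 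The first step, then, is to invoke the continuity of $E\mapsto\cO(E)$ along projective limits of topological graphs with regular surjective factor maps --- the expected consequence of the projective-limit machinery in \cite[Section~4]{KaII} --- which identifies $\cO(E_\infty)$ with $\varinjlim_{k}\cO(E_k)$ so that $\nu_k$ becomes the structure map at level $k$.

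Granting this, the rest is short. Each connecting \shom $\mu_{k,k+1}$ is an isomorphism: by Proposition~\ref{Prop:k+j}, under the identification $(E_k)_1\cong E_{k+1}$ the map $m_{k,k+1}$ is the factor map $m_1$ of the topological graph $E_k$, so Theorem~\ref{Thm:isom1} applied with $E_k$ in place of $E$ makes $\mu_{k,k+1}$ an isomorphism --- this is precisely the observation underlying the proof of Theorem~\ref{Thm:isomEk}. Hence every $\mu_{k,l}$ with $k\le l$ is an isomorphism, and an inductive limit of a system all of whose connecting maps are isomorphisms is canonically isomorphic, via any of its structure maps, to the corresponding term. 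Taking the zeroth term, this says exactly that $\mu=\nu_0\colon\cO(E)\to\cO(E_\infty)$ is an isomorphism.

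The main obstacle is the first step: pinning down the precise continuity statement and checking its hypotheses (all $m_{k,l}$ are regular surjective, by Section~\ref{Sec:iterating}) and that its identification $\cO(E_\infty)\cong\varinjlim_{k}\cO(E_k)$ carries the abstract structure maps to the concrete $\nu_k$, so that the $\mu$ of the statement is genuinely the level-$0$ structure map. If such a theorem is unavailable in the form needed, I would instead prove surjectivity of $\mu$ directly: $\mu$ is injective since $m_\infty^0$ is surjective; because $\mu_{0,k}$ is an isomorphism by Theorem~\ref{Thm:isomEk} and $\mu=\nu_k\circ\mu_{0,k}$, the image of each $\nu_k$ equals $\mu(\cO(E))$, which is therefore closed, being the image of a \shom between \CA s; and since $C_0(E_\infty^0)=\varinjlim_{k}C_0(E_k^0)$, with the analogous cylinder-function density holding in the Hilbert module $C_{d_\infty}(E_\infty^1)$ (both because $E_\infty^i=\varprojlim_{k}E_k^i$ along proper surjections), the generators $t_\infty^0(C_0(E_\infty^0))$ and $t_\infty^1(C_{d_\infty}(E_\infty^1))$ of $\cO(E_\infty)$ all lie in $\overline{\bigcup_{k}\nu_k(\cO(E_k))}=\mu(\cO(E))$; hence $\mu$ is onto. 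Everything else is bookkeeping already carried out in Propositions~\ref{Prop:k+j}--\ref{Prop:lim} and Theorems~\ref{Thm:isom1}--\ref{Thm:isomEk}.
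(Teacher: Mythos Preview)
Your proposal is correct and follows essentially the same approach as the paper: the paper's proof is the single line ``This follows from Theorem~\ref{Thm:isomEk},'' which implicitly uses exactly the projective/inductive limit correspondence from \cite[Section~4]{KaII} that you spell out, together with the fact that all the connecting maps $\mu_{0,k}$ are isomorphisms. Your write-up simply makes explicit the continuity step the paper takes for granted, and your fallback density argument is a sound alternative in case one wants to avoid citing that machinery.
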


\begin{proof}
This follows from Theorem~\ref{Thm:isomEk}. 
\end{proof}

\begin{theorem}\label{Thm:graph=SGDS}
For a topological graph $E$, the \Ca $\cO(E)$ is 
isomorphic to the \Ca $C^*(E_\infty^0,\sigma)$ 
of the SGDS $(E_\infty^0,\sigma)$ where $\dom(\sigma)=r_\infty(E_\infty^1)$ 
and $\sigma = d_\infty \circ r_\infty^{-1}$. 
\end{theorem}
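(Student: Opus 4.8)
The plan is to assemble pieces already established. By Theorem~\ref{Thm:Einfty=SGDS}, the map $r_\infty\colon E_\infty^1\to E_\infty^0$ is a homeomorphism onto its image, and by the remark following that theorem this image equals the open set $(E_\infty^0)_{\rs{rg}}$. First I would set $X:=E_\infty^0$, $\dom(\sigma):=r_\infty(E_\infty^1)$, and $\sigma:=d_\infty\circ r_\infty^{-1}\colon\dom(\sigma)\to E_\infty^0$. Since $d_\infty$ is a local homeomorphism and $r_\infty^{-1}$ is a homeomorphism, $\sigma$ is a local homeomorphism from the open set $\dom(\sigma)$ onto $d_\infty(E_\infty^1)$, and the latter is open because local homeomorphisms are open maps. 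Hence $(E_\infty^0,\sigma)$ is a genuine SGDS.

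Next I would invoke the lemma in Section~\ref{Sec:topgraph} characterizing which topological graphs arise from an SGDS: since $r_\infty$ is a homeomorphism onto an open subset of $E_\infty^0$, the topological graph $E_\infty$ is isomorphic to the topological graph associated with the SGDS $(E_\infty^0,\sigma)$; indeed, the recipe $\sigma=d_\infty\circ r_\infty^{-1}$ used above is precisely the one in the proof of that lemma. Applying Proposition~\ref{Prop:SGDS=graph} to this SGDS then yields a natural isomorphism $C^*(E_\infty^0,\sigma)\cong\cO(E_\infty)$.

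Finally, Theorem~\ref{Thm:isomEinf} supplies an isomorphism $\cO(E)\cong\cO(E_\infty)$ induced by the factor map $m_\infty$. Composing this with the isomorphism of the previous step gives $\cO(E)\cong\cO(E_\infty)\cong C^*(E_\infty^0,\sigma)$, which is the assertion. There is no serious obstacle; the only points requiring a moment's care are that $\dom(\sigma)$ is open (which is the identification $r_\infty(E_\infty^1)=(E_\infty^0)_{\rs{rg}}$ recorded after Theorem~\ref{Thm:Einfty=SGDS}) and that $\mathrm{ran}(\sigma)$ is open (which follows from the openness of the local homeomorphism $d_\infty$).
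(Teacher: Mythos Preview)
Your argument is correct and follows essentially the same route as the paper, which simply cites Theorem~\ref{Thm:isomEinf} and Proposition~\ref{Prop:SGDS=graph}. You have merely unpacked the implicit identification of $E_\infty$ with the topological graph of the SGDS $(E_\infty^0,\sigma)$ and added the verification that $(E_\infty^0,\sigma)$ is indeed an SGDS, both of which the paper treats as already established by Theorem~\ref{Thm:Einfty=SGDS} and the surrounding discussion.
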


\begin{proof}
This follows from Theorem~\ref{Thm:isomEinf} and 
Proposition~\ref{Prop:SGDS=graph}. 
\end{proof}

\begin{corollary}\label{Cor:2class}
The class of C*-algebras of topological graphs coincides with 
the one of SGDSs. 
\end{corollary}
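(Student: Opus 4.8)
The plan is to establish the equality of the two classes by proving the two set-theoretic inclusions separately, each of which has already been carried out in the body of the paper, so that the corollary becomes a short bookkeeping argument.

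First I would handle the inclusion of the class of C*-algebras of SGDSs into the class of C*-algebras of topological graphs. Given an SGDS $(X,\sigma)$, form the topological graph $E = (X, \dom(\sigma), \sigma, r)$ with $r$ the natural embedding, exactly as described just before Proposition~\ref{Prop:SGDS=graph}. By that proposition, $C^*(X,\sigma)$ is naturally isomorphic to $\cO(E)$, so every C*-algebra arising from an SGDS is (isomorphic to) the C*-algebra of a topological graph.

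For the reverse inclusion I would simply invoke Theorem~\ref{Thm:graph=SGDS}: given an arbitrary topological graph $E$, the SGDS $(E_\infty^0,\sigma)$ with $\dom(\sigma)=r_\infty(E_\infty^1)$ and $\sigma=d_\infty\circ r_\infty^{-1}$ satisfies $\cO(E)\cong C^*(E_\infty^0,\sigma)$. Hence every C*-algebra of a topological graph is (isomorphic to) the C*-algebra of an SGDS. Combining the two inclusions yields the asserted equality of classes.

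There is no genuine obstacle here; the corollary is a formal consequence of Proposition~\ref{Prop:SGDS=graph} and Theorem~\ref{Thm:graph=SGDS}. The only point that requires a little care is to fix what is meant by the two ``classes'' — most naturally, classes of $*$-isomorphism classes of C*-algebras — so that the statement is literally an equality of classes rather than merely a pair of mutual embeddings; once that convention is adopted, the argument above is complete.
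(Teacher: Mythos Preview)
Your proposal is correct and matches the paper's approach: the corollary is stated without an explicit proof precisely because it follows immediately from Proposition~\ref{Prop:SGDS=graph} (giving one inclusion) and Theorem~\ref{Thm:graph=SGDS} (giving the other), exactly as you outline. Your remark about interpreting the classes up to $*$-isomorphism is the natural convention and is implicit in the paper.
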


\section{Groupoid models}\label{Sec:groupoid}

Let $E=(E^0,E^1,d,r)$ be a topological graph. 
In this section, 
we investigate the groupoid model of $\cO(E)$. 

By Theorem~\ref{Thm:graph=SGDS}, 
the \Ca $\cO(E)$ is isomorphic to the \Ca $C^*(E_\infty^0,\sigma)$ 
of the SGDS $(E_\infty^0,\sigma)$ where $\dom(\sigma)=r_\infty(E_\infty^1)$ 
and $\sigma = d_\infty \circ r_\infty^{-1}$. 
The \Ca $C^*(E_\infty^0,\sigma)$ is, by definition, the groupoid \Ca 
of the \'etale groupoid 
\begin{align*}
G(E_\infty^0,\sigma)
:= \{(v,m-n,v')\in E_\infty^0 \times \Z \times E_\infty^0 \mid
m&,n \in \N,\ v \in \dom (\sigma^m),\\ 
& v' \in \dom (\sigma^n),\ 
\sigma^m(v)=\sigma^n(v')\}.
\end{align*}
See \cite{Re2} for the groupoid structure and the topology 
of $G(E_\infty^0,\sigma)$ which is called the Renault-Deaconu groupoid. 
We can express this groupoid $G(E_\infty^0,\sigma)$ using $d_\infty$ and 
$r_\infty$ instead of $\sigma$ as follows. 
For $m,n \in \N$, define $E_\infty(m,n)$ to be the set of pairs 
$(v,v')$ of two elements $v,v'\in E_\infty^0$ such that 
there exist $e_1,e_2,\ldots,e_m, e'_1,e'_2,\ldots,e'_n \in E_\infty^1$ 
satisfying
\begin{align*}
v&=r_\infty(e_1),\ d_\infty(e_1)=r_\infty(e_2),\ldots, 
d_\infty(e_{m-1})=r_\infty(e_m),\\
v'&=r_\infty(e'_1),\ d_\infty(e'_1)=r_\infty(e'_2),\ldots, 
d_\infty(e'_{n-1})=r_\infty(e'_n),\ 
d_\infty(e_m)=d_\infty(e'_n).
\end{align*}
Note that since $r_\infty$ is injective, 
the sequence of edges above is, if it exists, unique. 
For $n,m$ grater than $0$, 
a pair $(v,e_1, e_2, \ldots)$ and $(v', e'_1, e'_2, \ldots)$ of elements 
in $E_\infty^0$ is in $E_\infty(m,n)$ if and only if 
$e_{m-1},e_{n-1}' \neq \infty$ and 
\[
(d(e_{m-1}),e_m,e_{m+1},\ldots)=(d(e'_{n-1}),e'_n,e'_{n+1},\ldots).
\]
When either $m$ or $n$ is $0$, a similar discription of $E_\infty(m,n)$ 
is possible. 
Using these spaces $E_\infty(m,n)$, 
we have 
\begin{align*}
G(E_\infty^0,\sigma)
& = \{(v,m-n,v')\in E_\infty^0 \times \Z \times E_\infty^0 \mid
m,n \in \N,\ (v, v') \in E_\infty(m,n)\}.
\end{align*}

The unit space of the groupoid $G(E_\infty^0,\sigma)$ is identified with 
$E_\infty^0$. 
We are going to see that this set $E_\infty^0$ without topology 
can be considered as the set of negative orbits in $E$ 
as defined in \cite[Definition~4.6]{KaIII}. 

For $n=2,3,\ldots$, 
we define a space $E^n$ of {\em paths} with {\em length} $n$ by 
\[
E^n :=\{(e_1,e_2,\ldots,e_n)\in E^1\times E^1 \times \cdots\times E^1
\mid d(e_{k})=r(e_{k+1})\ (1\leq k\leq n-1)\}.
\]
We denote by $E^*$ the disjoint union $\bigcup_{n=0}^\infty E^n$. 
We define $d\colon E^* \to E^0$ by the identity on $E^0$, $d$ on $E^1$ and 
$d(\alpha):=d(e_n)$ for $\alpha=(e_1,e_2,\ldots,e_n)\in E^n$. 
We set $\s{E}{n}{sg}:=\{\alpha \in E^n\mid d(\alpha) \in \s{E}{0}{sg}\}$ 
for $n=0,1,\ldots$ and 
$\s{E}{*}{sg}:=\{\alpha \in E^*\mid d(\alpha) \in \s{E}{0}{sg}\}
=\bigcup_{n=0}^\infty \s{E}{n}{sg}$. 
We also define a space $E^\infty$ of {\em infinite paths} by 
\[
E^\infty :=\{(e_1,e_2,\ldots,e_n,\ldots)\in 
E^1\times E^1 \times \cdots\times E^1 \times \cdots
\mid d(e_{k})=r(e_{k+1})\ (1\leq k)\}.
\]

\begin{proposition}
The set $E_\infty^0$ is isomorphic as a set to the disjoint union 
$\s{E}{*}{sg} \cup E^\infty$. 
\end{proposition}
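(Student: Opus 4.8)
The plan is to unwind the definition of $E_\infty^0$ and match it term-by-term against the description of $\s{E}{*}{sg} \cup E^\infty$. Recall that an element of $E_\infty^0$ is a sequence $(v, e_1, e_2, \ldots)$ with $v \in \tE^0$, $e_i \in \tE^1$, subject to $(v,e_1) \in E_1^0$ and $(e_i, e_{i+1}) \in \tE_1^1$ for all $i \geq 1$. The key structural observation, coming from the definition of $\tE_1^1$, is that once some coordinate equals $\infty$, all later coordinates must equal $\infty$: indeed $(e_i, e_{i+1}) \in \tE_1^1$ with $e_i = \infty$ forces $e_{i+1} = \infty$ (the only element of $\tE_1^1$ with first coordinate $\infty$ is $(\infty,\infty)$, since $d(e') = r(e)$ cannot hold when $e = \infty$ unless we are in the $e=\infty$ clause, which demands $e'=\infty$ too). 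So there are exactly two cases for a genuine element of $E_\infty^0$: either no $e_i$ is $\infty$, or there is a smallest index $n \geq 1$ with $e_n = \infty$ (and then $e_n = e_{n+1} = \cdots = \infty$).

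First I would treat the ``no $\infty$'' case. Here all $e_i \in E^1$, the condition $(v,e_1)\in E_1^0$ reads $v = r(e_1)$, and $(e_i,e_{i+1})\in \tE_1^1$ (hence $\in E_1^1$) reads $d(e_i) = r(e_{i+1})$. Thus $(e_1, e_2, \ldots)$ is exactly an element of $E^\infty$, and $v = r(e_1)$ is determined by it; conversely every infinite path gives such an element. This sets up a bijection between the ``no $\infty$'' part of $E_\infty^0$ and $E^\infty$. Next I would treat the case where $n \geq 1$ is minimal with $e_n = \infty$. Then $e_1, \ldots, e_{n-1} \in E^1$, $v = r(e_1)$, and $d(e_i) = r(e_{i+1})$ for $1 \leq i \leq n-2$, so $\alpha := (e_1, \ldots, e_{n-1}) \in E^{n-1}$ (with the convention that this is a path of length $n-1 \geq 0$). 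The remaining constraints to decode are $(e_{n-1}, e_n) = (e_{n-1}, \infty) \in \tE_1^1$ when $n \geq 2$, which by the definition of $\tE_1^1$ says $e_{n-1} \in \tE^1 \setminus \s{E}{1}{rg}$, i.e.\ $e_{n-1} \in \s{E}{1}{sg}$, i.e.\ $d(e_{n-1}) \in \s{E}{0}{sg}$, i.e.\ $\alpha \in \s{E}{n-1}{sg}$; and when $n = 1$, the constraint $(v, \infty) \in E_1^0$ says $v \in E^0 \setminus \s{E}{0}{rg} = \s{E}{0}{sg}$, matching $\alpha \in \s{E}{0}{sg}$ (the empty path at $v$). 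In both subcases the data amounts precisely to a finite path $\alpha \in \s{E}{*}{sg}$, and the original sequence is recovered as $(r(e_1), e_1, \ldots, e_{n-1}, \infty, \infty, \ldots)$ (or $(v,\infty,\infty,\ldots)$ when $\alpha$ is empty).

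Finally I would assemble these two bijections into a single bijection $E_\infty^0 \to \s{E}{*}{sg} \cup E^\infty$ and note the correspondence is visibly inverse on both sides, so it is a set isomorphism. I do not expect a serious obstacle here; the only point requiring a little care is the bookkeeping of the boundary cases (the length-$0$ and length-$1$ situations, where the constraint defining membership shifts from a condition on an edge $e_{n-1}$ to a condition on the vertex $v$), and checking that the ``propagation of $\infty$'' claim is correctly extracted from the explicit description of $\tE_1^1$ in Proposition~\ref{Prop:E01cpt}. Since the proposition only asserts an isomorphism of bare sets, no continuity or topological matching is needed, which keeps the argument purely combinatorial.
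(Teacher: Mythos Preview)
Your proposal is correct and constructs the same explicit bijection as the paper, which simply writes down the maps $v \mapsto (v,\infty,\infty,\ldots)$ on $\s{E}{0}{sg}$, $(e_1,\ldots,e_n) \mapsto (r(e_1),e_1,\ldots,e_n,\infty,\infty,\ldots)$ on $\s{E}{n}{sg}$, and $(e_1,e_2,\ldots) \mapsto (r(e_1),e_1,e_2,\ldots)$ on $E^\infty$, and then asserts that these assemble to a bijection. Your added verification---decomposing an arbitrary element of $E_\infty^0$ according to the first appearance of $\infty$ and reading off the resulting constraints---is exactly what is needed to justify that terse assertion.
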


\begin{proof}
We define a map $\s{E}{0}{sg} \to E_\infty^0$ by 
$v\mapsto (v,\infty,\infty,\ldots,\infty,\ldots)$. 
For $n=1,2,\ldots$, 
we define a map $\s{E}{n}{sg} \to E_\infty^0$ 
by $(e_1,e_2,\ldots,e_n) \mapsto (r(e_1),e_1,e_2,\ldots,e_n,\infty,\ldots,\infty,\ldots)$. 
We also define a map $E^\infty \to E_\infty^0$ 
by $(e_1,e_2,\ldots,e_n,\ldots) \mapsto (r(e_1),e_1,e_2,\ldots,e_n,\ldots)$. 
These maps induce the bijection. 
\end{proof}

The set $\s{E}{*}{sg} \cup E^\infty$ is nothing but the 
set of negative orbits in $E$ as defined in \cite[Definition~4.6]{KaIII}. 
By a similar map, 
we get a bijection between $E_\infty^1$ and the disjoint union 
$\bigcup_{n=1}^\infty \s{E}{n}{sg} \cup E^\infty$. 
Through these bijections, the map $r_\infty$ is the natural embedding, 
and the map $d_\infty$ is the disjoint union of the left shift 
\[
e_1 \mapsto d(e_1),\ (e_1,e_2,\ldots,e_n)\mapsto (e_2,\ldots,e_n),\ 
(e_1,e_2,\ldots,e_n,\ldots) \mapsto (e_2,\ldots,e_n,\ldots). 
\]
The groupoid $G(E_\infty^0,\sigma)$ can also be identified with the set 
\begin{align*}
\{&(\alpha,\beta) \in \s{E}{*}{sg} \times \s{E}{*}{sg} 
\mid d(\alpha)=d(\beta)\} \\
&\cup \{((e_1,e_2,\ldots,e_n,\ldots),(e'_1,e'_2,\ldots,e'_n,\ldots)) 
\in E^\infty \times E^\infty \mid \exists n,m \forall k, 
e_{n+k}=e'_{m+k}\}.
\end{align*}

The set $\s{E}{*}{sg} \cup E^\infty$ is nothing but the set of 
boundary path defined by Kumjian and Li in 
\cite[Definition~4.1]{KL}. 
As shown in \cite[Proposition~4.6]{KL}, 
this set coincides with the set of 
boundary path defined by Yeend in 
\cite[Section~4]{Yee}. 
Hence the groupoid $G(E_\infty^0,\sigma)$ 
coincides with the boundary-path groupoid $\mathcal{G}_\Lambda$ 
defined in \cite[Definition~4.1]{Yee} 
for the topological $1$-graph $\Lambda$ associated with $E$. 
Thus we give another proof for \cite[Theorem~5.2]{Yee}. 
(See also \cite[Theorem~7.7]{KL}.)

\section{Relative topological graphs}\label{Sec:Relative}

Let $E=(E^0,E^1,d,r)$ be a topological graph. 
Let $U$ be an open subset of $\s{E}{0}{rg}$. 
We call a pair $(E;U)$ a relative topological graph. 
From a relative topological graph, 
we get a \Ca $\cO(E;U)$ as a relative Pimsner algebra 
of $C_d(E^1)$ with respect to the ideal $C_0(U) \subset C_0(\s{E}{0}{rg})$ 
as defined in \cite[Definition 2.18]{MS}. 
When $U=\s{E}{0}{rg}$, 
we recover original topological graph \Ca $\cO(E)=\cO(E;\s{E}{0}{rg})$. 
When $U=\emptyset$, 
we get a Toeplitz-type \Ca $\cT(E)=\cO(E;\emptyset)$ 
(see \cite[Definition~2.2]{KaI}). 

By \cite[Proposition~3.21]{KaII}, 
the \Ca $\cO(E;U)$ of a relative topological graph $(E;U)$ 
is isomorphic to $\cO(E_Y)$ defined in \cite[Section~3]{KaII} 
for $Y=\s{E}{0}{rg}\setminus U$. 
We do not use this fact. 

We define a relative dual topological graph $E_U=(E_U^0,E_U^1,d_U,r_U)$ 
as follows. 

\begin{definition}
We define two subsets $E_U^0 \subset E^0\times \tE^1$ 
and $E_U^1 \subset E^1 \times \tE^1$ by 
\begin{align*}
E_U^0 
&:=\bigl\{(v\,,e)\in E^0\times \tE^1\ \big|\ 
\text{$v=r(e)$ if $e\in E^1$, 
$v\in E^0\setminus U$ if $e=\infty$}\bigr\},\\
E_U^1 
&:=\bigl\{(e',e)\in E^1 \times \tE^1\ \big|\ 
\text{$d(e')=r(e)$ if $e\in E^1$, 
$e'\in E^1\setminus d^{-1}(U)$ if $e=\infty$}\bigr\}.
\end{align*}
\end{definition}

By a similar way to the proof of Proposition~\ref{Prop:E01cpt}, 
one can show that 
\begin{align*}
\tE_U^0 
&:=E_U^0 \cup \{(\infty,\infty)\} \\
&\phantom{:}= \bigl\{(v\,,e)\in \tE^0\times \tE^1\ \big|\ 
\text{$v=r(e)$ if $e\in E^1$, 
$v\in\tE^0\setminus U$ if $e=\infty$}\bigr\},\\
\tE_U^1 
&:=E_U^1 \cup \{(\infty,\infty)\} \\
&\phantom{:}= \bigl\{(e',e)\in \tE^1 \times \tE^1\ \big|\ 
\text{$d(e')=r(e)$ if $e\in E^1$, 
$e'\in\tE^1\setminus d^{-1}(U)$ if $e=\infty$}\bigr\}
\end{align*}
are compact. 
The key fact is that for $(v,e) \in \tE^0\times \tE^1\setminus \tE_U^0$ 
with $e=\infty$, we have $v \in U \subset \s{E}{0}{rg}$. 
Hence $E_U^0$ and $E_U^1$ are locally compact, 
and their one-point compactifications 
can be naturally identified with $\tE_U^0$ and $\tE_U^1$, 
respectively. 
We define two continuoius maps $d_U, r_U\colon E_U^1\to E_U^0$ 
by $d_U((e',e))=(d(e'),e)$ and $r_U((e',e))=(r(e'),e')$ 
for $(e',e)\in E_U^1$. 
These two maps are well-defined and $d_U$ is locally homeomorphic 
because 
\[
E_U^1 
=\bigl\{(e',e)\in E^1\times \tE^1\ \big| \ 
(d(e'),e)\in E_U^0\bigr\}. 
\]
Thus we get a topological graph $E_U=(E_U^0,E_U^1,d_U,r_U)$. 
which is called the {\em relative dual} topological graph of $E$ 
with respect to $U$. 

In the exactly same way to the proof of Lemma~\ref{Lem:E1rg}, 
one can show that the map $r_U\colon E_U^1\to E_U^0$ is 
a proper surjection onto the open subset 
$\bigl\{(r(e),e)\in E_U^0 \ \big|\ e\in E^1\bigr\} \subset E_U^0$. 
This shows that 
$(E_U^0)_{\rs{rg}} = \bigl\{(r(e),e)\in E_U^0 \ \big|\ e\in E^1\bigr\}$ 
which is homeomorphic to $E^1$. 
Taking their complements, we get 
$(E_U^0)_{\rs{sg}} = 
\big\{(v,\infty) \in E_U^0 \ \big|\ v \in E^0\setminus U\big\}$ 
which is homeomorphic to $E^0\setminus U$. 

We define continuous maps $m_U^0$ and $m_U^1$ by 
\[
m_U^0\colon \tE_U^0\ni (v,e)\mapsto v\in \tE^0
\quad \text{ and } \quad 
m_U^1\colon \tE_U^1\ni (e',e)\mapsto e'\in \tE^1. 
\]
For $i=0,1$, 
$(\infty,\infty) \in \tE_U^i$ 
is the only element which is sent to $\infty\in \tE^i$ 
by $m_U^i$. 
Hence $m_U^i$ induces the proper continuous map 
from $E_U^i$ to $E^i$ for $i=0,1$. 
The following lemma is easy to see. 

\begin{lemma}\label{Lem:factormaprel}
The pair $m_U=(m_U^0,m_U^1)$ 
is a surjective factor map from $E_U$ to $E$ 
such that $(m_U^0)^{-1}(U) \subset (E_U^0)_{\rs{rg}}$.
\end{lemma}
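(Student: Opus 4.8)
The plan is to establish the three assertions of the lemma in turn: that $m_U=(m_U^0,m_U^1)$ is a factor map, that it is surjective, and that $(m_U^0)^{-1}(U)\subset(E_U^0)_{\rs{rg}}$. The argument runs exactly parallel to the treatment of the non-relative dual graph in Section~\ref{Sec:dual}, with $\s{E}{0}{rg}$ systematically replaced by $U$; in particular the continuity and properness of $m_U^0,m_U^1$, the well-definedness of $d_U,r_U$, and the descriptions $(E_U^0)_{\rs{rg}}=\{(r(e),e)\mid e\in E^1\}$ and $(E_U^0)_{\rs{sg}}=\{(v,\infty)\in E_U^0\mid v\in E^0\setminus U\}$ are already recorded above, so little remains.

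For the factor-map axioms, the intertwining identities $d\circ m_U^1=m_U^0\circ d_U$ and $r\circ m_U^1=m_U^0\circ r_U$ on $E_U^1$ are immediate from the formulas $m_U^1((e',e))=e'$, $d_U((e',e))=(d(e'),e)$, $r_U((e',e))=(r(e'),e')$. For the unique-lifting condition, given $e'\in E^1$ and $(v,e)\in E_U^0$ with $v=d(e')$, I would check that $(e',e)\in E_U^1$ by cases: if $e\in E^1$, then $(v,e)\in E_U^0$ forces $v=r(e)$, so $d(e')=r(e)$; if $e=\infty$, then $(v,\infty)\in E_U^0$ forces $v\in E^0\setminus U$, so $e'\in E^1\setminus d^{-1}(U)$. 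Since $(e',e)$ is visibly the unique element of $E_U^1$ whose $m_U^1$-image is $e'$ and whose $d_U$-image is $(v,e)$, the lifting condition holds exactly as in Section~\ref{Sec:dual}, and $m_U$ is a factor map.

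Next I would prove surjectivity of $m_U^0$. Take $v\in E^0$: if $v\notin U$, then $(v,\infty)\in E_U^0$ and $m_U^0((v,\infty))=v$; if $v\in U$, then, since $U\subset\s{E}{0}{rg}\subset r(E^1)$, there is $e\in E^1$ with $r(e)=v$, whence $(v,e)\in E_U^0$ and $m_U^0((v,e))=v$. The inclusion $\s{E}{0}{rg}\subset r(E^1)$ I would recall or reprove in two lines: each $v\in\s{E}{0}{rg}$ has, from $v\notin\overline{\s{E}{0}{sce}}$ and $v\in\s{E}{0}{fin}$, an open neighbourhood $V$ with $V\subset\overline{r(E^1)}$ and $r^{-1}(V)$ compact, so $V\cap r(E^1)=r(r^{-1}(V))$ is closed and dense in $V$, hence equals $V$. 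By the convention recorded just before Proposition~\ref{Prop:factormap}, surjectivity of $m_U^0$ then yields surjectivity of $m_U^1$ as well.

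Finally, for $(m_U^0)^{-1}(U)\subset(E_U^0)_{\rs{rg}}$, I would use the description $(E_U^0)_{\rs{sg}}=\{(v,\infty)\in E_U^0\mid v\in E^0\setminus U\}$ noted above: every point of $(E_U^0)_{\rs{sg}}$ has first coordinate outside $U$, so it lies outside $(m_U^0)^{-1}(U)$; hence $(m_U^0)^{-1}(U)$ is contained in the complement $(E_U^0)_{\rs{rg}}$. I expect no real obstacle; the only step needing any care is surjectivity of $m_U^0$, and that is precisely where the hypothesis $U\subset\s{E}{0}{rg}$, rather than mere openness of $U$ in $E^0$, is used, via the inclusion $U\subset r(E^1)$.
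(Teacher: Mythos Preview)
Your proof is correct and follows precisely the approach the paper intends: the paper offers no proof for this lemma beyond ``easy to see,'' and the verification you carry out is the direct analogue of the arguments in Section~\ref{Sec:dual} (the paragraph before Proposition~\ref{Prop:factormap} for the factor-map axioms, and Lemma~\ref{Lem:E1rg}/Proposition~\ref{Prop:factormap} for surjectivity and the regularity-type inclusion), with $\s{E}{0}{rg}$ replaced by $U$ throughout. Your careful justification of $\s{E}{0}{rg}\subset r(E^1)$ for the surjectivity step is the one non-trivial point, and it is correct.
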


\begin{proposition}\label{Prop:isomU}
The factor map $m_U=(m_U^0, m_U^1)$ 
induces an isomorphism $\mu\colon \cO(E;U)\to \cO(E_U)$. 
\end{proposition}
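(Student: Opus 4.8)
The plan is to imitate the proof of Theorem~\ref{Thm:isom1} step by step, with the relative Cuntz--Pimsner algebra $\cO(E;U)$ replacing $\cO(E)$ and with $E^0\setminus U$ and $E^1\setminus d^{-1}(U)$ taking over the roles of $\s{E}{0}{sg}$ and $\s{E}{1}{sg}$. Write $(t^0,t^1)$ for the universal relative Cuntz--Krieger $(E;U)$-pair generating $\cO(E;U)$ and $(t_U^0,t_U^1)$ for the universal Cuntz--Krieger $E_U$-pair generating $\cO(E_U)$. First I would construct $\mu$ by pulling $(t_U^0,t_U^1)$ back along $m_U$, obtaining the Toeplitz $E$-pair $g\mapsto t_U^0(g\circ m_U^0)$, $\eta\mapsto t_U^1(\eta\circ m_U^1)$; since, by Lemma~\ref{Lem:factormaprel}, $(m_U^0)^{-1}(U)\subset (E_U^0)_{\rs{rg}}$, for $g\in C_0(U)$ the function $g\circ m_U^0$ lies in $C_0((E_U^0)_{\rs{rg}})$, and then the Cuntz--Krieger relation for $(t_U^0,t_U^1)$ together with \cite[Proposition~2.5]{KaII} shows that this pull-back is a relative Cuntz--Krieger $(E;U)$-pair. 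It therefore induces a \shom $\mu\colon\cO(E;U)\to\cO(E_U)$ with $\mu(t^0(g))=t_U^0(g\circ m_U^0)$ and $\mu(t^1(\eta))=t_U^1(\eta\circ m_U^1)$. Injectivity of $\mu$ follows, as for \cite[Proposition~2.9]{KaII}, from a gauge-invariant uniqueness argument: $g\mapsto g\circ m_U^0$ is faithful because $m_U^0$ is surjective, while $(m_U^0)^{-1}(U)\subset (E_U^0)_{\rs{rg}}$ and $(E_U^0)_{\rs{sg}}\cong E^0\setminus U$ guarantee that the pull-back pair satisfies no Cuntz--Krieger relation at a vertex outside $U$; alternatively one may invoke \cite[Proposition~3.21]{KaII}.

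It then remains to show that $\mu$ is surjective, for which it suffices to check that its image contains the generators $t_U^0(C_0(E_U^0))$ and $t_U^1(C_{d_U}(E_U^1))$ of $\cO(E_U)$. I would use the decompositions recorded just before the statement: $E_U^0$ is the disjoint union of the open set $(E_U^0)_{\rs{rg}}\cong E^1$ and the closed set $(E_U^0)_{\rs{sg}}\cong E^0\setminus U$, while $E_U^1$ is the disjoint union of the open set $E^2=\{(e',e)\in E^1\times E^1\mid d(e')=r(e)\}$ and the closed set $\{(e',\infty)\mid e'\in E^1\setminus d^{-1}(U)\}\cong E^1\setminus d^{-1}(U)$. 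Given $f\in C_0(E_U^0)$, pick $g\in C_0(E^0)$ with $g(v)=f(v,\infty)$ for all $v\in E^0\setminus U$ (possible since $E^0\setminus U$ is closed in $E^0$); then $f-g\circ m_U^0$ vanishes on $(E_U^0)_{\rs{sg}}$, hence lies in $C_0((E_U^0)_{\rs{rg}})$ and corresponds, under $(E_U^0)_{\rs{rg}}\cong E^1$, to a unique $h\in C_0(E^1)$. Exactly as in the proof of Theorem~\ref{Thm:isom1}, the Cuntz--Krieger relation for $(t_U^0,t_U^1)$ and \cite[Proposition~2.5]{KaII} give $t_U^0(f-g\circ m_U^0)=\mu(\psi_{t^1}(\pi_r(h)))$, so $t_U^0(f)=\mu(t^0(g))+\mu(\psi_{t^1}(\pi_r(h)))$ lies in the image of $\mu$.

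For the edge generators I would first treat $t_U^1(C_{d_U}(E^2))$: for $f,g\in C_c(E^1)$ the function $h\in C_c(E^2)$ with $h(e',e)=f(e')g(e)$ satisfies $t_U^1(h)=t_U^1(f\circ m_U^1)\,t_U^0(\tilde g)$, where $\tilde g\in C_0(E_U^0)$ is $g$ transported along $E^1\ni e\mapsto(r(e),e)\in E_U^0$; since $t_U^1(f\circ m_U^1)=\mu(t^1(f))$ and $t_U^0(\tilde g)$ lies in the image of $\mu$ by the previous paragraph, and such functions $h$ span a dense subspace of $C_c(E^2)$, \cite[Lemma~1.26]{KaI} gives $t_U^1(C_{d_U}(E^2))\subset\operatorname{im}\mu$. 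For an arbitrary $\xi\in C_{d_U}(E_U^1)$ I would then use the restriction surjection $C_{d_U}(E_U^1)\to C_d(E^1\setminus d^{-1}(U))$, whose kernel is $C_{d_U}(E^2)$ and whose composite with $C_d(E^1)\ni\eta\mapsto\eta\circ m_U^1\in C_{d_U}(E_U^1)$ is the restriction map $C_d(E^1)\to C_d(E^1\setminus d^{-1}(U))$, surjective by \cite[Lemma~1.11]{KaI}; hence $\xi=\eta\circ m_U^1+\zeta$ with $\eta\in C_d(E^1)$ and $\zeta\in C_{d_U}(E^2)$, and $t_U^1(\xi)=\mu(t^1(\eta))+t_U^1(\zeta)$ lies in $\operatorname{im}\mu$. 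This exhausts the generators, so $\mu$ is onto and hence an isomorphism.

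The bulk of this argument --- that the pull-back pair is a Toeplitz pair, the compact-operator bookkeeping through \cite[Proposition~2.5]{KaII}, the density of product functions --- is essentially the content of the proof of Theorem~\ref{Thm:isom1}, so I do not expect new difficulty there. The step requiring genuine care, which I regard as the main obstacle, is the construction and, above all, the injectivity of $\mu$ for the \emph{relative} algebra: relative Cuntz--Pimsner algebras demand the finer gauge-invariant uniqueness theorem, so one must know that the pull-back pair is Cuntz--Krieger covariant on $C_0(U)$ and on no larger ideal of $C_0(\s{E}{0}{rg})$, and this is exactly where the defining condition $(m_U^0)^{-1}(U)\subset (E_U^0)_{\rs{rg}}$ of the relative dual graph, and the computations of $(E_U^0)_{\rs{rg}}$ and $(E_U^0)_{\rs{sg}}$ made before the statement, come in.
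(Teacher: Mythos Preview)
Your proposal is correct and follows essentially the same route as the paper: the paper's own proof is a three-line sketch saying that $\mu$ exists because $(m_U^0)^{-1}(U)\subset(E_U^0)_{\rs{rg}}$, is injective because $m_U^0\bigl((E_U^0)_{\rs{sg}}\bigr)=E^0\setminus U$ (citing \cite[Corollary~11.8]{KaPim3} for the relevant gauge-invariant uniqueness), and is surjective ``exactly same as in the proof of'' Theorem~\ref{Thm:isom1}. You have simply written out in full the surjectivity argument that the paper leaves implicit, and articulated the injectivity step via the same relative gauge-invariant uniqueness principle; the only cosmetic difference is that the paper cites \cite{KaPim3} rather than \cite[Proposition~3.21]{KaII}.
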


\begin{proof}
From the fact $(m_U^0)^{-1}(U) \subset (E_U^0)_{\rs{rg}}$, 
we get a \shom $\mu\colon \cO(E;U)\to \cO(E_U)$. 
This is injective because $m_U^0(E_U^0)_{\rs{sg}}=E^0\setminus U$ 
(see \cite[Corollary~11.8]{KaPim3}). 
One can see that this is surjective exactly 
same as in the proof of Proposition~\ref{Thm:isom1}. 
\end{proof}

For $k=1,2,\ldots$, 
we define a topological graph $E_{U,k}=(E_{U,k}^0,E_{U,k}^1,d_{U,k},r_{U,k})$ 
by
\begin{align*}
E_{U,k}^0 
&:= \bigl\{\, (v,\, e_1, e_2, \ldots, e_k) 
\in E^0 \times \tE^1 \times \cdots \times \tE^1
\ \big|\ (v, e_1) \in E_U^0, (e_i,e_{i+1}) \in \tE_U^1\bigr\} \\
E_{U,k}^1 
&:= \bigl\{ (e_0, e_1, e_2, \ldots, e_k) 
\in E^1 \times \tE^1 \times \cdots \times \tE^1
\ \big|\ (e_0, e_1) \in E_U^1, (e_i,e_{i+1}) \in \tE_U^1\bigr\}, 
\end{align*}
and 
\begin{align*}
d_{U,k}\big((e_0, e_1, e_2, \ldots, e_k)\big)
&:=(d(e_0), e_1, e_2, \ldots, e_k),\\ 
r_{U,k}\big((e_0, e_1, e_2, \ldots, e_k)\big)
&:=(r(e_0), e_0, e_1, \ldots, e_{k-1}) 
\end{align*}
for $(e_0, e_1, e_2, \ldots, e_k) \in E_{U,k}^1$. 
Then $E_{U,1}$ is nothing but $E_U$, and 
$E_{U,k+1}$ is the dual graph of $E_{U,k}$ for $k=1,2,\ldots$. 
Let $E_{U,0}$ be $E$. 
For $k,l\in\N$ with $k \leq l$, 
we define 
\begin{align*}
m_{k,l}^0 \colon 
E_{U,l}^0\ni (v,\, e_1, e_2, \ldots, e_l) 
& \mapsto (v,\, e_1, e_2, \ldots, e_k) \in E_{U,k}^0 \\
m_{k,l}^1 \colon 
E_{U,l}^1\ni (e_0, e_1, e_2, \ldots, e_l) 
& \mapsto (e_0, e_1, e_2, \ldots, e_k) \in E_{U,k}^1
\end{align*}
and set $m_{k,l} := (m_{k,l}^0, m_{k,l}^1)$. 
We have $m_{j,k}^i \circ m_{k,l}^i = m_{j,l}^i$ 
for $j \leq k \leq l$ and $i=0,1$. 
For $k \geq 1$, $m_{0,k}$ is simply denote $m_k$. 

For $1 \leq k \leq l$, $E_{U,k}$ and $m_{k,l}$ are 
the ones considered in Section~\ref{Sec:iterating} 
from $E_U=E_{U,1}$. 
Hence for $1 \leq k \leq l$, 
$m_{k,l}$ is a surjective regular factor map. 
On the other hand, 
one can easily show that for $1 \leq k$, 
$m_{k}$ is a surjective factor map with 
$(m_{k}^0)^{-1}(U) \subset (E_{U,k}^0)_{\rs{rg}}$. 

Next we define a topological graph 
$E_{U,\infty} = (E_{U,\infty}^0,E_{U,\infty}^1,d_{U,\infty},r_{U,\infty})$. 
by
\begin{align*}
E_{U,\infty}^0 
&:= \bigl\{\, (v,\, e_1, e_2, \ldots) 
\in E^0 \times \tE^1 \times \cdots 
\ \big|\ (v, e_1) \in E_U^0, (e_i,e_{i+1}) \in \tE_U^1\bigr\} \\
E_{U,\infty}^1 
&:= \bigl\{ (e_0, e_1, e_2, \ldots) 
\in E^1 \times \tE^1 \times \cdots 
\ \big|\ (e_0, e_1) \in E_U^1, (e_i,e_{i+1}) \in \tE_U^1\bigr\}. 
\end{align*}
and 
\begin{align*}
d_{U,\infty}\big((e_0, e_1, e_2, \ldots )\big)
&:=(d(e_0), e_1, e_2, \ldots ),\\ 
r_{U,\infty}\big((e_0, e_1, e_2, \ldots )\big)
&:=(r(e_0), e_0, e_1, \ldots ) 
\end{align*}
for $(e_0, e_1, e_2, \ldots) \in E_{U,\infty}^1$. 
We define factor maps $m_{k,\infty}$ 
from $E_{U,\infty}$ to $E_{U,k}$ for $k \in \N$ and 
a factor map $m_\infty$ from $E_{U,\infty}$ to $E$ 
in the same way as in Section~\ref{Sec:infinite}. 
Then similary as Theorem~\ref{Thm:Einfty=SGDS}
and Proposition~\ref{Prop:lim}, 
we can see that the topological graph 
$E_{U,\infty}$ is an SGDS. 
and that the topological graph $E_{U,\infty}$ 
coincided with the project limit of the 
projective system $((E_{U,k})_{k}, (m_{k,l})_{k,l})$. 

We have the following. 

\begin{theorem}\label{Thm:isomU}
The \shoms $\mu\colon \cO(E;U)\to \cO(E_{k,U})$ 
induced by the factor maps $m_k$ for $k \in \N$ and $k=\infty$
are isomorphisms. 
\end{theorem}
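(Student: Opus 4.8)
The plan is to deduce the statement from two ingredients already in hand: Proposition~\ref{Prop:isomU}, which is exactly the case $k=1$ (the factor map $m_U=m_1$ from $E_U=E_{U,1}$ to $E$ induces an isomorphism $\mu_U\colon\cO(E;U)\to\cO(E_U)$), and the isomorphism theorems of Section~\ref{Sec:C*alg}, applied not to $E$ but to the topological graph $E_U$ itself. The observation that makes this work is structural and is already recorded in the text: $E_{U,1}=E_U$, and $E_{U,k+1}$ is the dual topological graph of $E_{U,k}$, so that for every $k\geq1$ the pair $(E_{U,k},m_{1,k})$ is precisely the topological graph $E_{k-1}$ together with the factor map $m_{k-1}$ produced in Section~\ref{Sec:iterating} when one starts from $E_U$ instead of $E$, and $(E_{U,\infty},m_{1,\infty})$ is precisely the pair $(E_\infty,m_\infty)$ of Section~\ref{Sec:infinite} for the graph $E_U$; in particular $m_{1,\infty}$ is a regular surjective factor map.

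Given this, here is the argument I would write. Applying Theorem~\ref{Thm:isomEk} to $E_U$ shows that for each $k\geq1$ the \shom $\cO(E_U)\to\cO(E_{U,k})$ induced by $m_{1,k}$ is an isomorphism, and applying Theorem~\ref{Thm:isomEinf} to $E_U$ shows that the \shom $\cO(E_U)\to\cO(E_{U,\infty})$ induced by $m_{1,\infty}$ is an isomorphism. Now fix $k$ with $1\leq k\leq\infty$. As factor maps one has $m_k=m_U\circ m_{1,k}$ (reading $m_{1,\infty}$ for $k=\infty$), and $(m_k^0)^{-1}(U)\subset(E_{U,k}^0)_{\rs{rg}}$: this is recorded in the text for finite $k$, and for $k=\infty$ it follows in the same way by taking the preimage under $m_{1,\infty}^0$ of the inclusion $(m_U^0)^{-1}(U)\subset(E_U^0)_{\rs{rg}}$ of Lemma~\ref{Lem:factormaprel} and using the regularity of $m_{1,\infty}$. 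Hence $m_k$ induces a \shom $\mu\colon\cO(E;U)\to\cO(E_{U,k})$, and by the naturality of the passage from a (relative) topological graph to its \Ca under factor maps — the functoriality used in \cite[Proposition~2.9]{KaII} and in the construction of $\mu_U$ from \cite[Definition~2.18]{MS} — this $\mu$ coincides with the composite of $\mu_U$ with the isomorphism $\cO(E_U)\to\cO(E_{U,k})$ from the previous sentence. A composite of isomorphisms is an isomorphism, which proves the theorem.

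The mathematical content thus lies entirely in Proposition~\ref{Prop:isomU} and in the non-relative Theorems~\ref{Thm:isom1}--\ref{Thm:isomEinf}; the step I expect to need the most care is the last one, namely checking that the \shoms induced by $m_k$, $m_U$ and $m_{1,k}$ are compatible with the composition $m_k=m_U\circ m_{1,k}$, so that the factorization $\mu=(\text{iso})\circ\mu_U$ holds on the nose rather than merely up to isomorphism. This is routine — it parallels the generator-by-generator verifications in Section~\ref{Sec:C*alg} — but it is the one point where the relative structure of $\cO(E;U)$ meets the ordinary topological-graph machinery used for $E_U$ and its iterated duals, so it is where I would be careful.
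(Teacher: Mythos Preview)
Your proposal is correct and follows essentially the same approach as the paper: the paper's proof simply invokes Proposition~\ref{Prop:isomU} to get the isomorphism $\cO(E;U)\to\cO(E_U)$, then invokes Theorems~\ref{Thm:isomEk} and~\ref{Thm:isomEinf} applied to $E_U$ to get the isomorphisms $\cO(E_U)\to\cO(E_{U,k})$ for $k\in\N$ and $k=\infty$, and concludes. Your version is more explicit about the functoriality step (that $\mu$ really factors as the composite through $\cO(E_U)$) and about extending the inclusion $(m_k^0)^{-1}(U)\subset(E_{U,k}^0)_{\rs{rg}}$ to $k=\infty$, but this is the same argument with additional care on points the paper leaves implicit.
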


\begin{proof}
By Proposition~\ref{Prop:isomU}, 
the map from $\cO(E;U)$ to $\cO(E_U)$ is an isomorphism. 
By Theorem~\ref{Thm:isomEk} and Theorem~\ref{Thm:isomEinf}, 
the maps from $\cO(E_U)$ to $\cO(E_{k,U})$ for $k=1,2,\ldots$ 
and to $\cO(E_{\infty,U})$ 
are isomorphism. 
These facts finish the proof. 
\end{proof}

\begin{corollary}\label{Cor:gpoidEU}
The relative topological graph \Ca $\cO(E;U)$ is 
isomorphic to the \Ca $C^*(E_{\infty,U},\sigma)$ 
of the SGDS $(E_{\infty,U}^0,\sigma)$ 
where $\dom(\sigma)=r_{\infty,U}(E_{\infty,U}^1)$ 
and $\sigma = d_{\infty,U} \circ r_{\infty,U}^{-1}$. 
\end{corollary}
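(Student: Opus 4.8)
The plan is to combine the case $k=\infty$ of Theorem~\ref{Thm:isomU} with Proposition~\ref{Prop:SGDS=graph}, exactly as Theorem~\ref{Thm:graph=SGDS} was deduced from Theorem~\ref{Thm:isomEinf} and Proposition~\ref{Prop:SGDS=graph}. First I would recall that, as noted just before Theorem~\ref{Thm:isomU}, the topological graph $E_{\infty,U}$ is an SGDS: the map $r_{\infty,U}\colon E_{\infty,U}^1 \to E_{\infty,U}^0$ is a homeomorphism onto the open subset $(E_{\infty,U}^0)_{\rs{rg}}$ of $E_{\infty,U}^0$, by the same elementary argument used for $r_\infty$ in Theorem~\ref{Thm:Einfty=SGDS}. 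Hence, putting $\dom(\sigma) := r_{\infty,U}(E_{\infty,U}^1)$ and $\sigma := d_{\infty,U}\circ r_{\infty,U}^{-1}$, the pair $(E_{\infty,U}^0,\sigma)$ is an SGDS, with $\sigma$ a local homeomorphism from the open set $\dom(\sigma)$ onto the open set $d_{\infty,U}(E_{\infty,U}^1)$.

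Next I would note that, by the lemma in Section~\ref{Sec:topgraph} together with the defining formulas for $d_{\infty,U}$ and $r_{\infty,U}$, the topological graph associated with the SGDS $(E_{\infty,U}^0,\sigma)$ is canonically isomorphic to $E_{\infty,U}$ itself. Proposition~\ref{Prop:SGDS=graph} then supplies a natural isomorphism $C^*(E_{\infty,U}^0,\sigma) \cong \cO(E_{\infty,U})$. On the other hand, the case $k=\infty$ of Theorem~\ref{Thm:isomU} gives an isomorphism $\cO(E;U) \cong \cO(E_{\infty,U})$. Composing the two isomorphisms yields $\cO(E;U) \cong C^*(E_{\infty,U}^0,\sigma)$, as asserted.

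There is no real obstacle here: all the substantive content lies in Proposition~\ref{Prop:isomU}, in Theorem~\ref{Thm:isomU}, and in the already established fact that $E_{\infty,U}$ is an SGDS. The only point worth a moment's care is the identification of the topological graph built from $(E_{\infty,U}^0,\sigma)$ with $E_{\infty,U}$, which is immediate from the definitions. Thus the corollary follows directly from the two results just cited.
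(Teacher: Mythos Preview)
Your proposal is correct and matches the paper's own treatment: the corollary is stated without proof, immediately after Theorem~\ref{Thm:isomU}, and is meant to follow from that theorem together with Proposition~\ref{Prop:SGDS=graph} in exactly the way Theorem~\ref{Thm:graph=SGDS} was deduced from Theorem~\ref{Thm:isomEinf}. You have simply written out the two-line argument the paper leaves implicit.
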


By this corollary, 
every relative topological graph \Ca $\cO(E;U)$ 
has the groupoid model $G(E_{\infty,U}^0,\sigma)$. 
This groupoid $G(E_{\infty,U}^0,\sigma)$ can be described 
in terms of graph $E$ as in Section~\ref{Sec:groupoid}. 
In particular, the unit space $E_{\infty,U}^0$ is, as a set, 
identified with the disjoint union $E_{\setminus U}^*\cup E^\infty$ 
where $E_{\setminus U}^* := \{\alpha \in E^* \mid d(\alpha) \in E^0\setminus U\}$. 
When $U=\emptyset$, we have $E_{\setminus \emptyset}^* =E^*$. 
Thus we recover the result \cite[Theorem~5.1]{Yee}.

\section{Partially defined topological graphs}\label{Sec:partial}

In this section, we introduce the notion of 
a partially defined topological graph, 
and investigate the \Ca associated to it. 
The content of this section can be applied to 
the situation in \cite[Section~7]{CK}.

\begin{lemma}\label{Lem:partialmap}
Let $E^0$ and $E^1$ be locally compact spaces. 
For a continuous map $r$ from an open subset $\dom(r)$ of $E^1$ to $E^0$, 
the following conditions are equivalent:
\begin{enumerate}
\rom
\item For every compact subset $X$ of $E^0$, 
its inverse image $r^{-1}(X)$ is closed in $E^1$. 
\item The map $\tr$ from $E^1$ to 
the one-point compactification $\tE^0$ of $E^0$ 
defined by 
\[
\tr(e)=\begin{cases} r(e) & \text{if $e \in \dom(r)$,} \\
\infty & \text{if $e \notin \dom(r)$} \end{cases}
\]
is continuous. 
\item There exist a locally compact space $U$ containing $E^0$ 
as an open subset and a continuous map $s$ from $E^1$ to $U$ 
such that $s^{-1}(E^0)=\dom(r)$ and $s|_{\dom(r)}=r$. 
\item For each $f \in C_0(E^0)$, the function $F$ on $E^1$ defined by 
\[
F(e)=\begin{cases} f(r(e)) & \text{if $e \in \dom(r)$,} \\
0 & \text{if $e \notin \dom(r)$} \end{cases}
\]
is continuous. 
\end{enumerate}
\end{lemma}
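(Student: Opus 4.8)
The plan is to prove the chain of implications (i)$\Rightarrow$(ii)$\Rightarrow$(iii)$\Rightarrow$(iv)$\Rightarrow$(i), since each arrow is short once one fixes the right reformulation. The unifying idea is that all four conditions say, in different languages, that ``$r$ extends continuously to all of $E^1$ by sending the points outside $\dom(r)$ to the point at infinity.'' Indeed $\tr$ in (ii) is precisely this extension, the map $s$ in (iii) is a local model for it near $E^0$, and the function $F$ in (iv) is the pullback along $\tr$ of a function in $C_0(E^0) \subset C(\tE^0)$.

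For (i)$\Rightarrow$(ii): a basic open set of $\tE^0$ is either an open subset $V$ of $E^0$, in which case $\tr^{-1}(V) = r^{-1}(V)$ is open in $\dom(r)$ hence in $E^1$; or it is $\tE^0 \setminus X$ for a compact $X \subset E^0$, in which case $\tr^{-1}(\tE^0 \setminus X) = E^1 \setminus r^{-1}(X)$, which is open by hypothesis (i). So $\tr$ is continuous. For (ii)$\Rightarrow$(iii): simply take $U := \tE^0$ and $s := \tr$; then $U$ contains $E^0$ as an open subset, $s$ is continuous by (ii), $s^{-1}(E^0) = \tr^{-1}(E^0) = \dom(r)$ by the definition of $\tr$, and $s|_{\dom(r)} = r$. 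For (iii)$\Rightarrow$(iv): given $f \in C_0(E^0)$, recall the paper's convention that $C_0(E^0)$ sits inside $C_0(U)$ as the ideal of functions vanishing on the closed set $U \setminus E^0$; thus $f$ extends to a continuous function $\tilde f \in C_0(U)$ with $\tilde f|_{U\setminus E^0} = 0$. Then $F = \tilde f \circ s$: for $e \in \dom(r)$ we get $\tilde f(s(e)) = \tilde f(r(e)) = f(r(e))$, and for $e \notin \dom(r)$ we have $s(e) \in U \setminus E^0$, so $\tilde f(s(e)) = 0 = F(e)$. Hence $F$ is continuous.

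The one arrow that needs genuine care is (iv)$\Rightarrow$(i), and I expect it to be the main obstacle. Fix a compact $X \subset E^0$; I want $r^{-1}(X)$ closed in $E^1$, equivalently I want to show every $e_0 \in E^1 \setminus r^{-1}(X)$ has a neighborhood missing $r^{-1}(X)$. If $e_0 \in \dom(r)$, then $r(e_0) \notin X$; choose $f \in C_0(E^0)$ with $f \equiv 1$ on a neighborhood of $X$ and $f(r(e_0)) = 0$ (possible by local compactness, Urysohn), and then $\{e : |F(e)| < 1\}$ is a neighborhood of $e_0$ disjoint from $r^{-1}(X)$. If $e_0 \notin \dom(r)$, the same $f$ gives $F(e_0) = 0$ by definition, and again $\{|F| < 1\}$ is the required neighborhood (on $r^{-1}(X)$ we have $F = f \circ r \equiv 1$). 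So $r^{-1}(X)$ is closed, which is (i). One should double-check that a single $f$ with these two properties exists, but since $X$ is compact and $r(e_0) \notin X$ (or $e_0 \notin \dom(r)$, where no constraint at $r(e_0)$ is needed), this is a routine application of local compactness of $E^0$.
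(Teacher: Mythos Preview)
Your proof is correct and follows the same cycle (i)$\Rightarrow$(ii)$\Rightarrow$(iii)$\Rightarrow$(iv)$\Rightarrow$(i) as the paper, with the same ideas at each step. The only cosmetic difference is in (iv)$\Rightarrow$(i): the paper chooses a single $f\in C_0(E^0)$ with $f\equiv 1$ on $X$ and observes that $r^{-1}(X)$, being closed in $\dom(r)$ and contained in the closed set $F^{-1}(1)\subset\dom(r)$, is therefore closed in $E^1$, whereas you argue pointwise via Urysohn functions---both variants are equally valid.
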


\begin{proof}
By the topology of one-point compactification, 
(i) implies (ii). 
It is trivial to see that (ii) implies (iii). 
Take $U$ and $s$ as in (iii). 
A function $f \in C_0(E^0)$ is naturally considered as an element in $C_0(U)$. 
Thus we can consider a continuous function $f \circ s$ on $E^1$ 
which is nothing but $F$ in (iv). 
Thus (iii) implies (iv). 

Finally assume (iv), and take a compact subset $X$ of $E^0$. 
There exists $f \in C_0(E^0)$ such that $f$ is $1$ on $X$. 
Consider the continuous function $F$ for $f$ as in (iv). 
Then $F^{-1}(1)$ is a subset of $\dom(r)$ containing $r^{-1}(X)$, 
and is a closed subset of $E^1$. 
These facts show $r^{-1}(X)$ is closed not only in $\dom(r)$ 
but also in $E^1$. 
We have shown that (iv) implies (i). 
\end{proof}

\begin{definition}
A {\em partially defined} topological graph is 
a quadruple $E=(E^0,E^1,d,r)$ where 
$E^0$ and $E^1$ are locally compact spaces, 
$d\colon E^1\to E^0$ is a local homeomorphism, 
and $r$ is a continuous map from $\dom(r)\subset E^1$ to $E^0$ 
satisfying the equivalent conditions
in Lemma \ref{Lem:partialmap}.
\end{definition}

From a partially defined topological graph $(E^0,E^1,d,r)$, 
one can obtain a C*-algebra $C_0(E^0)$ and a C*-correspondence $C_d(E^1)$ 
over $C_0(E^0)$ in the same way as from a topological graph (see \cite{KaI}). 
In fact the left action $\varphi\colon C_0(E^0) \to \cL(C_d(E^1))$ is defined 
through the map $C_0(E^0) \to C(E^1)$ defined 
in (iv) of Lemma \ref{Lem:partialmap}. 
When the domain $\dom(r)$ of $r$ is whole $E^1$, 
$E$ is a topological graph. 
We can define $\s{E}{0}{rg}\subset E^0$, 
and consider Cuntz-Krieger $E$-pairs $(t^0,t^1)$ 
on the same way as topological graphs. 
Thus we can define a \Ca $\cO(E)$ which is the \Ca associated with 
the C*-correspondence $C_d(E^1)$ in the sense of \cite{KaPim}. 

A partially defined topological graph $(E^0,E^1,d,r)$ 
and its \Ca $\cO(E)$ naturally arise as follows. 

\begin{proposition}\label{Prop:restrict}
Let $F=(F^0,F^1,d,r)$ be a topological graph, 
and $E^0$ be an open subset of $F^0$ with $d(F^1)\subset E^0$. 
Then the following hold. 
\begin{enumerate}
\rom
\item 
$E=(E^0,E^1,d,r)$ is a partially defined topological graph 
where $E^1 = F^1$, $\dom(r)=r^{-1}(E^0)$ and $d,r$ are restrictions 
of the ones of $F$. 
\item 
The \Ca $\cO(E)$ is isomorphic to the $C^*$-subalgebra $A$ of $\cO(F)$ 
generated by $t^0(C_0(E^0))$ and $t^1(C_d(E^1))$ 
where $(t^0,t^1)$ is the universal Cuntz-Krieger $F$-pair.
\item 
The \Ca $A$ in (ii) is the kernel of the surjection 
$\cO(F) \to C_0(\s{F}{0}{sg}\setminus E^0)$
\item 
If $\s{F}{0}{sg} \subset E^0$, 
then $\cO(E)$ is isomorphic to $\cO(F)$ 
\end{enumerate}
\end{proposition}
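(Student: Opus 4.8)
The plan is to realise $\cO(E)$ as a concrete subalgebra of $\cO(F)$ via the universal Cuntz--Krieger $F$-pair $(t^0,t^1)$, and then read off (i)--(iv) from the structure of that embedding. For (i): since $E^1=F^1$ and $d(F^1)\subseteq E^0$ with $E^0$ open, the restriction of $d$ is still a local homeomorphism into $E^0$, and $\dom(r)=r^{-1}(E^0)$ is open in $E^1$; condition~(iii) of Lemma~\ref{Lem:partialmap} then holds with ambient space $F^0$ and $s:=r\colon F^1\to F^0$, so $E$ is a partially defined topological graph. Two preliminary observations will drive the rest. First, because every $F$-inner product $\ip{\xi}{\eta}$ vanishes outside $d(F^1)\subseteq E^0$, we have $C_d(E^1)=C_d(F^1)$ as Hilbert modules and, for $f\in C_0(E^0)\subseteq C_0(F^0)$, the $F$-left action $\pi_r(f)$ coincides with the $E$-left action of $f$ from Lemma~\ref{Lem:partialmap}(iv). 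Second, since $E^0$ is open one checks $\s{E}{0}{rg}=\s{F}{0}{rg}\cap E^0$ (a net in $r(F^1)$ or in $\s{F}{0}{sce}$ converging into the open set $E^0$ is eventually there, which handles both $\s{\cdot}{0}{fin}$ and $\overline{\s{\cdot}{0}{sce}}$). It follows that $(t^0|_{C_0(E^0)},\,t^1)$ is a Cuntz--Krieger $E$-pair --- the Toeplitz identities and the relation on $\s{E}{0}{rg}$ are restrictions of the corresponding identities for $(t^0,t^1)$ --- and the resulting \shom $\rho\colon\cO(E)\to\cO(F)$ has image exactly $A$.

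For (ii), I would show $\rho$ is injective by the gauge-invariant uniqueness theorem for Cuntz--Pimsner algebras (\cite{KaPim}): $\rho$ is injective on $C_0(E^0)$ because $t^0$ is injective, and $\rho$ intertwines the gauge action of $\cO(E)$ with the restriction to $A$ of the gauge action of $\cO(F)$, which does preserve $A$ since $A$ is generated by the gauge-fixed set $t^0(C_0(E^0))$ together with the degree-one set $t^1(C_d(F^1))$. Hence $\cO(E)\cong A$.

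For (iii), set $Y:=\s{F}{0}{sg}\setminus E^0$, which is closed in $F^0$. Defining $\bar t^0(f):=f|_Y$ and $\bar t^1:=0$ gives a Cuntz--Krieger $F$-pair --- the Toeplitz identity holds because $\ip{\xi}{\eta}|_Y=0$ (no edge has domain outside $E^0$) and the relation on $\s{F}{0}{rg}$ holds because $Y\cap\s{F}{0}{rg}=\emptyset$ --- so we obtain a surjection $q\colon\cO(F)\to C_0(Y)$, and plainly $A\subseteq\ker q$. For the reverse inclusion I would proceed in three steps: (a) verify $A$ is an ideal, using the standard description $\cO(F)=\cspa\{t^n(\xi)t^m(\eta)^*:n,m\ge0\}$ (\cite{KaI}) and the fact that for $n\ge1$ the $C_0(F^0)$-valued inner products on $C_d(F^n)$ land in $C_0(E^0)$ (because $d(F^n)\subseteq E^0$), so that $A$ is closed under multiplication by the generators $t^0(f),t^1(\xi)$ of $\cO(F)$; (b) note that the image of $t^1$ in $\cO(F)/A$ is $0$, so $\cO(F)/A$ is a quotient of $C_0(F^0)$, namely $C_0(F^0)/J$ with $J=\{f\in C_0(F^0):t^0(f)\in A\}$, hence $\cO(F)/A\cong C_0(W)$ for a closed $W\subseteq F^0$; (c) show $J\supseteq C_0(E^0\cup\s{F}{0}{rg})$, the inclusion $C_0(E^0)\subseteq J$ being trivial, $C_0(\s{F}{0}{rg})\subseteq J$ being the Cuntz--Krieger relation $t^0(g)=\psi_{t^1}(\pi_r(g))\in A$, and $C_0(d(F^1))\subseteq J$ following from $C_0(d(F^1))=\cspa\{\ip{\xi}{\eta}:\xi,\eta\in C_d(F^1)\}$ (\cite{KaI}) together with $t^0(\ip{\xi}{\eta})=t^1(\xi)^*t^1(\eta)\in A$, while $d(F^1)\subseteq E^0$. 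This gives $W\subseteq F^0\setminus(E^0\cup\s{F}{0}{rg})=Y$; on the other hand the surjection $C_0(W)=\cO(F)/A\to C_0(Y)$ induced by $q$ is $f|_W\mapsto f|_Y$, which is only well-defined when $Y\subseteq W$, so $W=Y$ and $\ker q=A$.

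Finally, (iv) is immediate: if $\s{F}{0}{sg}\subseteq E^0$ then $Y=\emptyset$, so $C_0(Y)=0$ and (iii) gives $A=\ker q=\cO(F)$, whence $\cO(E)\cong\cO(F)$ by (ii). The step I expect to be the main obstacle is (iii): one must both verify that $A$ is genuinely an ideal of $\cO(F)$ (not merely a subalgebra) and compute the ideal $(t^0)^{-1}(A)$ of $C_0(F^0)$ exactly from the Cuntz--Krieger relations; parts (i), (ii) and (iv) are comparatively routine.
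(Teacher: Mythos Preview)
Your proof is correct. Parts (i), (ii), and (iv) follow the same line as the paper: for (ii) the paper also invokes the gauge-invariant uniqueness theorem (its Proposition~\ref{GIUT}), and (iv) is deduced from (iii) in both.

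The genuine difference is in (iii). The paper simply asserts that the surjection $\cO(F)\to C_0(\s{F}{0}{sg}\setminus E^0)$ exists with kernel \emph{generated by} $t^0(C_0(E^0\cup\s{F}{0}{rg}))$ and $t^1(C_d(E^1))$, then observes $t^0(C_0(\s{F}{0}{rg}))\subset A$ via the Cuntz--Krieger relation to conclude this kernel equals $A$. This leans on (unstated) structural facts about quotients of topological graph $C^*$-algebras. Your route is more self-contained: you show directly that $A$ is an ideal, identify $\cO(F)/A$ with $C_0(W)$ for some closed $W\subset F^0$ by noting $t^1$ dies in the quotient, and then pin down $W=Y$ by a two-sided inclusion. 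This buys you independence from the ideal-structure theory of \cite{KaIII}, at the cost of a longer argument.

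Two small remarks on your (iii). In step (a), the cleanest justification that $A$ is an ideal is simply that $t^1(C_d(F^1))\subset A$ already, so only left and right multiplication by $t^0(f)$ for $f\in C_0(F^0)$ needs checking, and this sends each generator of $A$ back into $A$; the detour through inner products on $C_d(F^n)$ is unnecessary. In step (c), your inclusion $C_0(d(F^1))\subset J$ is redundant, since $d(F^1)\subset E^0$ gives $C_0(d(F^1))\subset C_0(E^0)\subset J$ immediately.
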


\begin{proof}
(i) It is clear that $E$ is a partially defined topological graph. 

(ii) It is clear that there is a \shom from $\cO(E)$ onto the $C^*$-subalgebra of $\cO(F)$ 
generated by $t^0(C_0(E^0))$ and $t^1(C_d(E^1))$. 
This \shom turns out to be isomorphism by 
Proposition~\ref{GIUT} below. 

(iii) 
The space $\s{F}{0}{sg}\setminus E^0$ consists of singular vertices, 
and there is no edge from a vertex in $\s{F}{0}{sg}\setminus E^0$ 
because $d(F^1)\subset E^0$. 
Hence there exists a surjection $\cO(F) \to C_0(\s{F}{0}{sg}\setminus E^0)$
whose kernel is generated by $t^0(C_0(E^0 \cup \s{F}{0}{rg}))$ 
and $t^1(C_d(E^1))$. 
Here note that 
$F_0\setminus (\s{F}{0}{sg}\setminus E^0)=E^0 \cup \s{F}{0}{rg}$. 
Since $C_0(\s{F}{0}{rg})$ is in the $C^*$-subalgebra of $\cO(F)$ 
generated by $t^1(C_d(E^1))$, 
it is in $A$ of (ii). 
Hence $A$ is the kernel of 
the surjection $\cO(F) \to C_0(\s{F}{0}{sg}\setminus E^0)$. 

(iv) 
This follows from (iii). 
\end{proof}

By \cite[Lemma~1.20]{KaI}, 
a C*-correspondence arising from a topological graph is always 
non-degenerate. 
On the other hand, 
a C*-correspondence arising from a partially defined topological graph 
$(E^0,E^1,d,r)$ 
is degenerate unless the domain $\dom(r)$ of $r$ is whole $E^1$. 
In this sense, we get new kinds of C*-correspondences 
from partially defined topological graphs. 
Nevertheless many results for C*-algebras of topological graphs 
can be similarly applied to this case. 
This is the case particularly for the results obtained through 
the theory of Pimsner algebras. 
The following are examples of such results. 
See \cite{KaI} for notation and proofs. 
(See also \cite{KaPim2} for proofs.) 

\begin{proposition}[{see \cite[Theorem~4.5]{KaI}}]\label{GIUT}
For a partially defined topological graph $E=(E^0,E^1,d,r)$ and 
a Cuntz-Krieger $E$-pair $T=(T^0,T^1)$, 
the following are equivalent:
\begin{enumerate}
\rom
\item The map $\rho_{T}\colon \cO(E)\to C^*(T)$ is an isomorphism.
\item The map $T^0$ is injective and 
there exists an automorphism $\beta'_z$ of $C^*(T)$ 
such that $\beta'_z(T^0(f))=T^0(f)$ and $\beta'_z(T^1(\xi))=zT^1(\xi)$
for every $z\in\T$.
\item The map $T^0$ is injective and 
there exists a conditional expectation $\varPsi_T$ 
from $C^*(T)$ onto $\F_T$ such that 
$\varPsi_T(T^n(\xi)T^m(\eta)^*)=\delta_{n,m}T^n(\xi)T^m(\eta)^*$ 
for $\xi\in C_d(E^n)$ and $\eta\in C_d(E^m)$.
\end{enumerate}
\end{proposition}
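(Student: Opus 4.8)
The plan is to reduce the statement to the already-established gauge-invariant uniqueness theorem for genuine topological graphs, since a partially defined topological graph gives rise to a $C^*$-correspondence $C_d(E^1)$ over $C_0(E^0)$ in exactly the same formal way, and all of \cite{KaI}'s Pimsner-algebra machinery only uses the correspondence structure, not the non-degeneracy of the left action. Concretely, $\cO(E)$ is by definition the Pimsner algebra $\cO_{C_d(E^1)}$, and a Cuntz--Krieger $E$-pair $T=(T^0,T^1)$ is precisely a representation of the correspondence $C_d(E^1)$ that is covariant on the ideal $C_0(\s{E}{0}{rg})$ on which the left action $\varphi$ is injective into $\cK(C_d(E^1))$. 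So the three conditions are literally the conditions appearing in \cite[Theorem~4.5]{KaI} (equivalently the general gauge-invariant uniqueness theorem for relative Cuntz--Pimsner algebras), and the real content is just to check that nothing in that proof used $\varphi$ being non-degenerate or unital.

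First I would record the implications (i)$\Rightarrow$(ii)$\Rightarrow$(iii). For (i)$\Rightarrow$(ii): the universal $\cO(E)$ carries the gauge action $\beta_z$ with $\beta_z(t^0(f))=t^0(f)$ and $\beta_z(t^1(\xi))=zt^1(\xi)$ by universality, and $t^0$ is injective (again a standard fact about the universal pair, which holds for any correspondence); transporting $\beta_z$ through the isomorphism $\rho_T$ gives $\beta'_z$, and injectivity of $T^0=\rho_T\circ t^0$ follows. For (ii)$\Rightarrow$(iii): integrate the gauge action $\beta'_z$ over $\T$ to obtain a faithful conditional expectation $\varPsi_T$ onto the fixed-point algebra $\F_T$, and identify $\F_T$ with the closure of the span of the $T^n(\xi)T^m(\eta)^*$ with $n=m$ exactly as in \cite{KaI}; the orthogonality relation for $\varPsi_T$ is then immediate from $\varPsi_T(T^n(\xi)T^m(\eta)^*)=\int_\T z^{n-m}\,T^n(\xi)T^m(\eta)^*\,dz$.

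The substantive direction is (iii)$\Rightarrow$(i). Here I would argue exactly as in the proof of the gauge-invariant uniqueness theorem in \cite{KaI}: the surjection $\rho_T\colon\cO(E)\to C^*(T)$ exists by universality, so the point is injectivity. Using the canonical expectation $\varPsi$ on $\cO(E)$ onto its core $\F$ and the hypothesized expectation $\varPsi_T$ on $C^*(T)$, which is intertwined by $\rho_T$, it suffices to show $\rho_T$ is injective on the core $\F=\overline{\bigcup_n \F^{(n)}}$, where $\F^{(n)}$ is built from $\cK(C_d(E^n))$; and since injectivity on a direct-limit core reduces to injectivity on each finite stage, one reduces to showing $T^0$ injective implies $\rho_T$ injective on each $\F^{(n)}$. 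This last step is the standard inductive argument in \cite{KaI} comparing $\F^{(n)}$ and $\F^{(n+1)}$ via the covariance relation on the ideal where $\varphi$ is injective into the compacts; I would simply invoke that $C_0(\s{E}{0}{rg})$ is precisely that ideal for the partially defined graph as well. The main obstacle, and the only place any care is needed, is to verify that the identification of the ideal on which $\varphi$ acts injectively into $\cK(C_d(E^1))$ is still $C_0(\s{E}{0}{rg})$ when $\varphi$ is degenerate; once that is in hand, every step of the topological-graph proof in \cite{KaI} (and of the general relative Cuntz--Pimsner theory in \cite{KaPim}) applies verbatim, which is why the statement is presented as ``see \cite[Theorem~4.5]{KaI}''.
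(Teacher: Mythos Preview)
Your proposal is correct and matches the paper's approach exactly: the paper does not give a proof of this proposition at all, but simply states it alongside several other results with the remark ``See \cite{KaI} for notation and proofs. (See also \cite{KaPim2} for proofs.)'', noting that results obtained through the theory of Pimsner algebras carry over because only the $C^*$-correspondence structure is used, not non-degeneracy of the left action. Your sketch of how the argument in \cite{KaI} goes through verbatim once one identifies $C_0(\s{E}{0}{rg})$ as the relevant ideal is precisely the content the paper is asking the reader to supply.
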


\begin{proposition}[{see \cite[Proposition~6.1]{KaI}}]
For a partially defined topological graph $E$, 
the $C^*$-algebra $\cO(E)$ is nuclear. 
\end{proposition}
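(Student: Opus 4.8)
The plan is to reduce nuclearity of $\cO(E)$ for a partially defined topological graph $E=(E^0,E^1,d,r)$ to the already established nuclearity of Pimsner algebras of suitable C*-cor\-res\-pon\-dences, exactly as in the proof of \cite[Proposition~6.1]{KaI}. First I would recall that $\cO(E)$ is, by definition, the C*-algebra $\cO_{C_d(E^1)}$ associated with the C*-correspondence $C_d(E^1)$ over the nuclear C*-algebra $C_0(E^0)$ in the sense of \cite{KaPim}. The only feature distinguishing this case from the genuine-topological-graph case is that the left action $\varphi$ may be degenerate; every other structural fact (that $C_0(E^0)$ is nuclear, that $C_d(E^1)$ is a countably generated Hilbert module when the spaces are second countable, and the general gauge-invariant uniqueness machinery summarized in Proposition~\ref{GIUT}) is unaffected. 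So the key point is simply that the general theory of \cite{KaPim} (equivalently \cite{KaPim2}) yields nuclearity of $\cO_X$ whenever the coefficient algebra is nuclear, with no non-degeneracy hypothesis on the left action.

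Concretely, the steps I would carry out are: (1) observe $\cO(E)$ carries the gauge action $\beta$ of $\T$ coming from Proposition~\ref{GIUT}(ii), with fixed-point algebra the core $\F$; (2) identify $\F$ as an inductive limit of the algebras $\F_n$ built from $\cK(C_d(E^m))$ for $m\le n$, each of which is nuclear because it is an extension/subalgebra assembled from compact operators on Hilbert modules over the nuclear algebra $C_0(E^0)$ — here one uses that $\cK(C_d(E^m))$ is Morita equivalent (via the module $C_d(E^m)$) to an ideal of $C_0(E^0)$, hence nuclear, a step that does not see degeneracy of $\varphi$; (3) conclude $\F$ is nuclear as an inductive limit of nuclear C*-algebras; (4) deduce $\cO(E)$ is nuclear since it is the crossed-product-type algebra $\F\rtimes_\beta\T$-analogue, more precisely because $\cO(E)$ is the closure of $\bigcup_n \F_n$-graded pieces and nuclearity passes from $\F$ through the conditional expectation $\varPsi$ of Proposition~\ref{GIUT}(iii) to $\cO(E)$ (an algebra with a faithful conditional expectation onto a nuclear subalgebra, which is an inductive limit of finitely-generated-over-$\F$ pieces, is nuclear).

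The main obstacle, such as it is, is purely bookkeeping: verifying that every lemma invoked from \cite{KaI} in the chain above was proved without secretly using $\varphi$ non-degenerate, and in particular that $\cK(C_d(E^m))$ remains nuclear in the partially defined setting. This is genuinely fine — $\cK(C_d(E^m))$ depends only on the Hilbert-module structure of $C_d(E^m)$ over $C_0(E^0)$ and not on the left action at all — but it is the one place where one must be slightly careful rather than merely cite. So the proof is essentially: \emph{the argument of \cite[Proposition~6.1]{KaI} applies verbatim, since the only hypothesis that changes, non-degeneracy of the left action, is never used in that argument.} I would write it out in that form, pointing to Proposition~\ref{GIUT} for the gauge action and conditional expectation that drive the inductive-limit decomposition, and to \cite{KaPim} (and \cite{KaPim2}) for the fact that Pimsner algebras over nuclear coefficient algebras are nuclear without any non-degeneracy assumption.
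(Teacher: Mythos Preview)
Your proposal is correct and matches the paper's approach exactly: the paper gives no separate proof here but simply refers the reader to \cite{KaI} (and \cite{KaPim2}) with the remark that the Pimsner-algebra arguments go through unchanged, which is precisely your observation that non-degeneracy of the left action is never used. Your sketch of the underlying argument (gauge action, nuclear core as an inductive limit, conditional expectation) is the standard one from those references.
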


\begin{proposition}[{see \cite[Proposition~6.3 and Proposition~6.6]{KaI}}]
For a partially defined topological graph $E=(E^0,E^1,d,r)$,
the $C^*$-algebra $\cO(E)$ is separable 
if and only if both $E^0$ and $E^1$ are second countable. 
In this case, $\cO(E)$ satisfies the UCT. 
\end{proposition}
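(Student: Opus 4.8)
The plan is to run the proofs of \cite[Proposition~6.3 and Proposition~6.6]{KaI} with essentially no change, the point being that those arguments use only the Pimsner-algebra machinery attached to the C*-correspondence $C_d(E^1)$ over $C_0(E^0)$, and never the non-degeneracy of this correspondence (equivalently, never that $\dom(r)=E^1$).

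For the separability statement, recall that $\cO(E)$ is the closed linear span of the elements $t^n(\xi)t^m(\eta)^*$ with $\xi\in C_d(E^n)$ and $\eta\in C_d(E^m)$, where $(t^0,t^1)$ is the universal Cuntz-Krieger $E$-pair and $C_d(E^n)=C_d(E^1)^{\otimes_{C_0(E^0)}n}$; since an internal tensor power of a separable correspondence over a separable algebra is again separable, $\cO(E)$ is separable as soon as $C_0(E^0)$ and $C_d(E^1)$ are. Conversely, $t^0$ embeds $C_0(E^0)$ and $t^1$ embeds $C_d(E^1)$ isometrically into $\cO(E)$ (using injectivity of $t^0$), so separability of $\cO(E)$ forces separability of $C_0(E^0)$ and of $C_d(E^1)$. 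It therefore suffices to show that $C_0(E^0)$ is separable iff $E^0$ is second countable, and that $C_d(E^1)$ is separable iff $E^1$ is second countable. The first is the classical fact about $C_0(X)$. For the second I would use that $C_c(E^1)\subseteq C_d(E^1)$, that $C_c(E^1)$ is dense in $C_d(E^1)$ for the Hilbert-module norm (\cite[Lemma~1.26]{KaI}), and that this module norm dominates the supremum norm on $C_c(E^1)$, since $\ip{\xi}{\xi}(v)=\sum_{e\in d^{-1}(v)}|\xi(e)|^2\geq\sup_{e\in d^{-1}(v)}|\xi(e)|^2$. Hence if $E^1$ is second countable, a countable supremum-dense subset of $C_c(E^1)$ (finite sums of bump functions, say) is module-dense in $C_d(E^1)$ by \cite[Lemma~1.26]{KaI}, so $C_d(E^1)$ is separable; and if $C_d(E^1)$ is separable then $C_c(E^1)$ is module-separable, a fortiori supremum-separable, so $C_0(E^1)$ is separable and $E^1$ is second countable.

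For the UCT, combine the nuclearity and separability of $\cO(E)$ just obtained with the fact that $\cO(E)$ lies in the bootstrap class, which is exactly where the Pimsner-algebra theory contributes (see \cite{KaPim}, \cite{KaPim2}): the Toeplitz algebra $\cT(E)$ of the correspondence $C_d(E^1)$ is $KK$-equivalent to its coefficient algebra $C_0(E^0)$, and $\cO(E)$ is a quotient of $\cT(E)$ whose kernel is Morita equivalent to an ideal of $C_0(E^0)$; since $C_0(E^0)$ is commutative and separable it lies in the bootstrap class, and the bootstrap class is closed under extensions, Morita equivalence and $KK$-equivalence, so $\cO(E)$ lies in it and satisfies the UCT. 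None of these steps requires $\dom(r)=E^1$.

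I expect the only genuinely delicate point to be the ``only if'' half of the separability statement: one must not confuse the Hilbert-module norm on $C_d(E^1)$ with the supremum norm on $C_0(E^1)$ — the two function spaces really differ when $d$ is not proper — and the argument rests precisely on the module norm dominating the supremum norm on $C_c(E^1)$ together with the density of $C_c(E^1)$ in $C_d(E^1)$.
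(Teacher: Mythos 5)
Your argument is correct and is exactly the route the paper intends: the paper gives no proof here, merely asserting that the arguments of \cite[Propositions~6.3 and 6.6]{KaI} carry over because they run entirely through the Pimsner-algebra machinery for the correspondence $C_d(E^1)$ over $C_0(E^0)$ and never invoke non-degeneracy (i.e.\ $\dom(r)=E^1$). Your write-up simply supplies the details of those arguments, including the one genuinely delicate point (the module norm dominating the supremum norm on $C_c(E^1)$, together with \cite[Lemma~1.26]{KaI}), so nothing further is needed.
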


\begin{proposition}[{see \cite[Corollary~6.10]{KaI}}]
For a partially defined topological graph $E=(E^0,E^1,d,r)$, 
we have the following exact sequence of $K$-groups: 
\[
\begin{CD}
K_0(C_0(\s{E}{0}{rg})) @>>\iota_*-[\pi_r]> K_0(C_0(E^0)) 
@>>t^0_*>  K_0(\cO(E)) \\
@AAA @. @VVV \\
K_1(\cO(E)) @<t^0_*<< K_1(C_0(E^0)) 
@<\iota_*-[\pi_r]<< K_1(C_0(\s{E}{0}{rg})).
\end{CD}
\]
\end{proposition}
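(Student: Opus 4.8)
The plan is to deduce this exact sequence from the general six-term $K$-theory sequence for Cuntz--Pimsner algebras, in exactly the manner in which \cite[Corollary~6.10]{KaI} is obtained; the only feature special to the present setting is that the left action $\pi_r$ of the $C^*$-correspondence $C_d(E^1)$ may be degenerate, but the Pimsner--Katsura machinery is valid for an arbitrary $C^*$-correspondence, so this is harmless.

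First I recall from \cite{KaPim} (see also \cite{KaPim2}) that for any $C^*$-correspondence $X$ over a $C^*$-algebra $A$, with left action $\varphi\colon A\to\cL(X)$ and Katsura ideal $J_X:=\varphi^{-1}(\cK(X))\cap(\ker\varphi)^\perp$, there is an exact sequence
\[
\begin{CD}
K_0(J_X) @>\iota_*-[\varphi]>> K_0(A) @>>> K_0(\cO_X) \\
@AAA @. @VVV \\
K_1(\cO_X) @<<< K_1(A) @<\iota_*-[\varphi]<< K_1(J_X),
\end{CD}
\]
in which $\iota\colon J_X\hookrightarrow A$ is the inclusion, $[\varphi]$ is the composite in $K$-theory of $\varphi|_{J_X}\colon J_X\to\cK(X)$ with the natural identification $K_*(\cK(X))\cong K_*(A)$ furnished by the Hilbert module $X$, and the maps $K_*(A)\to K_*(\cO_X)$ are induced by the canonical homomorphism $A\to\cO_X$ (which is $t^0$ in the case at hand). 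No non-degeneracy of $\varphi$ is needed here.

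It then remains to specialize to $A=C_0(E^0)$, $X=C_d(E^1)$, $\varphi=\pi_r$ and to verify $J_X=C_0(\s{E}{0}{rg})$; granting this, $[\varphi]=[\pi_r]$ and the displayed sequence becomes the asserted one. The computation of $\cK(C_d(E^1))$ and the identification $\pi_r^{-1}(\cK(C_d(E^1)))=C_0(\s{E}{0}{fin})$ involve only the right Hilbert module structure and the question of which multiplication operators $\pi_r(f)$ are compact; these go through word for word as in \cite[Section~1]{KaI}, interpreting $r^{-1}$ as the preimage under the partially defined map $r$ (equivalently, under $\tr$ intersected with $\dom(r)$, which on subsets of $E^0$ is the same). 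For $(\ker\varphi)^\perp$ one has $\ker\pi_r=C_0\big(E^0\setminus\overline{r(\dom(r))}\big)$, so that $(\ker\varphi)^\perp=C_0\big(E^0\setminus\overline{\s{E}{0}{sce}}\big)$ once $\s{E}{0}{sce}$ is defined as $E^0\setminus\overline{r(\dom(r))}$; intersecting the two ideals gives $J_X=C_0(\s{E}{0}{rg})$ with $\s{E}{0}{rg}=\s{E}{0}{fin}\setminus\overline{\s{E}{0}{sce}}$, and the proof is complete.

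The bulk of the work, namely transcribing the computations of $\cK(C_d(E^1))$ and of $\pi_r^{-1}(\cK(C_d(E^1)))$ from \cite{KaI}, is routine. The one point that genuinely requires attention is $(\ker\varphi)^\perp$: for an honest topological graph $\ker\pi_r$ is already nonzero whenever there are sources, so the argument is in place, but for a partially defined graph one must make sure that $\s{E}{0}{sce}$, and hence $\s{E}{0}{rg}$, are set up using $r(\dom(r))$ rather than $r(E^1)$, so that the formula $J_X=C_0(\s{E}{0}{rg})$ remains valid; with that in hand the exact sequence is a direct quotation of the Pimsner--Katsura result. (Alternatively, one could realize $E$ as a restriction of a topological graph $F$ as in Proposition~\ref{Prop:restrict} and combine Proposition~\ref{Prop:restrict}(iii) with the known sequence for $\cO(F)$, but the direct Pimsner argument is cleaner.)
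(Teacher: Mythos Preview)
Your proposal is correct and follows the same approach the paper indicates: the paper gives no proof of its own here but refers to \cite[Corollary~6.10]{KaI} (and \cite{KaPim2}) with the remark that the Pimsner-algebra machinery applies without change since it does not require non-degeneracy of the left action. Your explicit verification that $J_X=C_0(\s{E}{0}{rg})$ (with $\s{E}{0}{sce}$ read in terms of $r(\dom(r))$) is exactly the one point that needs attention when transcribing the argument, and you handle it correctly.
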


By \cite[Proposition~7.1]{KaII}, 
for a topological graph $E=(E^0,E^1,d,r)$, 
the C*-algebra $\cO(E)$ is unital if and only if $E^0$ is compact. 
When $r$ is partially defined, the situation becomes very complicated. 
We give four examples.
In these examples, 
Proposition~\ref{GIUT} is useful to determine the \CA s. 

\begin{example}\label{Ex1}
Consider a partially defined topological graph $E=(E^0,E^1,d,r)$ 
where $E^0=\{v,w\}$, $E^1=\{e,f\}$, $d(e)=v$, $d(f)=w$, $\dom(r)=\{e\}$ 
and $r(e)=w$. 
The C*-algebra $\cO(E)$ is generated by two orthogonal projections 
$p_v, p_w$ and two partial isometries $t_e, t_f$ such that 
\[
t_e^*t_e = p_v,\ t_f^*t_f = p_w,\ p_w = t_et_e^*,\ p_vt_f=p_wt_f=0. 
\]
Then one can check that 
$p_v+p_w+t_ft_f^*$ becomes the unit of $\cO(E)$. 
In fact, one can see that $\cO(E)$ is isomorphic to $M_3(\C)$. 
\end{example}

\begin{example}\label{Ex2}
Next consider a partially defined topological graph $E=(E^0,E^1,d,r)$ where $E^0=\{v,w\}$, 
$E^1=\{e_1,e_2,\ldots,e_k,\ldots \}$, $d(e_k)=v$ for $k=1,2,\ldots$, 
$\dom(r)=\{e_1\}$ and $r(e_1)=w$. 
The C*-algebra $\cO(E)$ is generated by two orthogonal projections 
$p_v, p_w$ and partial isometries $t_k$ for $k=1,2,\ldots$ such that 
\[
t_k^*t_k = p_v\ (k=1,2,\ldots),\ p_w = t_1t_1^*,\ 
p_vt_k=p_wt_k=0\ (k=2,3,\ldots). 
\]
One can check that $\cO(E)$ is isomorphic to the non-unital C*-algebra 
$\cK$ of compact operators on $\ell^2(\N)$. 
In fact, we get an isomorphism from $\cO(E)$ to $\cK$ sending 
$p_v$ and $p_w$ to the matrix units $w_{0,0}$ and $w_{1,1}$
for $(0,0)$ and $(1,1)$ positions respectively, 
and $t_k$ to the matrix units $w_{k,0}$ for $(k,0)$ positions. 
\end{example}

\begin{example}\label{Ex3}
Let us consider a partially defined topological graph $E=(E^0,E^1,d,r)$ 
where $E^0=[0,1)\cup (1,2)$ and $E^1=[0,1)$,
$d\colon E^1 \to E^0$ is the embedding, 
$\dom(r)=(0,1)$ and $r(s)=s+1$ for $s \in \dom(r)$. 

Let us define a $*$-homomorphism 
$t^0\colon C_0(E^0)\to C_0([0,1),M_2(\C))$ by 
\[
t^0(f)(s)=
\begin{pmatrix}
f(s) & \\
 & f(s+1) 
\end{pmatrix}
\]
for $s \in [0,1)$ and $f \in C_0(E^0)$. 

Let us also define a linear map 
$t^1 \colon C_d(E^1)\to C_0([0,1),M_2(\C))$ by 
\[
t^1(\xi)(s)=\begin{pmatrix}
0 & \\
\xi(s) & 0 
\end{pmatrix}
\]
for $s \in [0,1)$ and $\xi \in C_d(E^1) = C_0([0,1))$. 
One can check that the pair of maps $(t^0,t^1)$ induces an isomorphism 
$\cO(E) \to C_0([0,1),M_2(\C))$. 
Thus $\cO(E)$ is non-unital. 
\end{example}

\begin{example}\label{Ex4}
Let us consider a partially defined topological graph $E=(E^0,E^1,d,r)$ 
where $E^0=(-1,1)$ and $E^1=(-1/2,1/2)$ are open intervals,
$d\colon E^1 \to E^0$ is the embedding, 
$\dom(r)=E^1\setminus\{0\}$ and $r$ is defined by 
\[
r(s)=\begin{cases}
s+1 & \text{for $s \in (-1/2, 0)$,}\\
s-1 & \text{for $s \in (0, 1/2)$.}
\end{cases}
\]
Let us define a $*$-homomorphism 
$t^0\colon C_0(E^0)\to C([-1/2,1/2],M_2(\C))$ by 
\[
t^0(f)(s)=
\begin{cases}
\begin{pmatrix}
f(s) & \\
 & f(s+1) 
\end{pmatrix}& \text{for $s \in [-1/2, 0]$}\\
\begin{pmatrix}
f(s) & \\
 & f(s-1)
\end{pmatrix}& \text{for $s \in [0, 1/2]$}
\end{cases}
\]
for $f \in C_0(E^0) = C_0(-1,1)$. 

Let us also define a linear map 
$t^1 \colon C_d(E^1)\to C([-1/2,1/2],M_2(\C))$ by 
\[
t^1(\xi)(s)=\begin{pmatrix}
0 & \\
\xi(s) & 0 
\end{pmatrix}\qquad \text{for $s \in [-1/2, 1/2]$}
\]
for $\xi \in C_d(E^1) = C_0(-1/2,1/2)$. 
One can check that the pair of maps $(t^0,t^1)$ induces an injective 
$*$-homomorphism $\cO(E) \to C([-1/2,1/2],M_2(\C))$. 
The image of this $*$-homomorphism consists of 
$F \in C([-1/2,1/2],M_2(\C))$ such that 
\begin{align*}
F(-1/2) &= 
\begin{pmatrix}
a & \\
 & b \\
\end{pmatrix}&
F(1/2) &= 
\begin{pmatrix}
b & \\
 & a
\end{pmatrix}
\end{align*}
for some $a,b \in \C$. 
It is now easy to see that this image is unital. 
Hence $\cO(E)$ is unital. 
The unit is given for example by $t^0(f)+t^1(\xi)t^1(\xi)^*$ 
where $f \in C_0(E^0) = C_0(-1,1)$ and $\xi \in C_d(E^1) = C_0(-1/2,1/2)$ are 
defined as 
\[
f(s)= \begin{cases}
\sin^2(s\pi)& \text{for $s \in [-1, -1/2]$}\\
1& \text{for $s \in [-1/2, 1/2]$}\\
\sin^2(s\pi)& \text{for $s \in [1/2, 1]$}
\end{cases}
\qquad
\xi(s)= \cos(s\pi)\ \text{for $s \in [-1/2, 1/2]$.}
\]
\end{example}

The four examples above show that the unitality of $\cO(E)$ is 
independent of the compactness of $E^0$ 
when $r$ is partially defined. 
It is relevant to the unitality of $\cO(E)$ that 
the compactness of $\s{E}{0}{sg}$, the compactness of $E^1\setminus \dom(r)$ 
and the map $r$ itself. 
In the end of this section, 
we give equivalent conditions on $E$ for $\cO(E)$ to be unital. 

Next we try to find the groupoid model for $\cO(E)$. 
Take a partially defined topological graph $E=(E^0,E^1,d,r)$. 
By Lemma \ref{Lem:partialmap}, 
we get the continuous map $\tr$ from $E^1$ to 
the one-point compactification $\tE^0$ of $E^0$ 
such that $\tr^{-1}(E^0)=\dom(r)$ and $\tr|_{\dom(r)}=r$. 
Then the quadruple $\tE :=(\tE^0,E^1,d,\tr)$ becomes a topological 
graph. 
The point $\infty \in \tE^0$ can be regular or singular 
in this topological graph $\tE$. 
The set $\s{\tE}{0}{rg}$ of regular points in $\tE$ 
is either $\s{E}{0}{rg}$ or $\s{E}{0}{rg} \cup \{\infty\}$. 

We get the following. 

\begin{proposition}\label{Prop:unitize}
The evaluation map at $\infty \in \tE^0$ induces 
the unital $*$-homomorphism $\chi\colon \cO(\tE;\s{E}{0}{rg}) \to \C$ 
whose kernel is naturally isomorphic to $\cO(E)$. 
Therefore $\cO(\tE;\s{E}{0}{rg})$ is the unitization of $\cO(E)$. 
\end{proposition}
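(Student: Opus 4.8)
The plan is to realize $\cO(E)$ as the kernel of the evaluation at $\infty \in \tE^0$ on $\cO(\tE;\s{E}{0}{rg})$, along the same lines as Proposition~\ref{Prop:restrict} and using the gauge-invariant uniqueness theorem, Proposition~\ref{GIUT}. Throughout write $(t^0,t^1)$ for the universal Cuntz--Krieger $(\tE;\s{E}{0}{rg})$-pair generating $\cO(\tE;\s{E}{0}{rg})$. Since $\tE^0$ is compact, the constant function $1_{\tE^0}$ lies in $C_0(\tE^0)$ and $\pi_{\tr}(1_{\tE^0}) = \id$ on $C_d(E^1)$; hence $t^0(1_{\tE^0})$ is a unit for $\cO(\tE;\s{E}{0}{rg})$, so this algebra is unital.

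First I would construct $\chi$. Since $d(E^1) \subset E^0$, we have $d^{-1}(\infty) = \emptyset$, so $\ip{\xi}{\eta}(\infty) = 0$ for all $\xi,\eta \in C_d(E^1)$; also $\s{E}{0}{rg} \subset E^0$. Therefore the pair $(\varepsilon^0, \varepsilon^1)$, where $\varepsilon^0 \colon C_0(\tE^0) \ni f \mapsto f(\infty) \in \C$ and $\varepsilon^1 \colon C_d(E^1) \to \C$ is the zero map, is a Cuntz--Krieger $(\tE;\s{E}{0}{rg})$-pair: all of its defining relations reduce to $0 = 0$. It induces a \shom $\chi \colon \cO(\tE;\s{E}{0}{rg}) \to \C$, which is unital since $\chi(t^0(1_{\tE^0})) = 1$ and surjective since $\varepsilon^0$ is.

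Next I would identify the kernel. Let $B_0$ be the $C^*$-subalgebra of $\cO(\tE;\s{E}{0}{rg})$ generated by $t^0(C_0(E^0))$ and $t^1(C_d(E^1))$. From $C_0(\tE^0) = C_0(E^0) + \C 1_{\tE^0}$ and the fact that $t^0(1_{\tE^0})$ is the unit, one gets $\cO(\tE;\s{E}{0}{rg}) = B_0 + \C 1$, so $B_0$ is an ideal; since $B_0 \subset \ker\chi$ while $\chi(1) = 1$, we have $1 \notin B_0$, hence $\cO(\tE;\s{E}{0}{rg})/B_0 \cong \C$ and the induced map to $\C$ is an isomorphism, that is, $\ker\chi = B_0$. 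It remains to show $B_0 \cong \cO(E)$. Here the crucial point is that $\s{E}{0}{rg} = \s{\tE}{0}{rg} \cap E^0$ and that $\pi_{\tr}$, restricted to $C_0(E^0) \subset C_0(\tE^0)$, coincides with the left action $\varphi$ of the partially defined topological graph $E$ given by (iv) of Lemma~\ref{Lem:partialmap}; consequently the defining relations of a Cuntz--Krieger $E$-pair are exactly those satisfied by $(t^0|_{C_0(E^0)}, t^1)$, so this is a Cuntz--Krieger $E$-pair and, by its construction, induces a surjection $\cO(E) \to B_0$. That this surjection is an isomorphism follows from Proposition~\ref{GIUT}: the vertex map $t^0|_{C_0(E^0)}$ is injective because the universal vertex map $t^0$ of a relative topological graph \Ca is always injective, and $B_0$ carries the restriction of the gauge action of $\cO(\tE;\s{E}{0}{rg})$, which is a gauge action for $(t^0|_{C_0(E^0)}, t^1)$ because $B_0$ is generated by the fixed part $t^0(C_0(E^0))$ and the degree-one part $t^1(C_d(E^1))$. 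Therefore $\ker\chi = B_0 \cong \cO(E)$, and the exact sequence $0 \to \cO(E) \to \cO(\tE;\s{E}{0}{rg}) \xrightarrow{\ \chi\ } \C \to 0$ with $\cO(\tE;\s{E}{0}{rg})$ unital and equal to $\cO(E) + \C 1$ exhibits $\cO(\tE;\s{E}{0}{rg})$ as the unitization of $\cO(E)$.

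The step I expect to demand the most care is matching the two descriptions of Cuntz--Krieger pairs: checking $\s{E}{0}{rg} = \s{\tE}{0}{rg} \cap E^0$ so that the Cuntz--Krieger ideals agree, and $\varphi = \pi_{\tr}|_{C_0(E^0)}$ so that the Toeplitz relations agree, and locating the precise statement that the vertex map of a relative topological graph \Ca is always injective. The remaining points are routine: the construction of $\chi$ is immediate, the identity $\ker\chi = B_0$ is a two-line argument, and $\cO(E) \to B_0$ is surjective by its very definition.
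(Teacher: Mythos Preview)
Your proposal is correct and follows essentially the same route as the paper: construct $\chi$ from the pair $(\varepsilon^0,\varepsilon^1)$ using $\infty\notin\s{E}{0}{rg}$, identify $\ker\chi$ with the $C^*$-subalgebra generated by $t^0(C_0(E^0))$ and $t^1(C_d(E^1))$, and then show this subalgebra is $\cO(E)$ via the gauge-invariant uniqueness theorem. The paper's own proof is a two-line sketch (``it is routine to check that this kernel is isomorphic to $\cO(E)$''), and your argument supplies exactly the details that sketch elides; note that the relevant ideal match is automatic here since the relative algebra $\cO(\tE;\s{E}{0}{rg})$ is defined using the same open set $\s{E}{0}{rg}$ that serves as the regular set of $E$, so the equality $\s{E}{0}{rg}=\s{\tE}{0}{rg}\cap E^0$ (while true, as the paper records just before the proposition) is not strictly needed.
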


\begin{proof}
Since $\infty \notin \s{E}{0}{rg}$, 
one can define a unital $*$-homomorphism 
$\chi\colon \cO(\tE;\s{E}{0}{rg}) \to \C$ 
whose kernel is generated by $t^1(C_d(E^1))$ and $t_0(C_0(E^0))$ 
where $(t^0,t^1)$ is the universal Cuntz-Krieger $\tE$-pair. 
It is routine to check that this kernel is isomorphic to $\cO(E)$. 
\end{proof}

When $\infty \in \tE^0$ is singular in $\tE$, 
we have $\cO(\tE;\s{E}{0}{rg})=\cO(\tE)$, 
and hence $\cO(\tE)$ is the unitization of $\cO(E)$. 
In this case, Proposition~\ref{Prop:unitize} 
follows from Proposition~\ref{Prop:restrict} (iv). 
When $\infty \in \tE^0$ is regular in $\tE$, 
we have the following. 

\begin{proposition}\label{Prop:coincide}
When $\infty \in \tE^0$ is regular in $\tE$, 
the \Ca $\cO(E)$ is isomorphic to $\cO(\tE)$
\end{proposition}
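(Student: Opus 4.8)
The plan is to deduce the isomorphism at once from Proposition~\ref{Prop:restrict}, applied not to $E$ itself but to the genuine topological graph $\tE=(\tE^0,E^1,d,\tr)$. First I would record the two structural facts needed to invoke it: $E^0$ is an open subset of $\tE^0$ (indeed $E^0=\tE^0\setminus\{\infty\}$, and a locally compact space sits inside its one-point compactification as an open subset), and $d(E^1)\subseteq E^0$ because $d\colon E^1\to E^0$ is literally the same local homeomorphism whether viewed in $E$ or in $\tE$. Thus Proposition~\ref{Prop:restrict} applies with $F=\tE$ and with the open vertex set $E^0\subseteq\tE^0$.

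The one thing that must be checked — routine, but it is where the real content sits — is that the partially defined topological graph furnished by Proposition~\ref{Prop:restrict}(i) from these data is precisely our original $E$. By construction it is the quadruple $(E^0,E^1,d,\tr|_{\tr^{-1}(E^0)})$; since $\tr^{-1}(E^0)=\{e\in E^1:\tr(e)\neq\infty\}=\dom(r)$ and $\tr$ restricts to $r$ on $\dom(r)$, this is exactly $(E^0,E^1,d,r)=E$, and its associated \Ca is therefore $\cO(E)$. (Here one should also note that the left action $\varphi$ of $C_0(E^0)$ on $C_d(E^1)$ used to define $\cO(E)$ agrees with $\pi_{\tr}|_{C_0(E^0)}$, which is immediate from Lemma~\ref{Lem:partialmap}(iv).)

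It remains to verify the hypothesis $\s{\tE}{0}{sg}\subseteq E^0$ of Proposition~\ref{Prop:restrict}(iv). Since $\infty$ is assumed regular in $\tE$, we have $\infty\in\s{\tE}{0}{rg}$, hence $\infty\notin\s{\tE}{0}{sg}=\tE^0\setminus\s{\tE}{0}{rg}$, and so $\s{\tE}{0}{sg}\subseteq\tE^0\setminus\{\infty\}=E^0$. Proposition~\ref{Prop:restrict}(iv) then yields $\cO(E)\cong\cO(\tE)$, which is the assertion. Conceptually, regularity of $\infty$ is exactly the statement that no singular vertex of $\tE$ is discarded in passing to the open vertex set $E^0$, which is what makes the restriction harmless at the level of \CA s; so I do not expect any genuine obstacle — the only care required is the bookkeeping identification of the restricted graph with $E$.

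If one preferred an argument not citing Proposition~\ref{Prop:restrict}, one could instead restrict the universal Cuntz--Krieger $\tE$-pair $(\hat t^0,\hat t^1)$ to the pair $(s^0,s^1):=(\hat t^0|_{C_0(E^0)},\hat t^1)$ in $\cO(\tE)$, check it is a Cuntz--Krieger $E$-pair with $s^0$ injective, observe that the gauge action of $\cO(\tE)$ restricts, apply Proposition~\ref{GIUT} to get $\cO(E)\cong C^*(s^0,s^1)$, and then use regularity of $\infty$ — choosing $a_0\in C_0(\s{\tE}{0}{rg})$ with $a_0(\infty)=1$ and applying the Cuntz--Krieger relation at $\infty$ — to see that $\hat t^0(1_{\tE^0})$, hence all of $\hat t^0(C(\tE^0))$, lies in $C^*(s^0,s^1)$, so that $C^*(s^0,s^1)=\cO(\tE)$.
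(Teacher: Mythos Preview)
Your main argument is correct and is exactly the paper's primary proof: the paper simply writes ``This follows from Proposition~\ref{Prop:restrict}~(iv),'' and you have carefully unpacked the verifications (that the restricted graph is $E$ and that $\s{\tE}{0}{sg}\subseteq E^0$) needed to make that citation legitimate. The paper also supplies a second proof, via Proposition~\ref{Prop:unitize} and the quotient $\cO(\tE;\s{E}{0}{rg})\to\cO(\tE)$ with kernel $\C p$, which differs from your alternative GIUT-based sketch, but your main line already matches the paper.
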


\begin{proof}
This follows from Proposition~\ref{Prop:restrict} (iv). 
Here we give another proof using Proposition~\ref{Prop:unitize}. 

Let $(t^0,t^1)$ be the universal pair 
defining $\cO(\tE;\s{E}{0}{rg})$. 
Take $f \in C_0(\s{\tE}{0}{rg})$ with $f(\infty)=1$, 
and set $p=t^0(f)-\psi_{t^1}(\pi_r(f))$ 
where $\psi_{t^1} \colon \cK(C_d(E^1)) \to \cO(\tE;\s{E}{0}{rg})$ is 
the natural map defined using $t^1$. 
We note that $p$ does not depend on the choice of $f$. 

The kernel of the natural surjection 
$\cO(\tE;\s{E}{0}{rg}) \to \cO(\tE)$ is spanned by $p$. 
We are going to show that the restriction 
$\iota \colon \cO(E) \to \cO(\tE)$ of this surjection 
to $\cO(E) \subset \cO(\tE;\s{E}{0}{rg})$ 
is an isomorphism 
here $\cO(E)$ is naturally considered as a subalgebra 
of $\cO(\tE;\s{E}{0}{rg})$. 
Since $p \notin \cO(E)$, 
$\iota$ is injective. 
Since $\cO(E) + \C p =\cO(\tE;\s{E}{0}{rg})$, 
$\iota$ is surjective. 
We are done. 
\end{proof}

\begin{corollary}\label{Cor:unital}
When $\infty \in \tE^0$ is regular in $\tE$, 
the \Ca $\cO(E)$ is unital. 
\end{corollary}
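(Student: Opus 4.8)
The plan is to obtain this as an immediate consequence of Proposition~\ref{Prop:coincide} together with the known unitality criterion for \CA s of topological graphs. The key point is that $\tE=(\tE^0,E^1,d,\tr)$ is a genuine topological graph whose vertex space $\tE^0$ is, by construction, the one-point compactification of $E^0$ and hence compact. By \cite[Proposition~7.1]{KaII}, the \Ca of a topological graph is unital if and only if its vertex space is compact; applied to $\tE$ this yields that $\cO(\tE)$ is unital. When $\infty\in\tE^0$ is regular in $\tE$, Proposition~\ref{Prop:coincide} identifies $\cO(E)$ with $\cO(\tE)$, and therefore $\cO(E)$ is unital as well.

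I do not expect any genuine obstacle, since all the substantive work is already contained in Proposition~\ref{Prop:coincide} (where precisely the case that $\infty$ is regular in $\tE$ is analyzed) and in the cited characterization of unitality. If a self-contained argument were preferred, one could instead exhibit the unit of $\cO(E)$ by hand, in the spirit of the explicit unit constructed in Example~\ref{Ex4}: since the coefficient algebra $C(\tE^0)$ is unital, the relative algebra $\cO(\tE;\s{E}{0}{rg})$ is unital with unit $t^0(1_{\tE^0})$, and subtracting from it the generator $p=t^0(f)-\psi_{t^1}(\pi_r(f))$ of the kernel of $\cO(\tE;\s{E}{0}{rg})\to\cO(\tE)$ (for any $f\in C_0(\s{\tE}{0}{rg})$ with $f(\infty)=1$) produces an element lying in the copy of $\cO(E)$ inside $\cO(\tE;\s{E}{0}{rg})$ which acts as its unit. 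Routing the argument through Proposition~\ref{Prop:coincide} is cleaner, though, and reuses the computation already done there, so that is the route I would take.
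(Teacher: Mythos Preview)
Your proposal is correct and matches the paper's proof essentially verbatim: the paper also argues that $\tE^0$ is compact so $\cO(\tE)$ is unital, and then invokes Proposition~\ref{Prop:coincide} to conclude $\cO(E)\cong\cO(\tE)$ is unital. Your additional remarks about constructing the unit by hand are a nice elaboration but not needed for the argument.
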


\begin{proof}
Since $\tE$ is a topological graph such that $\tE^0$ is compact, 
$\cO(\tE)$ is unital. 
By Proposition~\ref{Prop:coincide}, 
when $\infty \in \tE^0$ is regular in $\tE$, 
the \Ca $\cO(E)$ is isomorphic to $\cO(\tE)$ 
and hence is unital. 
\end{proof}

It turns out that for a partially defined topologicla graph $E$ 
with $\dom (r) \neq E^1$, the converse of this corollary is true 
(Theorem~\ref{Thm:unital}). 
We can apply this proposition and this corollary to 
Example~\ref{Ex1} and Example~\ref{Ex4}. 

Using Proposition~\ref{Prop:unitize}, 
we can get a groupoid model for $\cO(E)$. 
By Corollary~\ref{Cor:gpoidEU}, 
the \Ca $\cO(\tE;\s{E}{0}{rg})$ is isomorphic to the \Ca 
$C^*(X,\sigma)$ of the SGDS $(X,\sigma)$ where 
$X = \tE_{\infty,\s{E}{0}{rg}}^0$, 
$\dom(\sigma)=r_{\infty,\s{E}{0}{rg}}(E_{\infty,\s{E}{0}{rg}}^1)$ 
and $\sigma = d_{\infty,\s{E}{0}{rg}} \circ r_{\infty,\s{E}{0}{rg}}^{-1}$. 
Recall that 
\[
X = \bigl\{\, (v,\, e_1, e_2, \ldots) 
\in \tE^0 \times \tE^1 \times \cdots 
\ \big|\ (v, e_1) \in \tE_\s{E}{0}{rg}^0, (e_i,e_{i+1}) \in 
\widetilde{\tE_\s{E}{0}{rg}^1}\bigr\}
\]
where 
\begin{align*}
\tE_\s{E}{0}{rg}^0 
&= \bigl\{(v\,,e)\in \tE^0\times \tE^1\ \big|\ 
\text{$v=\tr(e)$ if $e\in E^1$, 
$v\in \tE^0\setminus \s{E}{0}{rg}$ if $e=\infty$}\bigr\},\\
\widetilde{\tE_\s{E}{0}{rg}^1} 
&= \bigl\{(e',e)\in \tE^1 \times \tE^1\ \big|\ 
\text{$d(e')=\tr(e)$ if $e\in E^1$, 
$e'\in\tE^1\setminus d^{-1}(\s{E}{0}{rg})$ if $e=\infty$}\bigr\}. 
\end{align*}
Here note that a topological graph we consider now is 
$\tE=(\tE^0,E^1,d,\tr)$. 
Thus $\tE_\s{E}{0}{rg}^0$ is not $\widetilde{E_{\s{E}{0}{rg}}^0}$, but
$(\tE)_{\s{E}{0}{rg}}^0$. 
Note also that $\tE^0\setminus \s{E}{0}{rg} = \s{E}{0}{sg} \cup \{\infty\}$. 
Since $\tE^0$ is compact, the \Ca $\cO(\tE;\s{E}{0}{rg})$ is unital. 
Hence $X$ is compact. 
We define 
\[
Y := \bigl\{(v\,,e)\in \tE^0\times \tE^1\ \big|\ 
\text{$v=\tr(e)$ if $e\in E^1$, 
$v\in \s{E}{0}{sg}$ if $e=\infty$}\bigr\},
\]
$Z := \widetilde{\tE_\s{E}{0}{rg}^1}$ and 
\[
W :=\bigl\{\, (v,\, e_1, e_2, \ldots) 
\in \tE^0 \times \tE^1 \times \cdots 
\ \big|\ (v, e_1) \in Y,\ (e_i,e_{i+1}) \in Z \bigr\}. 
\]
Note that $X = W \cup \{(\infty, \infty, \ldots)\}$, 
and hence $X$ is the one-point compactification of $W$. 
Since the images of $d_{\infty,\s{E}{0}{rg}}$ and 
$r_{\infty,\s{E}{0}{rg}}$ are contained in $W$, 
we get an SGDS $(W,\sigma)$ in the same way as 
$(X,\sigma)$. 
Note that the \'etale groupoid $G(W,\sigma)$ is the 
restriction of $G(X,\sigma)$ to $W \subset X$. 

\begin{proposition}\label{Prop:Wsigma}
The \Ca $\cO(E)$ is isomorphic to the \Ca $C^*(W,\sigma)$
of SGDS $(W,\sigma)$. 
\end{proposition}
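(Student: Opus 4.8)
The plan is to deduce this from Proposition~\ref{Prop:unitize} together with Corollary~\ref{Cor:gpoidEU}, transporting the character $\chi$ to the groupoid side. By Proposition~\ref{Prop:unitize}, $\ker\chi\cong\cO(E)$, where $\chi\colon\cO(\tE;\s{E}{0}{rg})\to\C$ is the evaluation at $\infty\in\tE^0$; by Corollary~\ref{Cor:gpoidEU} there is an isomorphism $\cO(\tE;\s{E}{0}{rg})\cong C^*(X,\sigma)$. It therefore suffices to prove two things: that under this isomorphism $\chi$ corresponds to the restriction map $\rho\colon C^*(X,\sigma)\to\C$ attached to the closed invariant subset $\{(\infty,\infty,\ldots)\}=X\setminus W$ of the unit space, and that $\ker\rho=C^*(W,\sigma)$. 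Granting these, $\cO(E)\cong\ker\chi\cong\ker\rho=C^*(W,\sigma)$.

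First I would record that $\{(\infty,\infty,\ldots)\}$ really is a closed invariant subset of $X$: it lies in neither $\dom(\sigma)$ nor $\mathrm{ran}(\sigma)$, since, as noted just before the statement, the images of $d_{\infty,\s{E}{0}{rg}}$ and $r_{\infty,\s{E}{0}{rg}}$ are contained in $W$; hence its $\sigma$-orbit is trivial. Consequently $W$ is open and invariant, $G(W,\sigma)=G(X,\sigma)|_W$ (as already remarked), the reduction $G(X,\sigma)|_{\{(\infty,\infty,\ldots)\}}$ is the trivial one-point groupoid with C*-algebra $\C$, and there is an exact sequence
\[
0\longrightarrow C^*(W,\sigma)\longrightarrow C^*(X,\sigma)\stackrel{\rho}{\longrightarrow}\C\longrightarrow 0 .
\]
This is the standard ideal–quotient sequence associated with a closed invariant subset of the unit space of an \'etale groupoid; it holds here for the full groupoid C*-algebras because $C^*(X,\sigma)\cong\cO(\tE;\s{E}{0}{rg})$ is nuclear, so $G(X,\sigma)$ is amenable and full and reduced C*-algebras agree throughout.

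It remains to identify $\chi$ with $\rho$, which I would check on generators. The relative analogue of the isomorphisms in Section~\ref{Sec:C*alg} shows that $\cO(\tE;\s{E}{0}{rg})\cong C^*(X,\sigma)$ sends $t^0(f)$, for $f\in C_0(\tE^0)$, to the function $f\circ m_\infty^0\in C_0(X)\subset C^*(X,\sigma)$, where $m_\infty^0\colon X\to\tE^0$ is the $0$-component of the canonical factor map onto $\tE$, given by $(v,e_1,e_2,\ldots)\mapsto v$; and it sends $t^1(\xi)$, for $\xi\in C_d(E^1)$, into the ideal $C^*(W,\sigma)$, since its support in $G(X,\sigma)$ is contained in $G(W,\sigma)$ (the relevant source and range points lie in $\dom(\sigma)\cup\mathrm{ran}(\sigma)\subseteq W$). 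Because $m_\infty^0(\infty,\infty,\ldots)=\infty$, we get $\rho(f\circ m_\infty^0)=f(\infty)=\chi(t^0(f))$, while $\rho$ and $\chi$ both annihilate $t^1(\xi)$. Since $t^0(C_0(\tE^0))$ and $t^1(C_d(E^1))$ generate $\cO(\tE;\s{E}{0}{rg})$, the two \shoms agree; hence $\ker\chi$ is carried onto $\ker\rho=C^*(W,\sigma)$, and together with Proposition~\ref{Prop:unitize} this gives $\cO(E)\cong C^*(W,\sigma)$.

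The most delicate step will be this last identification: chasing the generators of the relative algebra $\cO(\tE;\s{E}{0}{rg})$ through the iterated dual-graph isomorphisms to pin down the image of $\chi$, and ensuring that the ideal–quotient sequence is used for the \emph{full} (not merely reduced) groupoid C*-algebras — the nuclearity of $\cO(\tE;\s{E}{0}{rg})$ recorded earlier is precisely what makes that step legitimate.
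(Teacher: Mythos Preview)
Your argument is correct and follows the same strategy as the paper's own proof: identify $\cO(E)$ with the kernel of the evaluation-at-$\infty$ map on $\cO(\tE;\s{E}{0}{rg})$, transport this to the groupoid side via the isomorphism with $C^*(X,\sigma)$, and recognise the resulting kernel as $C^*(W,\sigma)$. The paper compresses all of this into three sentences, merely asserting that the evaluation surjection on $C^*(X,\sigma)$ ``can be identified with'' the one in Proposition~\ref{Prop:unitize}; you have supplied the details the paper omits --- the invariance of $\{(\infty,\infty,\ldots)\}$, the ideal--quotient exact sequence for the \'etale groupoid, the check on generators that $\rho$ and $\chi$ agree, and the nuclearity/amenability point ensuring the sequence holds at the level of full groupoid C*-algebras.
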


\begin{proof}
The \Ca $C^*(W,\sigma)=C^*(G(W,\sigma))$ is the kernel of the surjection 
from $C^*(X,\sigma)=C^*(G(X,\sigma))$ to $\C$ obtained from 
evaluating at $(\infty, \infty, \ldots)$. 
Through the isomorphism between $C^*(X,\sigma)$ and $\cO(\tE;\s{E}{0}{rg})$, 
this surjection can be identified with the surjection 
in Proposition~\ref{Prop:unitize}. 
Therefore $C^*(W,\sigma)$ is isomorphic to $\cO(E)$. 
\end{proof}

By this proposition, the \Ca $\cO(E)$ has the 
groupoid model $G(W,\sigma)$. 
This groupoid $G(W,\sigma)$ can be described 
in terms of graph $E$ as in Section~\ref{Sec:groupoid}. 
In particular, the unit space $W$ is, as a set, 
identified with $\s{E}{*}{sg} \cup E^\infty$ 
as in Section~\ref{Sec:groupoid}. 
Thus the groupoid $G(W,\sigma)$ coincides with the one 
in \cite[Corollary~7.11]{CK} considered by Castro and Kang 
(see also \cite[Definition~7.5]{CK}). 

Finally, we get the following. 

\begin{theorem}\label{Thm:unital}
Let $E=(E^0,E^1,d,r)$ be a partially defined topological graph 
with $\dom(r) \neq E^1$. 
The following coonditions are equivalent:
\begin{enumerate}
\rom
\item $\cO(E)$ is unital,
\item $\infty$ is regular in $\tE$, 
\item the space $Y$ is compact,
\item the space $W$ is compact,
\item the point $(\infty, \infty, \ldots)$ is isolated in $X$. 
\end{enumerate}
\end{theorem}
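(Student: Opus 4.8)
The plan is to prove the chain (iv)$\Leftrightarrow$(v), (iii)$\Leftrightarrow$(iv), (ii)$\Leftrightarrow$(iii), together with (ii)$\Rightarrow$(i) and (i)$\Rightarrow$(iv); these close a cycle through all five conditions. Almost everything is point-set topology about the spaces $Y$, $W$ and $X$ introduced before the statement; the only genuinely \Ca input beyond results already established is the elementary fact that the groupoid \Ca of an \'etale groupoid is unital if and only if its unit space is compact (one direction is immediate, the other because $C_0$ of the unit space always contains an approximate unit, which must then converge in norm to the unit).

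The equivalence (iv)$\Leftrightarrow$(v) is immediate: $X = W \cup \{(\infty,\infty,\ldots)\}$ is the one-point compactification of $W$, so $W$ is compact precisely when it is closed in $X$, i.e.\ when the adjoined point is isolated. For (iii)$\Leftrightarrow$(iv) I would use the continuous surjection $\rho\colon W \to Y$, $(v,e_1,e_2,\ldots)\mapsto(v,e_1)$; surjectivity follows by extending a given $(v,e_1)\in Y$ to an infinite or eventually-$\infty$ string, using that a regular vertex lies in the image of $r|_{\dom(r)}$ exactly as for ordinary topological graphs. If $W$ is compact then $Y=\rho(W)$ is compact; conversely $W$ is a closed subspace of $Y\times\tE^1\times\tE^1\times\cdots$ (the constraints $(e_i,e_{i+1})\in Z$ are closed, since $Z$ is compact), so compactness of $Y$ forces compactness of $W$.

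The core step is (ii)$\Leftrightarrow$(iii). First one checks that $\tE_{\s{E}{0}{rg}}^0$ is compact: it is the degree-one space of the relative-dual construction applied to $\tE$ with respect to $\s{E}{0}{rg}$, and because $\tE^0$ is already compact the point adjoined to it in that construction stays isolated, so $\tE_{\s{E}{0}{rg}}^0$ is a closed subspace of its (compact) one-point compactification. Since $\tE_{\s{E}{0}{rg}}^0 = Y\cup\{(\infty,\infty)\}$ with $Y$ open, $Y$ is compact iff $(\infty,\infty)$ is isolated in $\tE_{\s{E}{0}{rg}}^0$. Inspecting basic neighbourhoods $V\times N$ of $(\infty,\infty)$ in $\tE^0\times\tE^1$, one finds that $(\infty,\infty)$ is isolated iff both (a) $\infty\in\s{\tE}{0}{fin}$ (some neighbourhood $V$ of $\infty$ has $\tr^{-1}(V)$ relatively compact) and (b) $\infty\notin\overline{\s{E}{0}{sg}}$ (closure in $\tE^0$). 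To identify this with ``$\infty$ is regular in $\tE$'', i.e.\ $\infty\in\s{\tE}{0}{fin}\setminus\overline{\s{\tE}{0}{sce}}$, I would record the routine identities $\s{\tE}{0}{sce}=\s{E}{0}{sce}$ and $\s{\tE}{0}{inf}\cap E^0=\s{E}{0}{inf}$, combine them with the (partially-defined analogue of the) identity $\s{E}{0}{sg}=\s{E}{0}{inf}\cup\overline{\s{E}{0}{sce}}$, and note that the hypothesis $\dom(r)\neq E^1$ is exactly what gives $\infty\in\tr(E^1)$, hence $\infty\notin\s{\tE}{0}{sce}$, which is what makes (b) equivalent to $\infty\notin\overline{\s{\tE}{0}{sce}}$ once $\infty\in\s{\tE}{0}{fin}$ is assumed. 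Carrying out this neighbourhood analysis cleanly while keeping the several different points named $\infty$ apart is the part I expect to be most delicate.

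Finally, (ii)$\Rightarrow$(i) is Corollary~\ref{Cor:unital}, and for (i)$\Rightarrow$(iv) I would invoke Proposition~\ref{Prop:Wsigma}: $\cO(E)\cong C^*(W,\sigma)=C^*(G(W,\sigma))$, and a unital groupoid \Ca has compact unit space, so $W$ is compact. This closes the cycle (i)$\Rightarrow$(iv)$\Rightarrow$(iii)$\Rightarrow$(ii)$\Rightarrow$(i) and finishes the proof.
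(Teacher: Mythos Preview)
Your proof is correct and follows the same overall skeleton as the paper: the operator-algebraic inputs are identical (Proposition~\ref{Prop:Wsigma} together with the compact-unit-space criterion for (i)$\Leftrightarrow$(iv), and Corollary~\ref{Cor:unital} for (ii)$\Rightarrow$(i)), and (iv)$\Leftrightarrow$(v) and (iv)$\Rightarrow$(iii) are handled the same way. You additionally supply the directions (iii)$\Rightarrow$(iv) and (ii)$\Rightarrow$(iii), which the paper omits because the cycle already closes without them; these are correct but redundant.

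The one substantive difference is in (iii)$\Rightarrow$(ii). The paper argues more concretely: from compactness of $Y$ it reads off that $\s{E}{0}{sg}$ is compact (as the closed slice $\{(v,\infty)\}$ of $Y$), chooses a compact neighbourhood $C$ of $\infty$ in $\tE^0$ with $C\cap\s{E}{0}{sg}=\emptyset$, observes that $\tr^{-1}(C)$ is compact (again a closed slice of $Y$), checks $\tr(\tr^{-1}(C))=C$ using $\dom(r)\neq E^1$ and $C\setminus\{\infty\}\subset\s{E}{0}{rg}$, and then invokes \cite[Proposition~2.8]{KaI} to conclude $\infty\in\s{\tE}{0}{rg}$. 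Your route---recasting ``$Y$ compact'' as ``$(\infty,\infty)$ isolated in $\tE_{\s{E}{0}{rg}}^0$'' and then unwinding the definition of regularity via the identities $\s{\tE}{0}{sce}=\s{E}{0}{sce}$ and $\s{\tE}{0}{inf}\cap E^0=\s{E}{0}{inf}$---is more self-contained (no external citation) but correspondingly longer, and the bookkeeping with the several points named $\infty$ is exactly where care is needed, as you note. Both arguments are valid; the paper's is shorter because it outsources the characterization of regularity to \cite{KaI}.
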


\begin{proof}
A \Ca $C^*(G)$ of an \'etale groupoid $G$ is unital 
if and only if the unit space of $G$ is compact. 
Hence (i) is equivalent to (iv) by Proposition~\ref{Prop:Wsigma}. 
It is clear that (iv) is equivalent to (v). 
Since there exists a surjection $(v,\, e_1, e_2, \ldots) \mapsto (v,e_1)$ 
from $W$ to $Y$, (iv) implies (iii). 

Assume (iii), and we show (ii). 
Since $\{(v,\infty)\in Y\mid v \in \s{E}{0}{sg}\}$ is closed in $Y$, 
$\s{E}{0}{sg}$ is compact. 
Hence we have a compact neighbourhood $C$ of $\infty \in \tE^0$ 
such that $C \cap \s{E}{0}{sg} = \emptyset$. 
Since 
\[
\{(v,e)\in Y\mid v \in C\}
=\{(v,e)\in \tE^0\times E^1\mid v=\tr(e) \in C\}
\]
is closed in $Y$, 
$\tr^{-1}(C)$ is compact. 
Since $\tr^{-1}(\infty)=E^1 \setminus \dom(r) \neq \emptyset$ 
and $C \setminus \{\infty\} \subset \s{E}{0}{rg}$, 
we have $\tr(\tr^{-1}(C))=C$. 
Therefore $\infty$ is regular in $\tE$ by \cite[Proposition~2.8]{KaI}. 
This shows that (iii) implies (ii). 
Finally by Corollary~\ref{Cor:unital}, (ii) implies (i). 
\end{proof}

By (iii) of the proposition above, 
the compactness of $\s{E}{0}{sg}$ and $E^1\setminus \dom(r)$ is 
a necessary condition for $\cO(E)$ to be unital.
This is not a sufficient condition as the following example shows. 

\begin{example}
Consider a partially defined topological graph $E=(E^0,E^1,d,r)$ 
where $E^0=\{v_0,v_1,v_2,\ldots\}$, 
$E^1=\{e_1,e_2,\ldots \}$, $d(e_k)=v_k$ for $k=1,2,\ldots$, 
$\dom(r)=\{e_2,e_3,\ldots \}$ and $r(e_k)=v_{k-1}$ for $k=1,2,\ldots$. 
The C*-algebra $\cO(E)$ is generated by orthogonal projections 
$p_k$ and partial isometries $t_k$ for $k=1,2,\ldots$ such that 
\[
t_k^*t_k = p_k\ (k=1,2,\ldots),\ p_{k-1} = t_kt_k^*\ (k=2,3,\ldots),\ 
p_kt_1=0\ (k=0,1,\ldots). 
\]
Then $\s{E}{0}{sg}=\{v_0\}$ and $E^1\setminus \dom(r)=\{e_1\}$ are compact. 
On the other hand, 
$\cO(E)$ is isomorphic to the non-unital C*-algebra $\C \oplus \cK$. 
In fact, we get an isomorphism from $\cO(E)$ to $\C \oplus \cK$ sending 
$p_0$ to $(1,0)$, $p_k$ to $(0,w_{k,k})$ for $k=1,2,\ldots$ 
and $t_k$ to $(0,w_{k-1,k})$ for $k=1,2,\ldots$ 
\end{example}

\end{document}